\newtheorem{theorem}{Theorem}[section]
\theoremstyle{definition}
\newtheorem{lemma}[theorem]{Lemma}
\newtheorem{definition}[theorem]{Definition}
\newtheorem{remark}[theorem]{Remark}
\newtheorem{conjecture}[theorem]{Conjecture}
\def\BN{\mathbb N}
\def\BZ{\mathbb Z}
\def\BQ{\mathbb Q}
\def\BR{\mathbb R}
\def\BC{\mathbb C}
\def\calA{\mathcal A}
\def\calO{\mathcal O}
\def\calT{\mathcal T}
\def\a{\alpha}
\def\ga{\gamma}
\def\la{\langle}
\def\ra{\rangle}
\def\e{\epsilon}
\def\d{\delta}
\def\th{\theta}
\def\longto{\longrightarrow}
\def\SL{\mathrm{SL}}
\def\z{\zeta}
\def\coeff{\mathrm{coeff}}
\def\be{  \begin{equation} }
\def\ee{  \end{equation} }
\def\diag{\text{diag}}
\def\hb{\hbar}
\def\Av{\mathrm{Av}}
\def\den{\mathrm{den}}
\def\z{\zeta}
\newcommand{\ol}{\overline}
\newcommand{\CZ}{\mathcal{Z}}
\newcommand{\CH}{\mathcal{H}}
\newcommand{\C}{\mathbb{C}}
\newcommand{\Z}{\mathbb{Z}}
\newcommand{\Li}{{\rm Li}}
\newcommand{\ie}{\emph{i.e.}}
\newcommand{\mb}{\mathbf}
\begin{document}


\title[Quantum modularity and complex Chern-Simons theory]{
Quantum modularity and complex Chern-Simons theory}
\author{Tudor Dimofte}
\address{Perimeter Institute for Theoretical Physics, 31 Caroline St N, Waterloo, ON N2J 2Y5, Canada; and \newline\indent
         School of Natural Sciences, Institute for Advanced Study, Princeton, NJ 08540, USA; \newline
         \indent on leave from Dept. of Mathematics, University of California, Davis, CA 95616, USA
        }
\email{tudor@math.ucdavis.edu}
\author{Stavros Garoufalidis}
\address{School of Mathematics \\
         Georgia Institute of Technology \\
         Atlanta, GA 30332-0160, USA \newline
         {\tt \url{http://www.math.gatech.edu/~stavros }}}
\email{stavros@math.gatech.edu}
\thanks{
{\em Key words and phrases: knots, quantum modular forms, quantum modularity
conjecture, cusped hyperbolic manifolds, volume, complex Chern-Simons theory, 
Kashaev invariant, gluing equations, Neumann-Zagier equations, Neumann-Zagier 
datum, Nahm series, hyperbolic geometry, ideal triangulations, 1-loop, 
torsion, quantum dilogarithm, state-integral, perturbation theory, 
Feynman diagrams, formal Gaussian integration, cyclic quantum dilogarithm,
number field.
}
}

\date{November 17, 2015}

\begin{abstract}
The Quantum Modularity Conjecture of Zagier predicts the existence of a formal
power series with arithmetically interesting coefficients that appears in
the asymptotics of the Kashaev invariant at each root of unity. 
Our goal is to construct a power series from a Neumann-Zagier datum
(i.e.,  an ideal triangulation of the knot complement and a geometric 
solution to the gluing equations) and a complex root of unity $\zeta$.
We prove that the coefficients of our series lie in the trace field
of the knot, adjoined a complex root of unity.
We conjecture that our series are those that appear in the 
Quantum Modularity Conjecture and confirm that they match the numerical
asymptotics of the Kashaev invariant (at various roots of unity)
computed by Zagier and the first author. Our construction is motivated 
by the analysis of singular limits in Chern-Simons theory 
with gauge group $SL(2,\C)$ at fixed level $k$, where $\zeta^k=1$.
\end{abstract}

\maketitle

\tableofcontents

\section{Introduction}
\label{sec.intro}

\subsection{Quantum modular forms}
\label{sub.QMF}

Quantum modular forms are fascinating objects introduced by 
Zagier~\cite{Za:QMF}. In the simplest formulation, a quantum modular form is a 
complex valued function $f$ on the set of complex roots of 
unity that comes equipped with a suitable formal power series expansion  
$\phi_\zeta(\hbar) \in \BC[\![\hbar]\!]$ at each complex root of unity $\zeta$.
Usually $f$ is given explicitly.
On the other hand, the power series $\phi_\zeta$,
although uniquely determined by $f$, are not easy to obtain.

One of the most interesting examples of quantum modular forms is conjectured
to be the logarithm of the Kashaev invariant of a knot. This 
is Zagier's Quantum Modularity Conjecture~\cite{Za:QMF}. 
Evidence for this conjecture includes ample numerical computations performed by 
Zagier and the first author~\cite{GZ1} as well as a proof in the case of the
$4_1$ knot~\cite{GZ1}.

Our goal is to construct an explicit formula for 
the power series that appear in the Quantum Modularity Conjecture of a knot.
We highlight some features of our results. 

\noindent
\rm{(a)}
The formulas for our series $\phi_{\gamma,\zeta}(\hbar)$ use as input 
a complex root of unity $\zeta$ and Neumann-Zagier datum $\gamma$, 
i.e., an ideal triangulation of a knot complement and a geometric solution 
to the gluing equations. Such a datum is readily available from {\tt SnapPy}.

\noindent
\rm{(b)}
Our series have arithmetically interesting coefficients;
see Theorems~\ref{thm.1loop} and~\ref{thm.1}.

\noindent
\rm{(c)}
Our series lead to exact computations that match the numerically computed
asymptotics of the Kashaev invariant at various roots of unity. The details
of our computations are given in Section~\ref{sec.compute}. 

\noindent
\rm{(d)}
Our construction is motivated by the analysis of singular limits in 
complex Chern-Simons theory, with gauge group $SL(2,\C)$.
Complex Chern-Simons theory depends on two coupling constants or 
{\em levels} $(k,\sigma)\in (\Z,\C)$ \cite{Witten-cx},
and the series $\phi_{\gamma,\zeta}(\hbar)$ at $\zeta=e^{2\pi i/k}$ is obtained 
by sending  $\sigma\to k$ (while keeping $k$ fixed). We describe this limit 
in greater detail in Section \ref{sec.CCS}, after introducing the definition 
of $\phi_{\gamma,\zeta}(\hbar)$ and proving its number-theoretic properties in 
Sections \ref{sec.def}-\ref{sec.proofs}.

\subsection{Zagier's Quantum Modularity Conjecture}
\label{sub.QMC}

The Kashaev invariant of a knot $K$ is a sequence of complex
numbers $\la K \ra_N$ indexed by a positive integer $N$~\cite{Kashaev95}. 
Murakami-Murakami~\cite{MM} identified the Kashaev invariant with an  
evaluation of the colored Jones polynomial $J_{K,N}(q) \in \BZ[q^{\pm 1}]$ 
colored by the $N$-dimensional irreducible
representation of $\mathfrak{sl}_2$:
$$
\la K \ra_N = J_{K,N}(e^{\frac{2 \pi i}{N}}) \,.
$$
Identifying the set of complex roots of unity with $\BQ/\BZ$, 
the Kashaev invariant can be extended to a translation-invariant function 
on $\BQ$ by
\be
J^0_K: \BQ \longto \BC, \qquad 
\a \mapsto J^0_K(\a)=J_{K,\den(\a)}(e^{2 \pi i \a})
\ee
where $\den(a/c)=c$ when $c>0$ and $a,c$ are coprime. Obviously, we have
$J^0_K(\a)=J^0_K(\a+1)$ for $\a \in \BQ$.

The Quantum Modularity Conjecture~\cite{Za:QMF} predicts for each 
complex root of unity $\zeta$ the existence of a formal power series 
$\phi_{K,\zeta}(\hbar) \in \BC[\![\hbar]\!]$ with the following property. 
Choose any $\begin{pmatrix} a & b \\ c & d \end{pmatrix} \in \SL(2,\BZ)$ 
with $c>0$ such that $\zeta = \exp(\frac{2\pi i a}{c})$. Let $X\to\infty$ 
in a fixed coset of $\BQ/\BZ$, and set $\hbar=2\pi i/(cX+d)$.
Then there is an asymptotic expansion
\be
\label{eq.QMF}
J^0_K\left(\frac{aX+b}{cX+d}\right) \sim J^0_K(X) 
\big(\tfrac{2\pi i}{\hbar}\big)^{3/2} e^{V/(c\hbar)}
\phi_{K,\zeta}(\hbar)
\ee
where $V$ is the complex volume of $K$. Furthermore, the coefficients of 
$\phi_{K,\zeta}(\hbar)$ 
are conjectured to be algebraic integers of the
following form:
\begin{itemize}
\item[(a)]
The coefficients of 
$\phi^+_{K,\zeta}(\hbar)=\phi_{K,\zeta}(\hbar)/\phi_{K,\zeta}(0)$ 
should be elements of the field 
$F_K(\zeta)$, where $F_K$ is the trace field of $K$. 
\item[(b)]
The constant term (under some mild assumptions on $F_K(\zeta)$)
should factor as follows:
\be
\label{eq.phi0factor}
\phi_{K,k}(0) = \phi_{K,1}(0) \sqrt[k]{\varepsilon_K} \beta_{K,k}
\ee
where $\phi_{K,1}(0)^2 \in F_K(\zeta)$, $\varepsilon_K$ is a unit (i.e.,
an algebraic integer of norm $\pm 1$)
in $F_K(\zeta)$ that depends only on the element of the Bloch group
of $K$ (and $\zeta$), and $\beta_{K,k} \in F_K(\zeta)$. 
\end{itemize}
The above unit is studied in~\cite{calegari}. For a detailed 
discussion of the Quantum Modularity Conjecture, we refer the reader 
to~\cite{Za:QMF} and also~\cite{GZ1} where a proof for the case of 
the $4_1$ knot (the simplest hyperbolic knot) is given. 

The Quantum Modularity Conjecture includes the Volume Conjecture of Kashaev
~\cite{Kashaev95} and its refinement to all orders of $1/N$ conjectured by
Gukov~\cite{Gu} and by the second author~\cite{Ga:arithmetic}. Indeed,
when $\left(\begin{smallmatrix} a & b \\ c & d \end{smallmatrix}\right)  
= \left(\begin{smallmatrix} 0 & -1 \\ 1 & 0\end{smallmatrix}\right)$
and $X=N\in \BN$, we obtain that
\be
\label{eq.GVC}
\overline{\la K \ra_N} = J^0_K\left(-\frac{1}{N}\right) \sim 
N^{3/2} e^{N V/(2 \pi i)}\phi_{K,1}\left(\frac{2 \pi i}{N}\right)\,.
\ee

The second author and Zagier numerically computed the
Kashaev invariant and its asymptotics, exposing several coefficients of 
the series $\phi_{K,\zeta}(\hbar)$ for many knots, and giving numerical 
confirmation of the Modularity Conjecture. The results are summarized 
in~\cite{GZ1}.

A definition of the power series $\phi_{K,\zeta}(\hbar)$ in the Quantum 
Modularity Conjecture was missing, though. Motivated by this problem, 
in an earlier publication
~\cite{DG} (inspired by \cite{DGLZ}) the authors assigned to a 
Neumann-Zagier datum $\gamma$
(i.e., to an ideal triangulation of the knot complement, a geometric 
solution to the gluing equations and a flattening) a power series 
$\phi_{\gamma,1}(\hbar)$
and conjectured that it coincides with the series $\phi_{K,1}(\hbar)$. 
One advantage of the series $\phi_{\gamma,1}(\hbar)$ 
is the exact computation of its coefficients using standard {\tt SnapPy}
methods~\cite{snappy}, together with 
 finite sums of Feynman diagrams. 
In all cases we matched those coefficients 
with the numerically computed values of~\cite{GZ1}. In~\cite{DG}, it 
was shown that the constant term of 
$\phi_{\gamma,1}(\hbar)$ is a topological invariant, but to date the 
full topological invariance of $\phi_{\gamma,1}(\hbar)$ is unknown.

Finally, we ought to point out a close connection between our series
$\phi_{\gamma,\zeta}(\hbar)$ and 
\begin{itemize}
\item
The radial asymptotics of Nahm sums at
complex roots of unity. This connection was observed at $\zeta=1$ during
conversations of the second author and Zagier in Bonn in the spring of 2012.
\item
The evaluations of one-dimensional state-integrals at rational
points \cite{GK1,GK2}. 
\item
The formula for an algebraic unit attached to an element of the Bloch 
group of a number field and a complex root of unity, 
appearing in \eqref{eq.phi0factor} \cite{calegari}.
\end{itemize}
These connections are not a coincidence; rather, they close a circle of ideas
motivated by several years of work on asymptotics of 
hypergeometric sums, quantum invariants, and their geometry and physics.


\section{The definition of $\phi_{\gamma,\zeta}$}
\label{sec.def}

\subsection{Ideal triangulations and Neumann-Zagier data}
\label{sub.NZ}

Ideal triangulations were introduced by Thurston as an efficient way to
describe (algebraically, or numerically) 3-dimensional hyperbolic
manifolds. For a leisure introduction, the reader may consult Thurston's
original notes~\cite{Th}, the exposition of Neumann--Zagier~\cite{NZ}
and Weeks~\cite{Weeks} and the documentation of {\tt SnapPy}~\cite{snappy}.
The shape of a 3-dimensional hyperbolic tetrahedra is a complex number
$z \in\BC\setminus\{0,1\}$. Letting $z' = (1-z)^{-1}$ and $z''=1-z^{-1}$,
the edges of an oriented ideal tetrahedron 
of shape $z$ can be assigned complex numbers 
according to Figure~\ref{fig.tet}.

\begin{figure}[htb]
\includegraphics[width=2in]{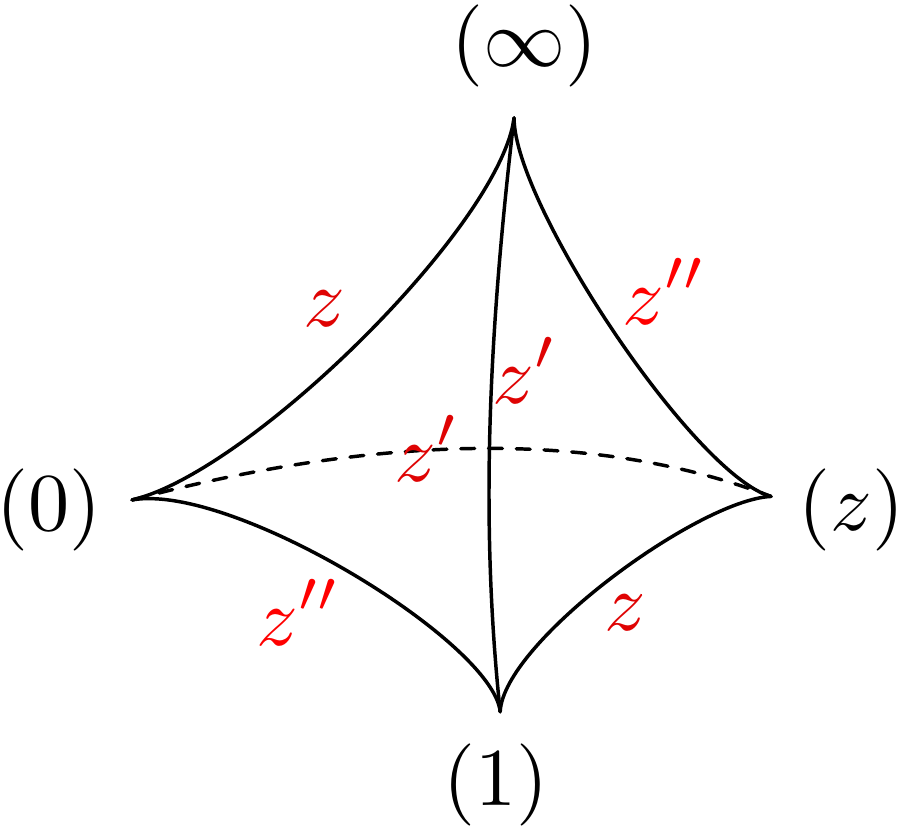}
\caption{An ideal tetrahedron with a shape assignement.}
\label{fig.tet}
\end{figure}

Let $M$ be an oriented hyperbolic manifold with one cusp (for instance
a hyperbolic knot complement) and $\calT$ an ideal triangulation of $M$ 
containing $N$ tetrahedra. In~\cite{DG} the authors introduced a 
Neumann-Zagier datum of $\calT$. The latter is a tuple
$\ga=(\mb A, \mb B,\nu,z,f,f'')$ that consists of:
\begin{itemize}
\item[(a)] Two matrices $\mb A, \mb B\in \mathrm{GL}(N,\Z)$ and a vector 
$\nu\in \Z^N$ encoding the coefficients of Thurston's gluing equations for 
the triangulation ($N-1$ independent equations imposing trivial holonomy 
around edges, and one equation imposing parabolic holonomy around the cusp).
\item[(b)] An $N$-tuple $z=(z_1,...,z_N)\in \C\backslash\{0,1\}$ of shape 
parameters, with each $z_i$ parametrizing the shape of the $i$-th 
tetrahedron, satisfying the gluing equations in the form 
$z^{\mb A}z''{}^{\mb B} = (-1)^\nu$, \ie\
\be 
\prod_i z_i^{A_{ji}}(1-z_i^{-1})^{B_{ji}} = (-1)^{\nu_j} \quad 
\text{for all} \quad j=1,\ldots, N\,.
\ee
\item[(c)] Two $N$-tuples $f,f''\in \Z^N$ satisfying
\be \mb A f + \mb B f'' = \nu\,. \ee
These provide a combinatorial flattening in the sense of 
\cite{Neumann-combi}. The integers $f,f''$, and $f'=1-f-f''$ also label 
edges of tetrahedra, with the property that the sum around any edge of the 
triangulation is $2$.
\end{itemize}
%
The Neumann-Zagier datum depends not just on the triangulation $\calT$ but 
also on which edges of each tetrahedron are labelled by the distinguished 
shape parameter $z_i$; this $3^N$-fold choice has been called a choice of 
``quad'' or ``gauge.''.

Neumann and Zagier~\cite{NZ} proved that $(\mb A\;\mb B)$ forms the top 
half of a symplectic matrix, \ie\ that $\mb A\mb B^T$ is symmetric and
$(\mb A\;\mb B)$ has full rank. It follows that if $\mb B$ is invertible, 
then $\mb B^{-1}\mb A$ is symmetric. 
We will call a Neumann-Zagier datum $\BZ$-\emph{nondegeretate} if $\mb B$ is 
invertible over the integers.

Fix a positive integer $k$. If $\ga$
is a Neumann-Zagier datum, let
\be 
\label{eq.zeta.theta}
\zeta = e^{\frac{2\pi i}{k}}\,,\qquad \theta_i = z_i^{1/k}\,,
\ee
where $\theta_i$ are chosen so that $\theta_i^k=z_i$. 
This defines number fields 
$F = \BQ(z_1,...,z_N)$, $F_k = F(\zeta)$, and $F_{G,k} 
=F_k(\theta_1,...,\theta_N)$, such that
\be 
F\subset F_k\subset F_{G,k}\,.
\ee
Observe that $F_{G,k}/F_k$ is the abelian Galois (Kummer) extension 
with group $G=(\BZ/k\BZ)^N=\la \sigma_1,\dots, \sigma_N \ra$ where 
\be
\label{eq.sigma}
\sigma_j(\th_i)=\zeta^{-\delta_{i,j}} \th_i 
\ee
where $\delta_{i,j}$ is Kronecker's delta function.
For the basic properties of Kummer theory, see~\cite[Sec.VI.8]{Lang}.

Below, we will construct a series $\phi_{\gamma,\zeta}(\hbar)$ 
for the $k$-th root of unity $\zeta = \exp(2\pi i/k)$. Then, after proving 
in Theorems \ref{thm.1loop} and  \ref{thm.1} that the coefficients in this 
series belong to $F_k$, the series $\phi_{\gamma,\zeta^p}(\hbar)$ for any other 
$k$-th root of unity $\zeta^p = \exp(2\pi ip/k)$ can be obtained from 
$\phi_{\gamma,\zeta}(\hbar)$ by a Galois automorphism.

\subsection{The $1$-loop invariant at level $k$}

Fix a $\BZ$-non-degenerate Neumann-Zagier datum $\gamma$ 
and a positive integer $k$. We use the notation of the 
previous section. For $m\in (\BZ/k\BZ)^N$, we define 
\be
\label{eq.amth}
a_m(\theta) = e^{-i\pi\, m\cdot\mb B^{-1}\mb A m} 
\zeta^{\frac12 \big[m\cdot \mb B^{-1}\mb A m+ m\cdot \mb B^{-1}\nu\big]}
\prod_{i=1}^N
\frac{\th_i^{-(\mb B^{-1} \mb A m)_i}}{
(\zeta \th_i^{-1};\zeta )_{m_i}}\,.
\ee
We also recall the {\em cyclic quantum dilogarithm} defined by
\be
D_k(x)=\prod_{s=1}^{k-1}(1-\zeta^{s}x)^s \qquad
D^*_k(x)=\prod_{s=1}^{k-1}(1-\zeta^{-s}x)^s \,.
\ee
This function appears in~\cite[Eqn.C.3]{Kashaev:star} and 
\cite[Eqn.2.30]{Kashaev:quantum.hyper}. 

\begin{definition}
\label{def.1loop}
With the above assumptions, the level $k$ 1-loop invariant of $\ga$ is
\be
\label{eq.tau}
\tau_{\gamma,k} :=  \frac{1}{k^{N/2}\sqrt{\det(\mb A\Delta_z''
+\mb B\Delta_z^{-1}) z^{f''/k}z''^{-f/k}}} 
\prod_{i=1}^N D_k^*(\th_i^{-1})^{1/k} \sum_{m\in (\Z/k\Z)^N} a_m(\th)\,,
\ee
where $\Delta_z = \diag(z_1,...,z_N)$ and $\Delta_{z''}=\diag(z_1'',...,z_N'')$ 
are diagonal matrices.
\end{definition}
Note that $\tau_{\gamma,k}$ depends on the Neumann-Zagier datum $\ga$,
the $k$-th root of unity $\zeta$ but also on the choice of $k$-th roots 
$\theta_i$ of $z_i$. The next theorem implies that 
$\tau_{\gamma,k}^{2k}$ depends only on $\ga$ and $\zeta$,
and therefore that $\tau_{\gamma,k}$ is well defined modulo multiplication 
by a $2k$-th root of unity.
The proof (given in Section \ref{sec.proofs}) follows from results of 
Zagier and the second author~\cite{GZ1} via a 
a comparison of an arithmetic to a geometric mean over the 
Galois group of $F_{G,k}/F_k$, reminiscent of Hilbert's theorem 90. 
\begin{theorem}
\label{thm.1loop}
We have $\tau_{\gamma,k}^{2k} \in F_k$.
\end{theorem}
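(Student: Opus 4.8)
The plan is to show that $\tau_{\gamma,k}^{2k}$ is fixed by the Galois group $G = \mathrm{Gal}(F_{G,k}/F_k) = \langle \sigma_1,\dots,\sigma_N\rangle$, which by Galois theory forces $\tau_{\gamma,k}^{2k}\in F_k$. The only source of $\theta_i$-dependence (hence of non-triviality under $G$) is the combination
\[
T(\theta) := \Big(\prod_i D_k^*(\theta_i^{-1})^{1/k}\Big)\,\Big(\det(\mb A\Delta_{z''}+\mb B\Delta_z^{-1})\,z^{f''/k}z''^{-f/k}\Big)^{-1/2}\,\sum_{m} a_m(\theta),
\]
and I would analyze how each factor transforms under a single generator $\sigma_j$, then raise to the $2k$-th power so that all the fractional-power ambiguities (the $1/k$ exponents on $\theta_i$, $z_i$, $z_i''$, $D_k^*$, and the square root) disappear.

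First I would compute $\sigma_j$ acting on the ``sum'' factor. Using $\sigma_j(\theta_i)=\zeta^{-\delta_{ij}}\theta_i$, each $a_m(\theta)$ picks up explicit powers of $\zeta$ from the $\theta_i^{-(\mb B^{-1}\mb A m)_i}$ prefactor and from reindexing the $q$-Pochhammer $(\zeta\theta_i^{-1};\zeta)_{m_i}$; one checks that the net effect is that $\sigma_j$ permutes the terms of $\sum_m a_m(\theta)$ up to an overall scalar in $\langle\zeta\rangle\subset F_k$ — precisely the kind of shift $m\mapsto m + e_j$ (or a diagonal rescaling) that a Nahm-type sum enjoys. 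This is essentially the content of the ``comparison of arithmetic and geometric mean / Hilbert 90'' remark: the sum $\sum_m a_m(\theta)$ is, up to an $F_k$-factor, a norm-like object, so $\sigma_j$ multiplies it by an element whose $2k$-th power lies in $F_k$. Second, for the $D_k^*$ factor I would use the quasi-periodicity of the cyclic dilogarithm under $x\mapsto\zeta^{\pm1}x$: $D_k^*(\zeta^{-1}\theta_i^{-1})/D_k^*(\theta_i^{-1})$ is an explicit monomial in $\theta_i$ and $\zeta$ (coming from telescoping $\prod_{s=1}^{k-1}(1-\zeta^{-s}x)^s$), so $\sigma_j$ multiplies $\prod_i D_k^*(\theta_i^{-1})^{1/k}$ by something whose $k$-th power is an explicit monomial — and that monomial's remaining ambiguity is killed by a further power. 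Third, the determinant–$z^{f''/k}z''^{-f/k}$ factor involves only $z_i, z_i''\in F$ and their $k$-th roots; $\sigma_j$ acts on $z_i^{1/k}$ by a root of unity, so again its $2k$-th power is $\sigma_j$-fixed.

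Combining these, I would show that for each generator $\sigma_j$ there is a root of unity $\mu_j$ (a $2k$-th root, say) with $\sigma_j(\tau_{\gamma,k}) = \mu_j\,\tau_{\gamma,k}$; then $\sigma_j(\tau_{\gamma,k}^{2k}) = \mu_j^{2k}\,\tau_{\gamma,k}^{2k} = \tau_{\gamma,k}^{2k}$, and since the $\sigma_j$ generate $G$ we conclude $\tau_{\gamma,k}^{2k}\in F_{G,k}^G = F_k$. The one point requiring care — and what I expect to be the main obstacle — is verifying that the full $\theta$-dependence really does reduce to multiplication by a root of unity under each $\sigma_j$, i.e.\ that the ``anomalous'' factors from the three pieces ($\sum_m a_m$, the $D_k^*$ product, and the $z^{f''/k}z''^{-f/k}$ term) combine so that all genuinely $F_k$-irrational contributions cancel and only a root of unity survives. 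This cancellation is exactly where the Neumann–Zagier relation $\mb A f + \mb B f'' = \nu$ and the symplectic property ($\mb B^{-1}\mb A$ symmetric) must enter: they are what forces the exponents appearing in the $\zeta$- and $\theta$-monomials to align. I would isolate this as the key computational lemma, prove it by a direct (but bookkept) manipulation of the exponents using those two identities together with the quoted results of~\cite{GZ1}, and then the Galois-descent conclusion is formal.
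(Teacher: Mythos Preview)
Your overall strategy --- show that $\tau_{\gamma,k}^{2k}$ is fixed by each generator $\sigma_j$ of $\mathrm{Gal}(F_{G,k}/F_k)$ --- is exactly right, and is what the paper does. But your picture of \emph{how} the $\theta$-dependence cancels is wrong in a way that would prevent the proof from going through.

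You assert that $\sigma_j$ multiplies $\sum_m a_m(\theta)$ by ``an overall scalar in $\langle\zeta\rangle\subset F_k$'', or at least by something whose $2k$-th power lies in $F_k$. This is false. Setting $S(\theta)=\sum_m a_m(\theta)$, the shift $m\mapsto m+e_j$ gives $\sigma_j S(\theta) = \epsilon_j(\theta)\,S(\theta)$ with $\epsilon_j(\theta)=a_{e_j}(\theta)^{-1}$, and $\epsilon_j(\theta)$ contains the factor $(1-\zeta\theta_j^{-1})$ together with a genuine $\theta$-monomial; its $2k$-th power still involves $(1-\zeta\theta_j^{-1})^{2k}\notin F_k$. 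Likewise, your claim that $D_k^*(\zeta^{\pm1}\theta_j^{-1})/D_k^*(\theta_j^{-1})$ is a monomial in $\theta_j,\zeta$ is incorrect: the quasi-periodicity identity gives
\[
\frac{\sigma_j D_k^*(\theta_j^{-1})}{D_k^*(\theta_j^{-1})} \;=\; \frac{D_k^*(\zeta\theta_j^{-1})}{D_k^*(\theta_j^{-1})} \;=\; \frac{1-z_j^{-1}}{(1-\zeta\theta_j^{-1})^k},
\]
which is not a monomial. So neither factor is individually tame.

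The actual cancellation is between $S$ and $P(\theta):=\prod_i D_k^*(\theta_i^{-1})^{1/k}$ jointly: one computes $\sigma_j P^k(\theta)/P^k(\theta)$ using the formula above and then rewrites $1-z_j^{-1}=\prod_i z_i^{-(\mb B^{-1}\mb A e_j)_i}$ via the gluing equations (this is the only place the Neumann--Zagier input enters). The result is exactly $\epsilon_j(\theta)^{-k}$, so that $\sigma_j(S^kP^k)=S^kP^k$ and hence $S^kP^k\in F_k$. The determinant and $z^{f''/k}z''^{-f/k}$ prefactors play no role in the $\theta$-cancellation: after raising $\tau_{\gamma,k}$ to the $2k$-th power they become $z^{f''}z''^{-f}\cdot\det(\cdots)^k\in F$. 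So the ``key computational lemma'' you should aim for is not that each piece transforms by a root of unity, but rather that $\sigma_jP^k/P^k = \epsilon_j^{-k}$, matching the transformation of $S^k$ exactly.
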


\begin{remark}
\label{rem.Sunit}
It is easy to see that $\tau_k^{2k}/\tau_1^{2k}$ is an $S$-unit of the ring
of integers of $F_k$ where $S=\la z,1-z\ra \subset F_K^*$. For an
illustration, see Section~\ref{sec.data}.
\end{remark}

\begin{remark}
\label{rem.tau.alternative}
After replacing $\zeta$ by $\zeta^{-1}$,
We can give an alternative formula
for the $1$-loop invariant at level $k$ as follows:
\be
\label{eq.tau.alternative}
\tau_{\gamma,k} :=  \frac{1}{k^{N/2}\sqrt{\det(\mb - A\Delta_z''
-\mb B\Delta_z^{-1}) z^{f''/k}z''^{-f/k}}}
\prod_{i=1}^N \frac{z_i^{\frac{k-1}{2k}} (z_i'')^{\frac{k-1}{k}}}{
D_k(\zeta^{-1}\th_i)^{1/k}} 
\sum_{m\in (\Z/k\Z)^N} b_m(\th)\,,
\ee
where 
\be
\label{eq.alt.amth}
b_m(\theta) = e^{i\pi\, L \cdot m} 
\zeta^{-\frac12 \big[m^T Q m+ L \cdot m\big]}
\prod_{i=1}^N
\frac{\th_i^{ Q_i \cdot m}}{
(\zeta^{-1} \th_i;\zeta^{-1} )_{m_i}}\,
\ee
and
\be
\label{eq.QL}
L= -\mb B^{-1} \nu + (1,\dots,1)^T, \qquad Q=I-\mb B^{-1}\mb A \,.
\ee
\end{remark}

\subsection{The $n$-loop invariants  at level $k$ for $n \geq 2$}
\label{sub.all}

The definition of the higher-loop invariants $S_{\gamma,n,k}$ is motivated 
by perturbation theory of the state-integral model for complex 
Chern-Simons theory, reviewed briefly in Section \ref{sec.CCS}. 
In this section we define the higher-loop invariants using 
formal Gaussian integration, and in the next section we give a 
Feynman diagram formulation of the higher-loop invariants.

Fix a $\BZ$-non-degenerate Neumann-Zagier datum $\ga$ and a 
positive integer $k$. We will use
the notation of the previous section. 
If $f: (\BZ/k\BZ)^N \longto \BC$, we define
\be
\label{eq.Avf}
\Av(f)= \frac{\sum_{m\in (\Z/k\Z)^N} a_m(\th) f(m)}{
\sum_{m\in (\Z/k\Z)^N} a_m(\th)}\,,
\ee
assuming that the denominator is nonzero.
Consider the symmetric matrix
\be
\CH=\frac1k(-\mb B^{-1}\mb A+\Delta_{z'})\,,
\ee
where $\Delta_{z'}=\diag(z_1',...,z_N')$. Assuming that $\CH$ is invertible,
a formal power series $f_\hbar(x)\in \BQ(z)[\![x,\hbar^{\frac12}]\!]$ has a 
{\em formal Gaussian integration}, given by 
\be 
\label{formalG}
\la f_{\hb}(x)\ra = \frac{\int dx \, 
e^{-\frac12x^T \CH \,x}f_{\hbar}(x)}{\int dx \, e^{-\frac12x^T \CH \,x}}\,.
\ee
This integration,
which is a standard tool of perturbation theory in
physics, and may be found in numerous texts (e.g. \cite{BIZ})
is defined by expanding $f_\hbar(x)$ as a series in $x$, 
and then formally integrating each monomial, using the quadratic form 
$\CH^{-1}$ to contract $x$-indices pairwise. 

The building block of each tetrahedron is the power series
\be
\label{eq.block}
\psi_\hbar(x,\th,m)=\exp\left(
\sum_{n=1}^\infty\sum_{j=0}^\infty \frac{\hbar^{n-1}(-1)^j}{
n!j!k^j} \sum_{s=1}^k B_n\left(\frac{s}{k}\right) \Li_{2-n-j}(\zeta^{m+s}
\th^{-1}) x^j
\right)
\ee
For an ideal triangulation $\calT$ with $N$ tetrahedra, a natural number $k$, 
and $m \in (\Z/k\Z)^N$, we define 
%
\be
\label{eq.fT}
f_{\calT,\hbar}(x;\th,m)=\exp \bigg(-\frac{\hbar^{\frac12}}{2k} x^T \mb B^{-1} 
\nu +\frac\hbar{8k} f^T\mb B^{-1}\nu \bigg)
\prod_{i=1}^N \psi_\hbar(x_i,\th_i,m_i)
\ee

\begin{definition}
\label{def.phih}
We define
\be
\label{eq.defphi}
\phi^+_{\ga,\zeta}(\hbar)=\Av(\la f_{\calT,\hbar}(x;\th,m) \ra) \in 
1+\hbar\BC[\![\hbar]\!] \,.
\ee
\end{definition}
\noindent 
Thus, we can write
\be
\label{def.Sn}
\phi^+_{\ga,\zeta}(\hbar) = 
\exp\left(\sum_{n=1}^\infty S_{\ga,n+1,k} \hbar^n\right) \,.
\ee
We call $S_{\ga,n,k}$ the level $k$, $n$-loop invariant of $\ga$.
We finally define
\be 
\phi_{\gamma,\zeta}(\hbar) = \tau_{\gamma,\zeta}
\phi^+_{\gamma,\zeta}(\hbar)\,. 
\ee

\begin{theorem}
\label{thm.1}
The coefficients of the power series $\phi^+_{\ga,\zeta}(\hbar)$ are in
$F_k$.
\end{theorem}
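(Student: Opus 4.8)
The plan is to show that every coefficient of $\phi^+_{\ga,\zeta}(\hbar)$ is fixed by the Galois group $G=\mathrm{Gal}(F_{G,k}/F_k)$, hence lies in $F_k$. The coefficients manifestly lie in $F_{G,k}(\hbar^{1/2})$ --- more precisely, each is a finite sum/product of the data $\th_i$, $z_i$, $\zeta$, the entries of $\mb B^{-1}\mb A$, $\mb B^{-1}\nu$, the Bernoulli numbers, and polylogarithm values $\Li_{2-n-j}(\zeta^{m+s}\th_i^{-1})$, all of which live in $F_{G,k}$; the half-integer powers of $\hbar$ can be shown to cancel, so the series is actually in $F_{G,k}[\![\hbar]\!]$. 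It then suffices to check invariance under each generator $\sigma_j$ from \eqref{eq.sigma}.

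First I would record how each ingredient transforms under $\sigma_j$. Since $\sigma_j(\th_i)=\zeta^{-\delta_{i,j}}\th_i$, we have $\sigma_j(\zeta^{m+s}\th_i^{-1}) = \zeta^{(m+s)+\delta_{i,j}}\th_i^{-1}$, so applying $\sigma_j$ to $\psi_\hbar(x_i,\th_i,m_i)$ simply shifts the integer $m_i$ by $1$ inside the polylog arguments --- i.e. $\sigma_j(\psi_\hbar(x_i,\th_i,m_i)) = \psi_\hbar(x_i,\th_i,m_i+\delta_{i,j})$ as a function on $(\BZ/k\BZ)^N$ (using that the Bernoulli sum $\sum_{s=1}^k B_n(s/k)\Li_{2-n-j}(\zeta^{m+s}\th^{-1})$ only depends on $\zeta^{m}\th^{-1}$ cyclically, since $s\mapsto s+1$ is a bijection of $\BZ/k\BZ$). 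Likewise I would compute $\sigma_j(a_m(\th))$ from \eqref{eq.amth}: the prefactor $\th_i^{-(\mb B^{-1}\mb A m)_i}$ picks up $\zeta^{\delta_{ij}(\mb B^{-1}\mb A m)_i}$, and the $q$-Pochhammer $(\zeta\th_i^{-1};\zeta)_{m_i}$ transforms in a controlled way. The expected upshot is a clean shift identity of the form $\sigma_j(a_m(\th)) = c_j(m)\, a_{m+\delta_{ij}}(\th)$ and, more importantly, that the \emph{combination} $a_m(\th)\la f_{\calT,\hbar}(x;\th,m)\ra$ transforms by $\sigma_j$ into the same expression with $m$ replaced by $m+e_j$ (possibly times a common $m$-independent unit). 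Because $\Av$ is the ratio $\sum_m a_m f(m) / \sum_m a_m$ and the sum ranges over all of $(\BZ/k\BZ)^N$, re-indexing $m\mapsto m - e_j$ shows $\sigma_j$ fixes $\Av(\la f_{\calT,\hbar}\ra) = \phi^+_{\ga,\zeta}(\hbar)$; any common factor cancels between numerator and denominator. This is essentially the arithmetic-to-geometric-mean / Hilbert 90 mechanism already invoked for Theorem~\ref{thm.1loop}.

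The key steps, in order: (1) verify the coefficients of $\phi^+_{\ga,\zeta}(\hbar)$ lie in $F_{G,k}$ and that odd powers of $\hbar^{1/2}$ drop out (the $\hbar^{1/2}$ term in \eqref{eq.fT} integrates against the Gaussian to produce only integer powers of $\hbar$, by the usual parity of Wick contractions); (2) establish the transformation rule $\sigma_j(\psi_\hbar(x_i,\th_i,m_i))=\psi_\hbar(x_i,\th_i,m_i+\delta_{ij})$, using periodicity of the inner Bernoulli--polylog sum; (3) check that the Gaussian weight $e^{-\frac12 x^T\CH x}$ and the $\hbar$-prefactor in $f_{\calT,\hbar}$ are $G$-invariant (the entries of $\CH=\frac1k(-\mb B^{-1}\mb A+\D_{z'})$ lie in $F\subset F_k$, so $\sigma_j$ acts trivially), hence $\sigma_j$ commutes with $\la\,\cdot\,\ra$ and $\sigma_j(\la f_{\calT,\hbar}(x;\th,m)\ra)=\la f_{\calT,\hbar}(x;\th,m+e_j)\ra$; (4) compute $\sigma_j(a_m(\th))$ and confirm the mismatch between $a_m$ and $a_{m+e_j}$ is an $m$-independent constant (or that the combined integrand shifts cleanly); (5) conclude by re-indexing the sum over $(\BZ/k\BZ)^N$ that $\sigma_j\big(\phi^+_{\ga,\zeta}(\hbar)\big)=\phi^+_{\ga,\zeta}(\hbar)$ for all $j$, whence the coefficients are in $F_k$.

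The main obstacle I anticipate is step (4): tracking the precise $m$-dependence of $\sigma_j(a_m(\th))/a_{m+e_j}(\th)$. The $q$-Pochhammer $(\zeta\th_i^{-1};\zeta)_{m_i}$ does not transform as a pure power of $\zeta$ under $\sigma_j$ --- one gets $(\zeta^2\th_i^{-1};\zeta)_{m_i} = (\zeta\th_i^{-1};\zeta)_{m_i+1}/(1-\zeta\th_i^{-1}) \cdot$ (something), so one must carefully telescope this against the shift $m_i\mapsto m_i+1$ and reconcile it with the quadratic-exponent factors $e^{-i\pi m\cdot\mb B^{-1}\mb A m}\zeta^{\frac12[\cdots]}$, which also change. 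Getting all these Gauss-sum-type prefactors to conspire into a single $m$-independent unit (so it cancels in the ratio defining $\Av$) is where the real bookkeeping lies; here I would lean on the Neumann--Zagier symplectic/symmetry properties ($\mb B^{-1}\mb A$ symmetric, $\mb A f+\mb B f''=\nu$) exactly as in the proof of Theorem~\ref{thm.1loop}, and on the fact that the $D_k^*$ factor outside the sum carries the leftover $\th$-dependence needed to make $\tau_{\gamma,k}$ itself transform predictably --- but for $\phi^+$ alone, only the ratio structure of $\Av$ is needed, which should make the argument somewhat lighter than for $\tau$.
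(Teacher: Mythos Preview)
Your proposal is correct and takes essentially the same approach as the paper: step (4) is exactly Lemma~\ref{lem.asigma} (proved by the four-factor bookkeeping you anticipate, showing $a_m(\sigma_j\theta)/a_{m+e_j}(\theta)=\epsilon_j(\theta)$ is $m$-independent), and the ratio structure of $\Av$ together with the shift property of step~(2) is precisely how the paper concludes. One minor correction to your reasoning in step~(2): you cannot re-index $s\mapsto s+1$ since $B_n(s/k)$ is not $k$-periodic in $s$; rather, $\sigma_j$ sends $\zeta^{m_i+s}\theta_i^{-1}$ directly to $\zeta^{(m_i+1)+s}\theta_i^{-1}$, so each polylog term---and hence the whole Bernoulli-weighted sum---shifts $m_i\to m_i+1$ with no re-indexing of $s$ needed.
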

In particular, the series $\phi^+_{\ga,\zeta}(\hbar)$ depends on $\gamma$,
the $k$-th root of unity $\zeta$, but it is independent of the
choice of $k$-th roots $\th_i$ of $z_i$. 
The above theorem is not trivial since $a_m(\th)$ is an element of the
larger field $F_{G,k}$, whereas the coefficients of the above average are
claimed to be in the field $F_k$. 
For the proof, see Section \ref{sec.proofs}.

\begin{remark}
\label{rem.primitivek}
Theorems~\ref{thm.1loop} and \ref{thm.1} remain valid if $\zeta$ denotes 
a fixed primitive $k$-th root of unity instead of $\zeta=e^{2 \pi i/k}$.
Probably a better notation is $S_{\ga,n,\zeta}$ rather than $S_{\ga,n,k}$
which is valid for all primitive $k$-roots of unity $\zeta$.
\end{remark}

\subsection{Feynman diagrams for the $n$-loop invariant}
\label{sub.diagrams}

In this section we give a Feynman diagram formulation of the higher-loop
invariants.
A Feynman diagram $\mb D$ is a finite graph possibly with loops and multiple
edges. To every edge in a Feynman diagram we associate the symmetric 
$N\times N$ propagator matrix
\be  \Pi =  \hbar k (-\mb B^{-1}\mb A + \Delta_{z'})^{-1}\,, \ee
and to a vertex with valence $j$ we associate the vertex factor 
$\Gamma^{(j)}$, which is a tensor of rank $j$ whose only nonzero entries 
$\Gamma_i^{(j)} := \Gamma_{ii...i}^{(j)} \in F_{G,k}(\!(\hbar)\!)$ lie on the 
diagonal, and are functions of $m\in (\BZ/k\BZ)^N$ and $\theta$,
\be
\begin{array}{rl}
\Gamma^{(0)} &= \displaystyle \frac{\hbar}{8k}f\mb B^{-1}\nu 
+ \sum_{n=2}^\infty \frac{\hbar^{n-1}}{n!}\sum_{s=1}^k B_n
\big(\tfrac sk\big) \sum_{i=1}^N\Li_{2-n}(\zeta^{m_i+s} \theta_i^{-1}) \\[.1cm]
\Gamma_i^{(1)} &=  \displaystyle -\frac{1}{2k}(\mb B^{-1}\nu)_i 
- \sum_{n=1}^\infty \frac{\hbar^{n-1}}{k\,n!}\sum_{s=1}^k B_n
\big(\tfrac sk\big)\Li_{1-n}(\zeta^{m_i+s}\theta_i^{-1}) \\[.1cm]
\Gamma_i^{(2)} &= \displaystyle \sum_{n=1}^\infty 
\frac{\hbar^{n-1}}{k^2\,n!}\sum_{s=1}^k B_n\big(\tfrac sk\big)
\Li_{-n}(\zeta^{m_i+s}\theta_i^{-1}) \\[.1cm]
\Gamma_i^{(j)} &= \displaystyle \sum_{n=0}^\infty 
\frac{(-1)^j\hbar^{n-1}}{k^j\,n!}\sum_{s=1}^k B_n\big(\tfrac sk\big)
\Li_{2-n-j}(\zeta^{m_i+s}\theta_i^{-1})\qquad (j\geq 3)\,,
\end{array}
\ee
%
where $B_n(x)$ are the Bernoulli polynomials, defined by 
$te^{xt}/(e^t-1) = \sum_{n\geq 0}B_n(x)t^n/n!$ and $F_{G,k}(\!(\hbar)\!)$ denotes
the ring of formal Leurent series in $\hbar$ with coefficients in $F_{G,k}$.

Note that each $\Gamma_i^{(j)}$ only depends on $m,\,\theta$ through the 
combination $\zeta^{m_i}\theta_i^{-1}$. Moreover, all the $l$-polylogarithms 
appearing here involve non-positive $l$, hence are rational functions. 
The evaluation $[\mb D]_m$ of a diagram is obtained by contracting 
propagator and vertex indices, and multiplying by a standard symmetry 
factor $1/|\sigma(\mb D)|$, where $\sigma(\mb D)$ is the diagram's symmetry 
group,
\be [\mb D]_m =  \frac{1}{|\sigma(\mb D)|} 
\sum_{\text{coincident indices}} \;\;\prod_{\text{edges $e$}} 
\Pi(e)\prod_{\text{vertices $v$}}  \Gamma(v)\,. \ee
For example, the diagram in the center of the top row of Figure 
\ref{fig.diags2intro} has an evaluation $[\mb D]_m = 
\frac18 \sum_{i,i'=1}^N \Pi_{ii}\Gamma^{(3)}_i \Pi_{ii'}\Gamma^{(3)}_{i'}\Pi_{i'i'}$. 
To the \emph{trivial} diagram $\bullet$ that consists of one vertex and
no edges we associate the vacuum energy 
$[\bullet]_m=\Gamma^{(0)}$. The next Lemma follows from evaluating the 
formal Gaussian integral~\eqref{eq.defphi} in terms of Feynman diagrams;
see~\cite{BIZ}.

\begin{lemma}
\label{lem.phih}
For a $\BZ$-non-degenerate Neumann-Zagier datum $\gamma$, we have
\be 
\phi^+_{\gamma,\zeta}(\hbar)  = 
\Av \bigg[ \exp\bigg( \sum_{\text{connected $\mb D$}} [\mb D]_m\bigg)\bigg]\, 
\in 1 + \hbar\,\BC[\![\hbar]\!]\,, 
\ee
where the sum is over all connected diagrams $\mb D$, including the empty 
diagram. 
\end{lemma}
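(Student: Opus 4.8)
\emph{Strategy.} The statement is the Feynman-diagram evaluation of the formal Gaussian integral in \eqref{eq.defphi}, in the sense of \cite{BIZ}, and I would prove it by unwinding that definition. Fix $m\in(\BZ/k\BZ)^N$ and work first with the inner object $\langle f_{\calT,\hbar}(x;\th,m)\rangle$, leaving the average over $m$ to the end. By \eqref{eq.fT} the function $f_{\calT,\hbar}(x;\th,m)$ is a scalar factor, $\exp\!\big(-\tfrac{\hbar^{1/2}}{2k}x^T\mb B^{-1}\nu+\tfrac{\hbar}{8k}f^T\mb B^{-1}\nu\big)$, times the product over the $N$ tetrahedra of the blocks $\psi_\hbar(x_i,\th_i,m_i)$ of \eqref{eq.block}; so its logarithm is a formal power series in $x$ and $\hbar^{1/2}$ in which every monomial involves a single coordinate $x_i$. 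Reading off the coefficient of $x_i^j$ and comparing with the polylogarithmic sums in the definitions of the vertex factors, one checks that this exponent equals $\Gamma^{(0)}+\sum_{i=1}^N\sum_{j\ge 1}\tfrac1{j!}\Gamma^{(j)}_i x_i^j$, where $\Gamma^{(0)}=[\bullet]_m$ is the vacuum energy, provided the integration variable $x$ is rescaled by the power of $\hbar$ that turns the quadratic form $\CH$ of \eqref{formalG} into the propagator $\Pi=\hbar k(-\mb B^{-1}\mb A+\Delta_{z'})^{-1}=\hbar\CH^{-1}$ carried on each contracted pair of $x$-legs. Only the diagonal part of each $\psi_\hbar$ produces vertices; the off-diagonal coupling of coordinates enters solely through $\CH$ in the Gaussian measure.

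\emph{Wick's theorem and the linked-cluster formula.} Because $\gamma$ is $\BZ$-non-degenerate and $\CH$ is invertible, the formal Gaussian integral of $\exp\!\big(\Gamma^{(0)}+\sum_{i,j\ge1}\tfrac1{j!}\Gamma^{(j)}_i x_i^j\big)$ is computed order by order in $\hbar$ by expanding the interaction and contracting $x$-legs in pairs with $\CH^{-1}$; since each contraction supplies a propagator $\Pi$ with a positive power of $\hbar$, only finitely many contractions contribute at each order of $\hbar$, so this is a legitimate identity in $\BC[\![\hbar^{1/2}]\!]$. This is exactly the Feynman-diagram expansion of \cite{BIZ}: the outcome is $\sum_{\mb D}[\mb D]_m$, the sum over all diagrams — disconnected ones included, and the trivial diagram $\bullet$ (one vertex, no edges, contributing $\Gamma^{(0)}$) included — built from the vertices $\Gamma^{(j)}$ and the edge $\Pi$, each weighted by $1/|\sigma(\mb D)|$. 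By the exponential (linked-cluster) formula, the generating function of all diagrams is the exponential of the generating function of the connected ones, with $\bullet$ counted among the connected diagrams, so $\langle f_{\calT,\hbar}(x;\th,m)\rangle=\exp\!\big(\sum_{\text{connected }\mb D}[\mb D]_m\big)$ for every $m$. Applying $\Av$, which by \eqref{eq.defphi} is precisely how $\phi^+_{\gamma,\zeta}(\hbar)$ is assembled from $\langle f_{\calT,\hbar}(x;\th,m)\rangle$, gives the claimed identity $\phi^+_{\gamma,\zeta}(\hbar)=\Av\big[\exp\big(\sum_{\text{connected }\mb D}[\mb D]_m\big)\big]$.

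\emph{Normalization.} That the right-hand side lies in $1+\hbar\BC[\![\hbar]\!]$, as already recorded in Definition \ref{def.phih}, follows from a power count: $\Av(1)=1$; $\Gamma^{(0)}$ is manifestly $O(\hbar)$; and any connected diagram with at least one edge has $\hbar$-degree at least $1$, since each of its $E$ edges contributes a factor $\hbar$, each vertex of valence $\ge3$ contributes at worst $\hbar^{-1}$, each vertex of valence $1$ or $2$ contributes $O(\hbar^0)$, and $2E$ is at least thrice the number of valence-$\ge3$ vertices. (The half-integral powers of $\hbar$ produced by individual diagrams cancel in the sum, as they must.) Hence $\sum_{\text{connected}}[\mb D]_m\in\hbar\BC[\![\hbar]\!]$, and applying $\exp$ and then the $\BC$-linear map $\Av$ with $\Av(1)=1$ yields an element of $1+\hbar\BC[\![\hbar]\!]$.

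\emph{Main obstacle.} The one place needing real care is the bookkeeping of the first paragraph: verifying that the explicit exponent of $f_{\calT,\hbar}(x;\th,m)$ reproduces exactly the vertex factors $\Gamma^{(j)}$ and the vacuum energy $\Gamma^{(0)}$. This means correctly tracking the rescaling of $x$ that installs the factor $\hbar$ in $\Pi$, matching the $n$-ranges in the definitions of $\Gamma^{(0)}$, $\Gamma^{(1)}$ and $\Gamma^{(j\ge3)}$ against those produced by the blocks $\psi_\hbar$ (with the scalar prefactor of \eqref{eq.fT} supplying the rational parts of $\Gamma^{(0)}$ and $\Gamma^{(1)}$), and locating the classical, order-$\hbar^{-1}$ and order-$\hbar^0$ pieces of the higher-valence vertices inside the $\hbar$-expansion. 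Everything past that point is the model-independent combinatorics of perturbative Gaussian integration.
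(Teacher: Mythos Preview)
Your proposal is correct and takes essentially the same approach as the paper: the paper's own ``proof'' is the single sentence \emph{``The next Lemma follows from evaluating the formal Gaussian integral~\eqref{eq.defphi} in terms of Feynman diagrams; see~\cite{BIZ}''}, and you have simply unpacked what that sentence means---Wick's theorem for the formal Gaussian integral of $\langle f_{\calT,\hbar}(x;\th,m)\rangle$, followed by the linked-cluster/exponential formula, and finally application of $\Av$. Your identification of the vertex-factor bookkeeping (matching the exponent of \eqref{eq.fT}--\eqref{eq.block} against the $\Gamma^{(j)}$ and tracking the $\hbar$-rescaling that converts $\CH^{-1}$ to $\Pi$) as the only substantive check is exactly the point the paper is suppressing when it defers to \cite{BIZ}.
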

Using the above Lemma and Equation \eqref{def.Sn}, it follows 
that in order to compute $S_{\gamma,n}$ for $n\geq 2$, it 
suffices to consider the finite set of Feynman diagrams with
\be 
\label{eq.LD}
\text{\#(1-vertices) + \#(2-vertices) + \#(loops)} \leq n \,,
\ee
and to truncate the formal power series in each of the vertex factors to 
finite order in $\hbar$. In the next two sections, we give axplicit
formulas for the 2 and 3-loop invariants.

\subsection{The $2$-loop invariant in detail}
\label{sub.2loop}

The six diagrams that contribute to $S_{\ga,2}$ are shown in Figure 
\ref{fig.diags2intro}, together with their symmetry factors.
Their evaluation gives the following formula for $S_{\ga,2,k}$:
\begin{align} \label{2loopexplicit}
S_{\ga,2,k} &= \Av\left( \coeff\left[\Gamma^{(0)}+ 
\frac18  \Gamma^{(4)}_i (\Pi_{ii})^2 
+ \frac18\Pi_{ii}\Gamma^{(3)}_i\Pi_{ij}\Gamma^{(3)}_j\Pi_{jj} 
+ \frac{1}{12}\Gamma^{(3)}_i(\Pi_{ij})^3\Gamma^{(3)}_j \right. \right.\\
&\hspace{1in}
\left.\left. + \frac12 \Gamma^{(1)}_i\Pi_{ij}\Gamma^{(3)}_j\Pi_{jj} 
+ \frac12 \Gamma^{(2)}_i\Pi_{ii} 
+ \frac12\Gamma^{(1)}_i\Pi_{ij}\Gamma^{(1)}_j ,\; \hbar\right]
\right) \,, 
\notag
\end{align}
%
where the dependence of vertex factors on $m$ is suppressed;
all the indices $i$ and $j$ are implicitly summed from $1$ to $N$;
and $\coeff[f(\hb),\,\hb^k]$ denotes the coefficient of $\hb^k$ 
in a power series $f(\hb)$.

\begin{figure}[htb]
\centering
\includegraphics[width=4in]{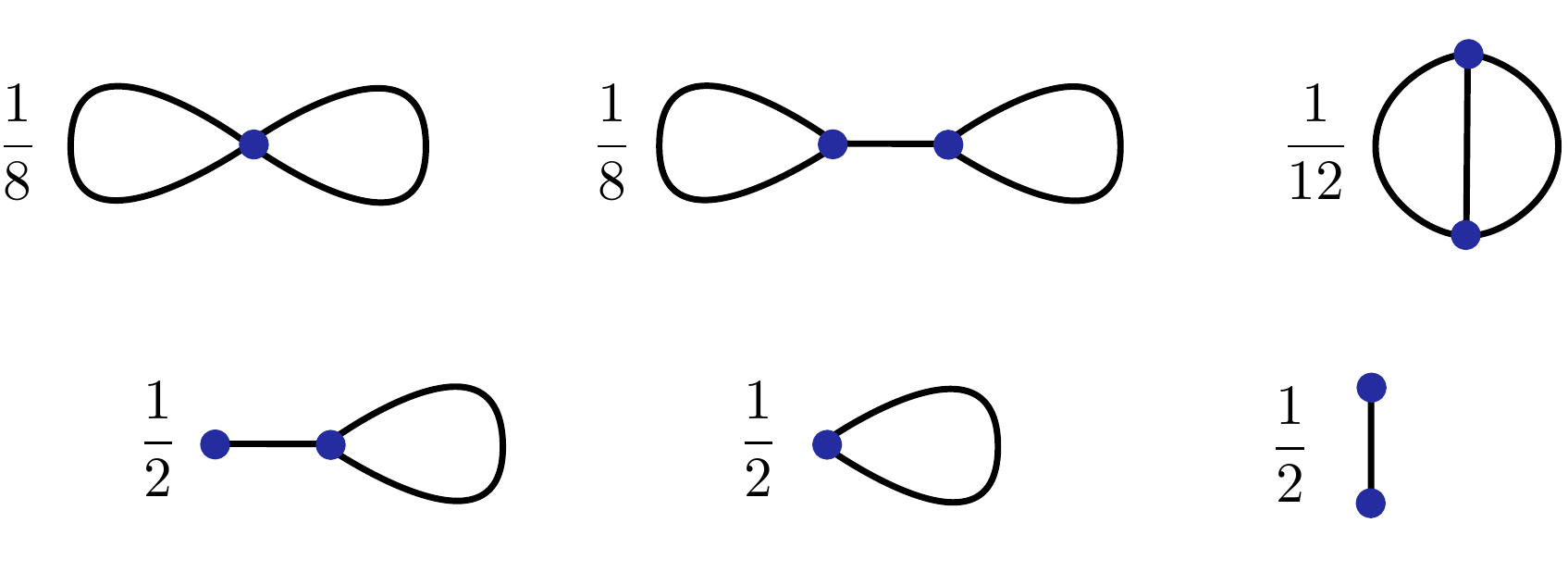}
\caption{Diagrams contributing to $S_{\ga,2}$ with symmetry factors. The top 
row of diagrams have exactly two loops, while the bottom row have fewer 
loops and additional $1$-vertices and $2$-vertices.}
\label{fig.diags2intro}
\end{figure}

Concretely, the 2-loop contribution from the vacuum energy is 
$$
\Gamma^{(0)}=\frac{1}{8k}f \mb B^{-1}\nu-\frac12
\sum_{s=1}^k(\frac{s^2}{k^2}-\frac{s}{k}+\frac16)
\sum_{i=1}^N(1-\zeta^{-m_i-s}\theta_i)^{-1} \,.
$$
The four other vertices contribute only at leading order; abbreviating 
$\tilde\theta_i=\zeta^{-m_i-s}\theta_i$ and 
$\tilde \theta_i' = (1-\zeta^{-m_i-s}\theta_i)^{-1}$, they are
\be
\notag  \Gamma^{(1)}_i=-\frac{1}{2k}(\mb B^{-1}\nu)_i + 
\frac1k\sum_{s=1}^k\Big(\frac sk-\frac12\Big)\tilde\theta_i'\,, 
\ee
\be
\notag \Gamma^{(2)}_i = \frac{1}{k^2}\sum_{s=1}^k \Big(\frac sk-\frac12\Big) 
\tilde\theta_i\tilde\theta_i'{}^2\,,
\qquad
\Gamma^{(3)}_i = -\frac{\tilde\theta_i\tilde\theta_i'^2}{k\hbar}\,,\qquad 
\Gamma^{(4)}_i= 
-\frac{\tilde\theta_i(1+\tilde\theta_i)\tilde\theta_i'{}^3}{k\hbar}\,.
\ee

\begin{figure}[htpb]
\centering
\includegraphics[width=6.5in]{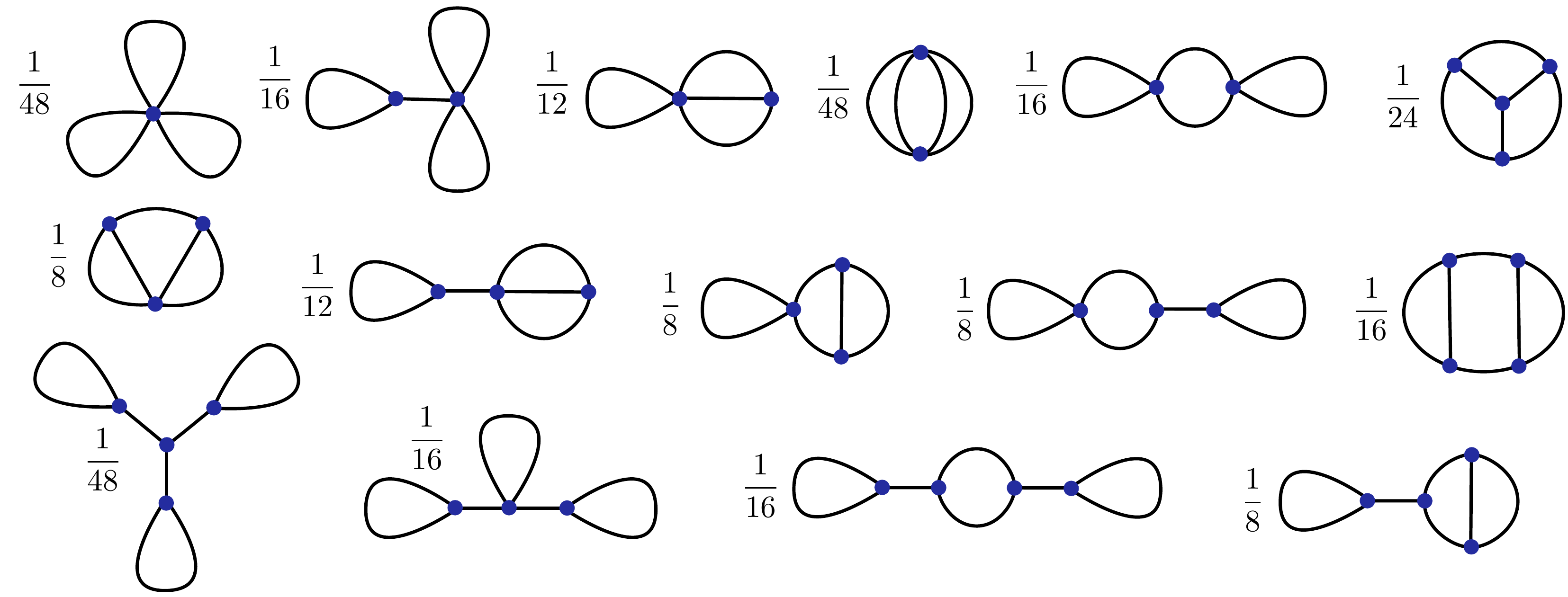}
\caption{Diagrams with three loops contributing to $S_3$.}
\label{fig.diags3}
\end{figure}

\begin{figure}[htpb]
\centering
\includegraphics[width=6.5in]{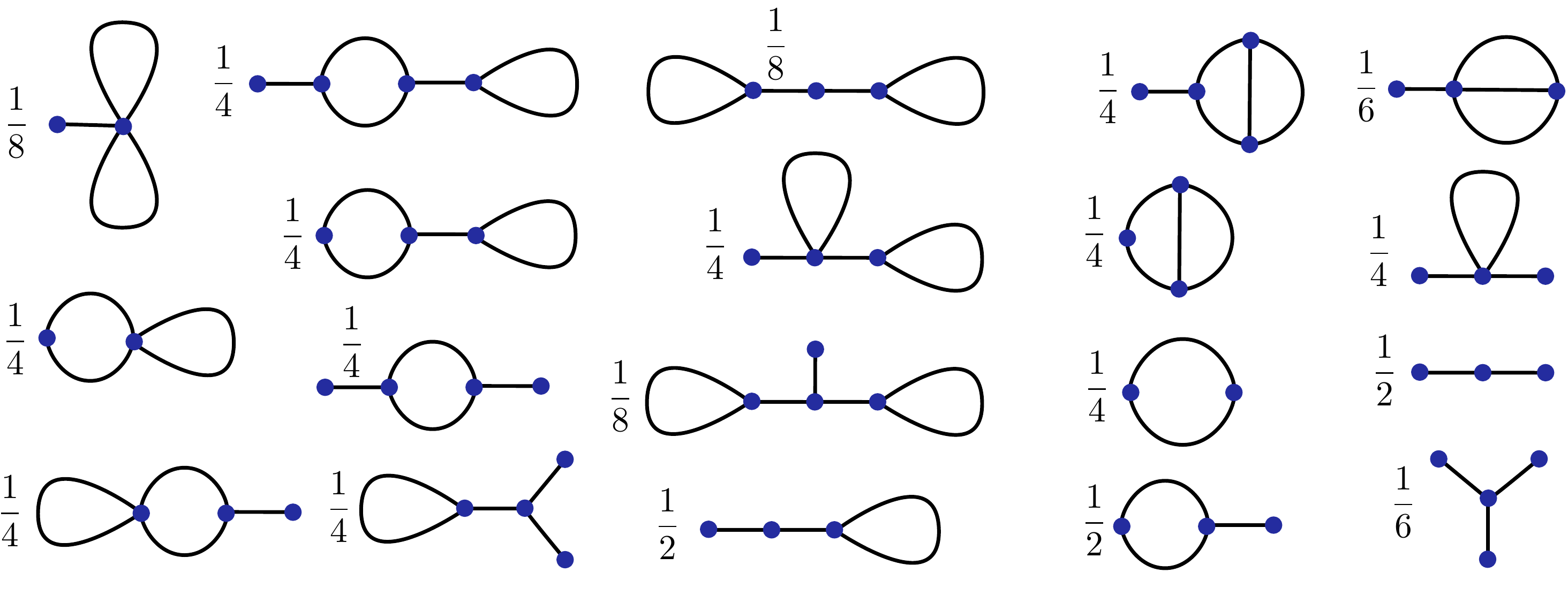}
\caption{Diagrams with $1$-vertices and $2$-vertices 
contributing to $S_3$.}
\label{fig.diags3dot}
\end{figure}

\subsection{The $3$-loop invariant}
\label{sub.3loop}

For the next invariant $S_{\ga,3,k}$, all the diagrams of Figure 
\ref{fig.diags2intro} contribute, collecting the coefficient
of $\hb^2$ of their evaluation. In addition, there are $34$ new diagrams 
that satisfy the inequality \eqref{eq.LD}; they are shown in Figures 
\ref{fig.diags3} and \ref{fig.diags3dot}. Calculations indicate that 
the 3-loop invariant $S_{\ga,3,k}$ is well defined, and invariant under 2-3
moves. The invariants $\tau_{\ga,k}, S_{\ga,2,k}, S_{\ga,3,k}$ have been 
programmed in {\tt Mathematica} as well as in {\tt python} and take as input a 
Neumann-Zagier datum readily available from {\tt SnapPy} \cite{snappy}. 

The number of diagrams that contribute to the $n$-loop invariant 
is given in Table \ref{tab.diagrams}.
For large $n$, we expect that $n!^2 C^n$ diagrams contribute to 
the $n$-loop invariant. It would be nice to find a more efficient computation.

\begin{table}[htb]
\begin{center}
\begin{tabular}{|l|l|l|l|l|l|}\hline
 $n$ & $2$ & $3$ & $4$ & $5$ & $6$ \\ \hline
$g_n$ & $6$ & $40$ & $331$ & $3700$ & $53758$ \\ \hline
\end{tabular}\vspace{.2cm}
\caption{The number $g_n$ of graphs that contribute to the $n$-loop 
invariant for $n=2,\dots,6$.} 
\label{tab.diagrams}
\end{center}
\end{table}

\subsection{Matching with the numerical asymptotics of the Kashaev
invariant}
\label{sub.matching}


Numerical asymptotics of the Kashaev invariant were obtained by Zagier
and the second author in~\cite{GZ1} for several knots, summarized in Table 
\ref{tab.num.kashaev}.
Our $n$-loop invariants at level $k$, presented in Section~\ref{sec.compute},
agree with the numerical computations of~\cite{GZ1}. This is a strong 
consistency test for all computational methods. 

\begin{table}[htb]
\begin{center}
\begin{tabular}{|c|c|c|}\hline
Knot & Level & Loops \\ \hline
$4_1$ & $\leq 7$ & $\leq 5$ \\ \hline
$5_2$ & $\leq 5$ & $\leq 4$ \\ \hline
$(-2,3,7)$ & $\leq 5$ & $\leq 4$ \\ \hline
$(-2,3,-3)$ & $\leq 3$ & $\leq 5$ \\ \hline
$(-2,3,9)$ & $\leq 3$ & $\leq 5$ \\ \hline
$6_1$ & $\leq 3$ & $\leq 3$ \\ \hline
\end{tabular}\vspace{.2cm}
\caption{Numerical asymptotics of the Kashaev invariant from~\cite{GZ1}.} 
\label{tab.num.kashaev}
\end{center}
\end{table}

\subsection{Topological invariance}
\label{sub.inv}

We conjectured in the introduction that $\phi_{\gamma,\zeta}(\hbar)$ is 
actually a topological invariant --- depending only on a knot $K$ and the 
root of unity $\zeta$, rather than on a full Neumann-Zagier datum --- and 
thus $\phi_{\gamma,\zeta}(\hbar) = \phi_{K,\zeta}(\hbar)$ is the series that 
appears in the right hand side of the Quantum Modularity Conjecture. 
We can now make this a bit more precise.

We begin with the following experimental observations:
\begin{itemize}
\item 
For all 502 hyperbolic knots with at most 8 ideal tetrahedra in the 
{\tt CensusKnots}, their default {\tt SnapPy} triangulations are 
$\BZ$-nondegenerate, in the sense that there is a gauge 
(for a definition, see Section~\ref{sub.NZ}) for which $|\!\det\, \mb B|=1$.
\item 
The default and the canonical {\tt SnapPy} triangulation of the $5_2$
and $6_1$ knots have several gauges for which $|\!\det\, \mb B|=1$.
For each of the above knot we have checked that $(\tau_{\gamma,k})^{12k}$
is independent of the above gauges, and that $S_{\gamma,2,k}$ 
is independent up to addition of $\BZ/{24k}$, and that $S_{\gamma,3,k}$ is  
independent. These are slightly better than the general ambiguities that 
appear in the nonperturbative level-$k$ state-integral (see Section 
\ref{sec.CCS}), which motivated the definitions above. Moreover, 
(local) triangulation-invariance of the  
level-$k$ state integral suggests that $S_{\gamma,n,k}$ should be independent 
of triangulation and $\gamma$ for all $n$. 
\end{itemize}
We are therefore led to conjecture
\begin{conjecture}
\label{conj.inv}
For any knot $K$, there exists a triangulation with a non-degenerate 
Neumann-Zagier datum $\gamma$. The series $\phi_{\gamma,\zeta}(\hbar)$ is 
independent of the choice of triangulation and $\gamma$, up to multiplication 
by $\zeta^{\frac1{12}}$ and $e^{\hbar/24k}$. Modulo these ambiguities, 
$\phi_{\gamma,\zeta}(\hbar)=\phi_{K,\zeta}(\hbar)$ equals the series on the 
right hand side of the Quantum Modularity Conjecture.
\end{conjecture}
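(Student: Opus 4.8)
The plan is to treat the three assertions of Conjecture~\ref{conj.inv} separately, since they are of very different depths. The existence claim and the move-invariance claim are essentially combinatorial and should be provable by the methods developed here, whereas the identification with the Quantum Modularity series is as deep as the Quantum Modularity Conjecture itself.

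For the existence of a $\Z$-nondegenerate datum, I would start from any ideal triangulation $\calT$ of $M=S^3\setminus K$ --- which exists by Thurston and Epstein--Penner --- and exploit the two sources of freedom in the Neumann-Zagier datum: the $3^N$ choices of quad (which act on the columns of $(\mb A\;\mb B)$ by the symplectic matrices of the cyclic permutations of $z,z',z''$) and the 2-3 moves (which enlarge $N$). Since $(\mb A\;\mb B)$ is the top half of a symplectic matrix over $\Z$, the $N\times 2N$ array is of full rank and unimodular; the task is to show that some combination of quad changes and moves brings the $\mb B$-block to a unimodular one. I would try to prove that a single well-chosen 2-3 move strictly reduces $|\det\mb B|$ whenever it is not already $\pm1$, yielding a termination argument; the delicate point is controlling the effect of the move on $\det\mb B$ without destroying geometricity of the shapes.

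The core of the invariance claim is invariance under the Pachner 2-3 (and 0-2) moves, since by Matveev--Piergallini any two ideal triangulations are connected by such moves. I would write out explicitly how $\ga=(\mb A,\mb B,\nu,z,f,f'')\mapsto\ga'$ under a 2-3 move, and then show that the extra shape variable introduced on the three-tetrahedron side can be ``integrated out'': the new finite sum over the extra $\Z/k\Z$-component, together with the new formal Gaussian integration over the extra fluctuation, collapses the level-$k$ building blocks $\psi_\hbar$ and the weights $a_m(\th)$ of three tetrahedra to those of two. The identity that makes this collapse work is the finite pentagon identity for the cyclic quantum dilogarithm $D_k$ of Faddeev--Kashaev, promoted to an identity of formal power series in $\hbar$ via the Bernoulli-polynomial and polylogarithm expansion that already underlies \eqref{eq.block}. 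Tracking the determinant prefactor and the $D_k$-products through this identity should produce precisely the central/anomaly ambiguities $\zeta^{1/12}$ and $e^{\hbar/24k}$ claimed in the statement; gauge (quad) invariance is then a separate, purely local check that $\psi_\hbar$ is invariant under $z\mapsto z'\mapsto z''$ up to these same factors, which reduces to the functional equations of the polylogarithms.

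The deepest and genuinely open step is the identification $\phi_{\ga,\zeta}=\phi_{K,\zeta}$. Here I would pass through the state-integral model of $SL(2,\C)$ Chern-Simons theory at level $k$ sketched in part~(d) of the introduction: express the Kashaev invariant at the relevant root of unity as a level-$k$ state-sum built from $\calT$ with the cyclic quantum dilogarithm, identify the modular argument $(aX+b)/(cX+d)$ with the $\sigma\to k$ limit, and then perform a rigorous steepest-descent analysis as $cX+d\to\infty$ around the geometric saddle. The saddle-point expansion should reproduce $e^{V/(c\hbar)}$ from the volume, $\tau_{\ga,k}$ from the Gaussian (one-loop) determinant, and $\phi^+_{\ga,\zeta}(\hbar)=\exp(\sum_n S_{\ga,n+1,k}\hbar^n)$ from the higher Feynman diagrams of Lemma~\ref{lem.phih}. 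The hard part --- and the reason this remains a conjecture --- is exactly this last analysis: even granting the state-sum representation, justifying the all-orders asymptotic expansion (uniformity, the contribution of non-geometric saddles, and convergence of the finite sums) is of the same order of difficulty as the Quantum Modularity Conjecture, which at present is established only for the $4_1$ knot. I therefore expect the existence and move-invariance claims to be within reach of the techniques of this paper, with the identification with the Kashaev asymptotics as the principal obstacle.
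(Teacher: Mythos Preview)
The statement is a \emph{conjecture}; the paper does not prove it and does not claim to. The only support offered is experimental --- census checks of $\BZ$-nondegeneracy for small knots, and numerical verification of gauge-independence of $(\tau_{\gamma,k})^{12k}$ and $S_{\gamma,n,k}$ for $5_2$ and $6_1$ --- together with the heuristic derivation from the level-$k$ state integral in Section~\ref{sec.CCS} and its known ambiguities (Section~\ref{sub.ambiguities}). There is therefore no proof in the paper to compare your proposal against, and you should not present this as something provable by the methods developed here.

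Your outline also has two structural gaps even as a strategy. For existence, the idea that a well-chosen 2-3 move strictly reduces $|\det\mb B|$ is unsupported: there is no known monotonicity of $|\det\mb B|$ under Pachner moves or quad changes, and no proof in the literature that every hyperbolic knot complement admits a $\BZ$-nondegenerate Neumann-Zagier datum --- the paper only reports a computer check on the census. For move-invariance, the Matveev--Piergallini scheme runs into an obstacle you do not address: a 2-3 move applied to a $\BZ$-nondegenerate triangulation need not produce a $\BZ$-nondegenerate one, so $\phi_{\gamma,\zeta}$ may fail to be defined at intermediate stages of the chain of moves and the pentagon identity cannot be invoked termwise. (Even at $k=1$ the earlier paper~\cite{DG} established invariance only for the constant term $\tau_{\gamma,1}$, not for the full series.) Your assessment of the third assertion --- that identifying $\phi_{\gamma,\zeta}$ with the Quantum Modularity series $\phi_{K,\zeta}$ is as deep as the Quantum Modularity Conjecture itself --- is accurate and is exactly the paper's position.
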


Note that if $F_k$ does not contain a primitive third root of unity (for 
example, if $F$ is sufficiently generic and $3\nmid k$) then topological 
invariance of $(\tau_{k})^{12 k}$ together with Theorem \ref{thm.1loop} 
implies topological invariance of $(\tau_k)^{4k}$.

%
%
%
%




\section{Proofs}
\label{sec.proofs}

\subsection{Proof of Theorem~\ref{thm.1}}
\label{sub.proof.thm1}

First, we need to prove that the summation in Equation~\eqref{eq.tau} is
well-defined over the set $(\BZ/k\BZ)^N$. This was observed in~\cite{GZ1}
and uses the fact that $\th^k$ is a solution to the Neumann-Zagier equations.

\begin{lemma}
\label{lem.periodic}\cite{GZ1}
The summation in Equation~\eqref{eq.tau} is $k$-periodic.
\end{lemma}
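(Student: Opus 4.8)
The plan is to prove the slightly more precise statement that the summand $a_m(\theta)$ of \eqref{eq.tau}, a priori a function of $m\in\BZ_{\geq0}^N$, is unchanged when any single coordinate $m_j$ is replaced by $m_j+k$; since the remaining factors in \eqref{eq.tau} (the determinant, the cyclic dilogarithms $D_k^*$, and $k^{-N/2}$) do not involve $m$, this is exactly what it means for the sum to descend to $(\BZ/k\BZ)^N$. Two structural inputs will carry the argument. First, $\BZ$-nondegeneracy gives $\det\mb B=\pm1$, hence $\mb B^{-1}\in\mathrm{GL}(N,\BZ)$; therefore $S:=\mb B^{-1}\mb A$ is an \emph{integral} matrix, symmetric by Neumann--Zagier, and $r:=\mb B^{-1}\nu\in\BZ^N$. (This integrality is precisely what makes the half-integer powers of $\zeta$ appearing in $a_m$ harmless.) Second, $\theta_i^k=z_i$, and applying $\mb B^{-1}$ to the gluing equation $z^{\mb A}(z'')^{\mb B}=(-1)^\nu$ puts it in the form
\be
z_j''=(-1)^{r_j}\prod_{i=1}^N z_i^{-S_{ij}}\qquad(j=1,\dots,N),
\ee
where we used the symmetry $S_{ij}=S_{ji}$.

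Next I would track, factor by factor, what $a_m(\theta)$ acquires under $m\mapsto m+k\,e_j$ (with $e_j$ the $j$-th unit vector), writing $T:=Sm\in\BZ^N$. The Pochhammer $q$-symbol obeys $(x;\zeta)_{n+k}=(x;\zeta)_n(1-x^k)$, because $\zeta^s$ for $s=0,\dots,k-1$ runs over all $k$-th roots of unity; with $x=\zeta\theta_j^{-1}$, so $x^k=z_j^{-1}$, this multiplies the denominator of $a_m$ by $1-z_j^{-1}=z_j''$. The monomial $\prod_i\theta_i^{-(Sm)_i}$ gets multiplied by $\prod_i\theta_i^{-kS_{ij}}=\prod_i z_i^{-S_{ij}}$. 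In the exponential prefactor, $e^{-i\pi m\cdot Sm}$ is multiplied by $e^{-i\pi(2kT_j+k^2S_{jj})}=(-1)^{kS_{jj}}$ (using $kT_j\in\BZ$ and $k^2\equiv k\pmod 2$), while $\zeta^{\frac12 m\cdot Sm}=e^{\frac{i\pi}{k}m\cdot Sm}$ is multiplied by $e^{i\pi(2T_j+kS_{jj})}=(-1)^{kS_{jj}}$, so these two contributions cancel; and $\zeta^{\frac12 m\cdot r}$ is multiplied by $\zeta^{\frac k2 r_j}=(-1)^{r_j}$. Collecting the surviving factors, $a_m(\theta)$ is multiplied by
\be
(-1)^{r_j}\Big(\prod_{i=1}^N z_i^{-S_{ij}}\Big)\frac{1}{z_j''}=1,
\ee
the last equality being the displayed gluing equation. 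Hence $a_{m+k e_j}(\theta)=a_m(\theta)$ for every $j$, so $a_m(\theta)$ descends to a function on $(\BZ/k\BZ)^N$ and the sum in \eqref{eq.tau} is well-defined.

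I do not anticipate a genuine obstacle: the computation is entirely mechanical once the two structural facts are in place. The only points that require care are (i) invoking $\det\mb B=\pm1$ to secure the integrality of $S$ and $r$ — this is exactly where the $\BZ$-nondegeneracy hypothesis enters, and it is what prevents the half-integer exponents of $\zeta$ from spoiling periodicity — and (ii) recognizing the leftover $(-1)^{r_j}\prod_i z_i^{-S_{ij}}$ as $z_j''$ via the correct rearrangement of the gluing equations. One might also remark in passing that $a_m(\theta)$ is finite, since $(\zeta\theta_i^{-1};\zeta)_{m_i}=\prod_{\ell=0}^{m_i-1}(1-\zeta^{\ell+1}\theta_i^{-1})$ vanishes only if $\theta_i=\zeta^{\ell+1}$ for some $\ell$, which would force $z_i=\theta_i^k=1$, contradicting $z_i\in\BC\setminus\{0,1\}$.
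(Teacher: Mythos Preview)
Your proof is correct and follows exactly the route the paper sketches: show $a_{m+ke_j}(\theta)=a_m(\theta)$ by tracking each factor and then invoking the Neumann--Zagier equations in the form $z_j''=(-1)^{r_j}\prod_i z_i^{-S_{ij}}$. You simply make explicit the computations the paper omits, and you derive the Pochhammer periodicity $(x;\zeta)_{n+k}=(x;\zeta)_n(1-x^k)$ directly rather than appealing to Lemma~\ref{lem.cyclic}.
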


\begin{proof}
We need to show that
$$
a_{m+k e_j}(\theta)=a_m(\theta) \,.
$$
This follows from the definition of $a_m(\th)$ given in 
Equation~\eqref{eq.amth}, the fact that $z$ is a solution to
the Neumann-Zagier equations, and Lemma~\ref{lem.cyclic} for the cyclic 
dilogarithm.
\end{proof}

To prove Theorem~\ref{thm.1}, recall the Galois extension $F_{G,k}/F_k$, 
and the element $a_m(\th)$ of $F_{G,k}$. Let $\sigma_j$ denote the 
$j$-th generator of the Galois group from Equation~\eqref{eq.sigma}, and let 
\be
\e_j(\th)=a_{e_j}(\th)^{-1} \,.
\ee
The next lemma was observed in~\cite{GZ1}.

\begin{lemma}
\label{lem.asigma}\cite{GZ1}
For all $m$ we have: 
\be
\frac{a_{m}(\sigma_j\th)}{a_{m+e_j}(\th)} = \e_j(\th) \,.
\ee
\end{lemma}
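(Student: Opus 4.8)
The plan is a short direct computation straight from the definition~\eqref{eq.amth}. Since the datum is $\BZ$-nondegenerate we have $\mb B^{-1}\in\mathrm{GL}(N,\BZ)$, so $M:=\mb B^{-1}\mb A$ is a symmetric \emph{integer} matrix; in particular every exponent $(Mm)_i$ in~\eqref{eq.amth} is an integer and $\th_i^{-(Mm)_i}$ is unambiguous. Write $a_m(\th)=c_m\prod_{i=1}^N\th_i^{-(Mm)_i}/(\zeta\th_i^{-1};\zeta)_{m_i}$ with $c_m=e^{-i\pi\,m\cdot Mm}\zeta^{\frac12[m\cdot Mm+m\cdot\mb B^{-1}\nu]}$ independent of $\th$. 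Since $\sigma_j$ fixes $\zeta$ and the scalar $c_m$ and acts by $\th_i\mapsto\zeta^{-\delta_{i,j}}\th_i$, computing $a_m(\sigma_j\th)$ reduces to substituting $\zeta^{-\delta_{i,j}}\th_i$ for $\th_i$: the monomial acquires a factor $\zeta^{(Mm)_j}$, and the only non-formal step is the behaviour of the finite cyclic Pochhammer under the shift $\th_j^{-1}\mapsto\zeta\th_j^{-1}$, namely the one-line telescoping $(\zeta\,\sigma_j\th_j^{-1};\zeta)_{m_j}=(\zeta\th_j^{-1};\zeta)_{m_j+1}/(1-\zeta\th_j^{-1})$ (the slots $i\neq j$ are untouched). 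This yields
\be
a_m(\sigma_j\th)=c_m\,\zeta^{(Mm)_j}\,(1-\zeta\th_j^{-1})\,\frac{(\zeta\th_j^{-1};\zeta)_{m_j}}{(\zeta\th_j^{-1};\zeta)_{m_j+1}}\prod_{i=1}^N\frac{\th_i^{-(Mm)_i}}{(\zeta\th_i^{-1};\zeta)_{m_i}}\,.
\ee

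Next I would expand $a_{m+e_j}(\th)$ from the same formula: replacing $m$ by $m+e_j$ shifts the exponent vector by the $j$-th column of $M$, raises only the $j$-th Pochhammer index by $1$, and turns the scalar into $c_{m+e_j}$, which by bilinearity and symmetry of $M$ satisfies $c_m/c_{m+e_j}=e^{2\pi i(Mm)_j}e^{i\pi M_{jj}}\zeta^{-(Mm)_j-\frac12 M_{jj}-\frac12(\mb B^{-1}\nu)_j}$. Dividing the two expressions, every factor with index $i\neq j$ cancels, as do both remaining $j$-th Pochhammers, leaving
\be
\frac{a_m(\sigma_j\th)}{a_{m+e_j}(\th)}=e^{2\pi i(Mm)_j}\,e^{i\pi M_{jj}}\,\zeta^{-\frac12 M_{jj}-\frac12(\mb B^{-1}\nu)_j}\,(1-\zeta\th_j^{-1})\prod_{i=1}^N\th_i^{M_{ij}}\,.
\ee
Because $M$ is integral, $(Mm)_j\in\BZ$, so $e^{2\pi i(Mm)_j}=1$ and the right-hand side no longer depends on $m$. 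Finally, evaluating~\eqref{eq.amth} at $m=e_j$ gives $a_{e_j}(\th)=c_{e_j}(1-\zeta\th_j^{-1})^{-1}\prod_i\th_i^{-M_{ij}}$ with $c_{e_j}=e^{-i\pi M_{jj}}\zeta^{\frac12[M_{jj}+(\mb B^{-1}\nu)_j]}$, so the displayed quantity is exactly $a_{e_j}(\th)^{-1}=\e_j(\th)$, which is the claim.

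There is no serious obstacle here. The two points that need care are (i) the double appeal to $\BZ$-nondegeneracy --- once so that $\th_i^{-(Mm)_i}$ is well defined, and once, crucially, to kill the parasitic factor $e^{2\pi i(Mm)_j}$ --- and (ii) getting the cyclic-Pochhammer shift and the $\th$-independent prefactor $c_m$ exactly right; the remainder is bookkeeping with bilinear forms. As a consistency check, iterating the identity $k$ times and using $\sigma_j^k=\mathrm{id}$ together with the $k$-periodicity $a_{m+ke_j}(\th)=a_m(\th)$ of Lemma~\ref{lem.periodic} forces $\prod_{l=0}^{k-1}\e_j(\sigma_j^l\th)=1$, a norm relation of the type underlying Theorem~\ref{thm.1loop}.
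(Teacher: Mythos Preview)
Your proof is correct and follows essentially the same approach as the paper: both compute the quotient $a_m(\sigma_j\th)/a_{m+e_j}(\th)$ directly from~\eqref{eq.amth} and show it is independent of $m$, whence it equals its value at $m=0$, namely $a_{e_j}(\th)^{-1}=\e_j(\th)$. Your organization---separating the $\th$-independent scalar $c_m$ from the $\th$-dependent product, and explicitly invoking integrality of $M=\mb B^{-1}\mb A$ (from $\BZ$-nondegeneracy) to kill the factor $e^{2\pi i(Mm)_j}$---is a bit tidier than the paper's four-fraction decomposition, but the computation is the same.
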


\begin{proof}
It suffices to show that the left hand side of the above equation is
independent of $m$, since the right hand side is the value at $m=0$.
To prove this claim, we compute
\begin{align*}
\frac{a_{m}(\sigma_j\th)}{a_{m+e_j}(\th)} &=
\frac{e^{\pm \pi i \mb B^{-1}\nu \cdot m}}{
e^{\pm \pi i \mb B^{-1}\nu \cdot (m+e_j)}} \cdot
\frac{\zeta^{\frac12 \left(m^T\mb B^{-1}\mb A m \pm \mb B^{-1}\nu \cdot m\right)}}{
\zeta^{\frac12 \left((m+e_j)^T\mb B^{-1}\mb A (m+e_j) \pm \mb B^{-1}\nu \cdot (m+e_j)\right)}} 
\cdot \\ 
& \quad \prod_{i=1}^N \frac{(\sigma_j \th_i)^{-(\mb B^{-1} \mb A m)_i}}{
\th_i^{-(\mb B^{-1} \mb A (m+e_j))_i}} \cdot \prod_{i=1}^N
\frac{(\zeta \th_i^{-1};\zeta )_{m_i+\delta_{i,j}}}{
(\zeta \sigma_j \th_i^{-1};\zeta )_{m_i}}\,.
\end{align*}
We focus on the $m$-dependent part of each of the 4 fractions.
Obviously, 
$$
\frac{e^{\pm \pi i \mb B^{-1}\nu \cdot m}}{
e^{\pm \pi i \mb B^{-1}\nu \cdot (m+e_j)}} =
\left(\text{term independent of $m$}\right) \,.
$$
Next,
$$
\frac{\zeta^{\frac12 \left(m^T\mb B^{-1}\mb A m \pm \mb B^{-1}\nu 
\cdot m\right)}}{
\zeta^{\frac12 \left((m+e_j)^T\mb B^{-1}\mb A (m+e_j) \pm \mb B^{-1}\nu 
\cdot (m+e_j)\right)}} = \zeta^{-(\mb B^{-1}\mb A m)_j} \times 
\left(\text{term independent of $m$}\right).
$$
Splitting the product of the third fraction to the case when $j \neq i$
and the case when $j=i$ implies that
$$
\prod_{i=1}^N \frac{(\sigma_j \th_i)^{-(\mb B^{-1} \mb A m)_i}}{
\th_i^{-(\mb B^{-1} \mb A (m+e_j))_i}} 
= \zeta^{(\mb B^{-1}\mb A m)_j} \times 
\left(\text{term independent of $m$}\right) \,.
$$
Using the identity 
$$
\frac{(\zeta \th_j^{-1};\zeta)_{m_j+1}}{(\zeta^2 \th_j^{-1};\zeta)_{m_j}}=
1-\zeta \th_j^{-1} \,,
$$
and splitting the product of the fourth fraction to the case when $j \neq i$
and the case when $j=i$ implies that
$$
\frac{(\zeta \th_i^{-1};\zeta )_{m_i+\delta_{i,j}}}{
(\zeta \sigma_j \th_i^{-1};\zeta )_{m_i}} =
\left(\text{term independent of $m$}\right) \,.
$$
This completes the proof of the lemma.
\end{proof}

Using the fact that
the sum is $m$-periodic it follows that
\be
\label{eq.fs1}
\sigma_j \left( \sum_{m \in (\BZ/k\BZ)^N} a_m(\th) \right)= e_j(\th)
\sum_{m \in (\BZ/k\BZ)^N} a_m(\th)\,.
\ee
Suppose that
$f(m,\th)$ is a rational function of $\th$ with the property that 
\be
\label{eq.fksigma}
f(m,\sigma_j\th)=f(m+e_j,\th)
\ee
for all $j=1,\dots,N$ and all $m$. Then, it follows that for all $m$ we have
$$
a_m(\sigma_j \th) f(m,\sigma_j\th) = \e_j(\th) a_{m+e_j}(\th) f(m+e_j,\th) \,.
$$
Summing up, we obtain that 
\be
\label{eq.fs2}
\sigma_j \left( \sum_{m \in (\BZ/k\BZ)^N} a_m(\th) f(m,\th) \right)= \e_j(\th)
\sum_{m \in (\BZ/k\BZ)^N} a_m(\th) f(m,\th) \,.
\ee
Equations~\eqref{eq.fs1} and \eqref{eq.fs2} and the fact that $F_{G,k}/F_k$
is a Galois extension imply that if $f$ satisfies 
Equation~\eqref{eq.fksigma}, then 
$\Av(f(-,\theta))\in F_k$.

Now observe that the vertex weights of the Feynman diagrams are 
$\BQ(\zeta)$-linear combinations of values of the polylogarithm 
$\Li_l(\zeta^{m_i+s}\th_i^{-1})$ for $l <0$. Since the polylogarithm
is a rational function, and $\zeta^{m_i}\th_i^{-1}$ satisfies 
Equation~\eqref{eq.fksigma} for all $i$ and $j$, Theorem~\ref{thm.1} follows.
\qed

\subsection{Some identities of the cyclic dilogarithm}
\label{sub.cyclicdilog}

\begin{lemma}
\label{lem.cyclic}
We have:
\begin{subequations}
\begin{align}
\label{eq.cd1}
D_k(\zeta x) &= D_k(x) \frac{(1-x)^k}{1-x^k} \\
\label{eq.cd2}
D^*_k(\zeta^{-1} x) &= D^*_k(x) \frac{(1-x)^k}{1-x^k} \\
\label{eq.cd3}
D_k^*(x) & =\frac{(1-x^k)^{k-1}}{D_k(\zeta x)} \\
\label{eq.cd4}
D_k(x)   & = e^{\frac{2 \pi i}{3} (k^2-1)}
x^{\frac{k(k-1)}{2}} D^*_k(1/x)
\end{align}
\end{subequations}
\end{lemma}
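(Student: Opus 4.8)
The plan is to prove the four identities of Lemma~\ref{lem.cyclic} by direct manipulation of the finite product defining $D_k(x)=\prod_{s=1}^{k-1}(1-\zeta^s x)^s$, using only elementary reindexing of the product together with the factorization $1-x^k=\prod_{s=0}^{k-1}(1-\zeta^s x)$. No deep input is needed; the content is bookkeeping of exponents modulo $k$.

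For \eqref{eq.cd1}, I would write $D_k(\zeta x)=\prod_{s=1}^{k-1}(1-\zeta^{s+1}x)^s$ and substitute $t=s+1$, so the product runs over $t=2,\dots,k$ with exponent $t-1$. The $t=k$ factor is $(1-\zeta^k x)^{k-1}=(1-x)^{k-1}$; the remaining factors $\prod_{t=2}^{k-1}(1-\zeta^t x)^{t-1}$ should be compared to $D_k(x)=\prod_{s=1}^{k-1}(1-\zeta^s x)^s$. Taking the ratio $D_k(\zeta x)/D_k(x)$ and collecting the exponent of each $(1-\zeta^s x)$ for $s=1,\dots,k-1$, one finds a uniform shift of $-1$ in the exponent, i.e. $D_k(\zeta x)/D_k(x)=(1-x)^{k-1}\cdot\prod_{s=1}^{k-1}(1-\zeta^s x)^{-1}$; since $\prod_{s=1}^{k-1}(1-\zeta^s x)=(1-x^k)/(1-x)$, this gives exactly $(1-x)^k/(1-x^k)$. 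Identity \eqref{eq.cd2} is the same computation with $\zeta$ replaced by $\zeta^{-1}$ throughout, which is how $D^*_k$ is defined.

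For \eqref{eq.cd3} I would combine \eqref{eq.cd1} with the observation that $D_k(x)D^*_k(x)=\prod_{s=1}^{k-1}\big((1-\zeta^s x)(1-\zeta^{-s}x)\big)^s$, and that pairing $s\leftrightarrow k-s$ in the definition of $D_k(x)$ shows $D_k^*(x)=\prod_{s=1}^{k-1}(1-\zeta^s x)^{k-s}$; hence $D_k(x)D^*_k(x)=\prod_{s=1}^{k-1}(1-\zeta^s x)^k=\big((1-x^k)/(1-x)\big)^k$. Alternatively, and more cleanly, multiply \eqref{eq.cd3} through by $D_k(\zeta x)$ and use \eqref{eq.cd1} to reduce it to the identity $D_k^*(x)D_k(x)=(1-x^k)^{k-1}(1-x)/(1-x)=\dots$, which again is the pairing computation; I would present whichever chain is shortest. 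Finally, for \eqref{eq.cd4} I would use $D^*_k(1/x)=\prod_{s=1}^{k-1}(1-\zeta^{-s}/x)^s=\prod_{s=1}^{k-1}\big(x^{-1}(x-\zeta^{-s})\big)^s=x^{-\sum_{s=1}^{k-1}s}\prod_{s=1}^{k-1}(x-\zeta^{-s})^s$; then $x-\zeta^{-s}=-\zeta^{-s}(1-\zeta^s x)$, so $\prod_s(x-\zeta^{-s})^s=(-1)^{\sum s}\zeta^{-\sum s^2}\prod_s(1-\zeta^s x)^s=(-1)^{k(k-1)/2}\zeta^{-\sum_{s=1}^{k-1}s^2}D_k(x)$. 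Since $\sum_{s=1}^{k-1}s=k(k-1)/2$ and $\sum_{s=1}^{k-1}s^2=(k-1)k(2k-1)/6$, collecting the scalar prefactor $x^{k(k-1)/2}(-1)^{k(k-1)/2}\zeta^{-(k-1)k(2k-1)/6}$ and checking it equals $e^{2\pi i(k^2-1)/3}$ (working out $\zeta^{-(k-1)k(2k-1)/6}=e^{-\pi i(k-1)(2k-1)/3}$ and combining with $(-1)^{k(k-1)/2}$ modulo the relevant roots of unity) yields the stated constant.

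**The main obstacle** is purely the phase/root-of-unity bookkeeping in \eqref{eq.cd4}: one must verify that the combination of $(-1)^{k(k-1)/2}$ with $\zeta^{-\sum s^2}$ collapses to $e^{2\pi i(k^2-1)/3}$ for all $k$, which requires a careful case analysis of $k$ modulo $6$ (or a uniform argument via $\sum_{s=1}^{k-1}s^2\equiv$ something mod $3k$). The other three identities are essentially immediate once the two elementary facts $D_k^*(x)=\prod_{s=1}^{k-1}(1-\zeta^s x)^{k-s}$ and $\prod_{s=1}^{k-1}(1-\zeta^s x)=(1-x^k)/(1-x)$ are in hand; I would state these as a preliminary observation and then dispatch \eqref{eq.cd1}--\eqref{eq.cd3} in a few lines each.
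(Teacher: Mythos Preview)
Your proposal is correct and follows essentially the same approach as the paper: the paper also calls \eqref{eq.cd1}--\eqref{eq.cd2} straightforward, derives \eqref{eq.cd3} from the reindexing $D_k^*(x)=\prod_{s=1}^{k-1}(1-\zeta^s x)^{k-s}$ together with \eqref{eq.cd1}, and obtains \eqref{eq.cd4} by factoring $1-\zeta^s x=(-\zeta^s x)(1-\zeta^{-s}x^{-1})$, which is your manipulation run in the opposite direction. One minor slip: when you invert to express $D_k(x)$ in terms of $D_k^*(1/x)$, the exponent on $\zeta$ becomes $+\sum_{s=1}^{k-1}s^2$, not $-\sum_{s=1}^{k-1}s^2$.
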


\begin{proof} Parts (a) and (b) are straightforward.
For~\eqref{eq.cd3}, use
$$
D^*_k(x)=\prod_{s=1}^{k-1}(1-\zeta^{-s}x)^s
=\prod_{s=1}^{k-1}(1-\zeta^{s-k}x)^{k-s}
=\frac{\left(\prod_{s=1}^{k-1}(1-\zeta^{s}x)\right)^k}{
\prod_{s=1}^{k-1}(1-\zeta^{s}x)^{s}} = \left(\frac{1-x^k}{1-x}\right)^k 
\frac{1}{D_k(x)}
$$
and then apply~\eqref{eq.cd1}. For~\eqref{eq.cd4}, use
$$
D_k(x)=\prod_{s=1}^{k-1}(1-\zeta^{s}x)^s= \prod_{s=1}^{k-1} (-\zeta x)^{s}
(1- \zeta^{-s} x^{-1})^s = e^{\frac{2 \pi i}{3} (k^2-1)}
x^{\frac{k(k-1)}{2}} D^*_k(1/x)
\,.
$$
\end{proof}

\subsection{Proof of Theorem~\ref{thm.1loop}}
\label{sub.proof.1loop}

Let us define
\begin{align*}
S(\th) & =\sum_{m\in (\Z/k\Z)^N} a_m(\th) \\
P(\th) & = \prod_{i=1}^N D_k^*(\th_i^{-1})^{1/k} \,.
\end{align*}
To begin with, we have $S(\th), P^k(\th) \in F_{G,k}$.
If $\sigma_j$ is the $j$-th generator of the Galois group of $F_{G,k}/F_k$
then Equation~\eqref{eq.fs1} implies that 
$$
\sigma_j S(\th) = \e_j(\th) S(\th) \,.
$$
We claim that
\be
\label{eq.fpk}
\sigma_j P^k(\th) = \e_j(\th)^{-k} P^k(\th) \,.
\ee
Combined, they show that $S^k(\th) P^k(\th) \in F_k$ which implies 
Theorem~\ref{thm.1loop}. To prove Equation~\eqref{eq.fpk}, we 
separate the product when $i \neq j$ and when $i=j$ as follows:
\begin{align*}
\frac{\sigma_j P^k(\th)}{P^k(\th)} &=\left(\prod_{i=1,i\neq j}^N
\frac{D_k^*(\sigma_j \th_i^{-1})}{D_k^*(\th_i^{-1})} \right)\cdot
\frac{D_k^*(\sigma_j \th_j^{-1})}{D_k^*(\th_j^{-1})} \\ 
&= \frac{D_k^*(\zeta \th_j^{-1})}{D_k^*(\th_j^{-1})} \\
&= \frac{1-z^{-1}_j}{(1-\zeta \th_j^{-1})^k} \qquad 
\text{by Equation~\eqref{eq.cd2}} \,.
\end{align*}
On the other hand, $\th^k=z$ satisfies the Neumann-Zagier equations 
$$
z^{\mb A} z''^{\mb B} = (-1)^{\nu}
$$
Using the fact that $B$ is unimodular, we can write the above equations
in the form
$$
z''=(-1)^{\mb B^{-1} \nu} z^{-\mb B^{-1} \mb A} \,.
$$
In other words, for all $j=1,\dots,N$ we have
$$
1-z_j^{-1} = \prod_{i=1}^N z_i^{- (\mb B^{-1} \mb A e_j)_i} \,.
$$
Combining with the above, and the definition of $\epsilon_j(\th)$, 
concludes the proof of Equation~\eqref{eq.fpk}.
\qed


\section{Complex Chern-Simons theory}
\label{sec.CCS}

In this section we review in brief some of the physics of complex Chern-Simons 
theory, discuss the limits related to the Quantum Modularity Conjecture, and 
explain how to derive the definition of the series $\phi_{\gamma,\zeta}$ from 
Section~\ref{sec.def}.

\subsection{Basic structure}

Chern-Simons theory with complex gauge group $G_\C$ (where $G$ is a compact
Lie group) was initially studied by Witten in~\cite{Witten-CSgrav, Witten-cx}. 
It is a topological quantum field theory 
in three-dimensions, whose action is a sum of holomorphic and antiholomorphic 
copies of the usual Chern-Simons action
\be 
S(\calA, \ol\calA) = \frac{t}{8\pi} \,
I_{CS}(\calA) + \frac{\tilde t}{8\pi}\, I_{CS}(\ol\calA)\,,  
\ee
where $\calA$ is a connection on a $G_\C$ bundle over a 3-manifold $M$, 
and $I_{CS}(\calA) = \int_M (\calA d\calA+\frac23 \calA^3)$ (with additional 
boundary terms if $M$ is not closed). In order for the path-integral measure 
$\exp(-iS)$ to be invariant under all gauge transformations of $\calA$, the 
levels $t,\tilde t$ must obey the quantization condition
\be k:= \tfrac12(t + \tilde t)  \in \Z\,. \ee
Additionally, the theory is unitary for 
$\sigma:=\frac12(t-\tilde t)\in i\BR$, and less obviously so for 
$\sigma\in \BR$. We will not require unitarity in the following, however.

The classical solutions of Chern-Simons theory are flat $G_\C$ connections. 
Indeed, in the limit $t,\tilde t\to \infty$, which corresponds to infinitely 
weak coupling, the partition function 
$\CZ(t,\tilde t) = \int D\calA\, D\ol{\calA}\,e^{-iS}$ is dominated by flat 
connections 
\begin{align} 
\mathcal Z(t,\tilde t) &\sim \sum_{\text{flat $\calA^*$}} \tau(\calA^*)
\exp\bigg[\frac{t}{8\pi i}I_{CS}(\calA^*) + \d(\calA^*)\log t 
+ \sum_{n=2}^\infty \bigg(\frac{8\pi i}{t}\bigg)^{n-1} S_n(\calA^*)\bigg] \\
 &\hspace{.8in} \times 
 \tau(\ol{\calA}^*)
\exp\bigg[\frac{\tilde t}{8\pi i}I_{CS}(\ol{\calA}^*) + \d(\ol{\calA^*})\log t 
+ \sum_{n=2}^\infty \bigg(\frac{8\pi i}{\tilde t}\bigg)^{n-1} 
S_n(\ol{\calA}^*)\bigg]
\notag\end{align}
where $\tau(\calA)^{-2}$ is a Ray-Singer torsion twisted by the flat 
connection $\calA$ and the $S_n$ are ``higher-loop'' topological invariants.
For $G_\C=SL(2,\C)$ and $\calA^*$ the hyperbolic flat connection,
such an asymptotic expansion at weak coupling played a central role in the 
generalized Volume Conjecture \cite{Gu}.

\subsection{A singular limit}
\label{sub.singular.limit}

At present we are interested in a very different limit in complex 
Chern-Simons theory, namely $(t,\tilde t) \to (2k,0)$, or equivalently 
$\sigma \to k$ with $k\in \Z$ held fixed. This is a singular limit rather 
than a weak coupling limit. We propose
\begin{conjecture}
\label{conj.CS}
In the limit $(t,\tilde t) \to (2k,0)$, the partition function of complex 
Chern-Simons theory has an asymptotic expansion
\be 
\label{CSexp} 
\mathcal Z \sim \sum_{\text{flat $\calA^*$}} 
\tau_k(\calA^*) \exp\bigg[\frac1{k\hbar}I_{CS}(\calA^*) 
+ \d(\calA^*)\log(k\hbar) + \sum_{n=2}^\infty \hbar^{n-1} 
S_{n,k}(\calA^*)\bigg]\,, 
\ee
where $\hbar = 2\pi i\, \tilde t/t= 2\pi i(\frac{k-\sigma}{k+\sigma})$ and
 $I_{CS}$ is the holomorphic classical Chern-Simons action. Moreover, if 
$M=S^3\backslash K$ is a hyperbolic knot complement, $G_\C=SL(2,\C)$, and 
$\calA^*$ is the hyperbolic flat connection on $M$, then 
$\d(\calA^*)=-\frac32$ and
the series in the Quantum Modularity Conjecture \eqref{eq.QMF} at 
$\zeta = e^{2\pi i\alpha} = e^{2\pi i/k}$ is
\be 
\label{CSmod} 
\phi_{K,\zeta}(\hbar) = \tau_k (\calA^*) 
\exp\bigg[\sum_{n=2}^\infty \hbar^{n-1} S_{n,k}(\calA^*)\bigg]\,.
\ee
\end{conjecture}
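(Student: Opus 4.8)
The plan is to \emph{derive} the right-hand side of \eqref{CSexp}--\eqref{CSmod} from the state-integral model for $SL(2,\C)$ Chern--Simons theory at level $k$ attached to the triangulation $\calT$, and then to match the output with the analytically continued Kashaev invariant. First I would recall (following the construction underlying \cite{DG} and its level-$k$ refinement) that each ideal tetrahedron contributes a ``level-$k$ quantum dilogarithm'' which factorizes into a cyclic piece built from $D_k,D_k^*$ and a Faddeev-type non-compact piece; gluing $N$ such blocks according to $(\mb A,\mb B,\nu)$ and the flattening $(f,f'')$ produces a finite-dimensional partition function of the schematic form $\CZ_{\calT,k}(\hbar)=\sum_{m\in(\Z/k\Z)^N}\int_{\BR^N}dx\,\prod_i(\text{block}_i)$, in which the discrete sum over $m$ is precisely the sum over cyclic sectors and the Gaussian part of the integrand is governed by the Neumann--Zagier matrix $-\mb B^{-1}\mb A+\Delta_{z'}$ evaluated at the geometric shapes $z$.

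Next I would carry out a stationary-phase analysis. In the variables of Section~\ref{sec.CCS} the singular limit $\sigma\to k$ is the limit $\hbar\to0$ at fixed $k$, so $\CZ_{\calT,k}$ is amenable to a saddle-point expansion whose critical points are the solutions of Thurston's gluing equations, i.e.\ the flat connections $\calA^*$. Expanding around the geometric solution: the leading exponential reproduces $\exp\!\big(\tfrac1{k\hbar}I_{CS}(\calA^*)\big)$ with $I_{CS}$ the complex volume; the overall power of $\hbar$ from the Gaussian normalization together with the parabolic (cusp) equation gives the $\delta(\calA^*)\log(k\hbar)$ term with $\delta=-3/2$; and the one-loop fluctuation determinant, together with the cyclic prefactor $\prod_iD_k^*(\th_i^{-1})^{1/k}$ and the $k^{-N/2}$ coming from the discrete sum, assembles into $\tau_{\gamma,k}$ of Definition~\ref{def.1loop}. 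The higher-order terms are then read off by Feynman perturbation theory around the Gaussian: the propagator is $\Pi=\hbar k(-\mb B^{-1}\mb A+\Delta_{z'})^{-1}$, and the vertices are the Taylor coefficients in $x$ of $\log\psi_\hbar$, whose expansion is exactly \eqref{eq.block}, producing the polylogarithm-at-non-positive-integer vertex factors $\Gamma^{(j)}$; the residual sum over $m$ in the cyclic sectors is what becomes the average $\Av(\cdot)$ of \eqref{eq.Avf}. By Lemma~\ref{lem.phih} this reassembles into $\phi^+_{\gamma,\zeta}(\hbar)$, so that $\CZ_{\calT,k}\sim\tau_{\gamma,k}\,\phi^+_{\gamma,\zeta}(\hbar)\,e^{\frac1{k\hbar}I_{CS}+\delta\log(k\hbar)}$, which is the claimed right-hand side.

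The final step is to identify $\CZ_{\calT,k}$, in the scaling $\hbar=2\pi i/(cX+d)$, with $J^0_K\!\big(\tfrac{aX+b}{cX+d}\big)$. Here I would use (i) the expected Andersen--Kashaev / ``$3d$-$3d$'' statement that the level-$k$ state integral computes the $SL(2,\C)$ Chern--Simons partition function and hence the relevant WRT-type invariant, and (ii) the modular transformation law of Faddeev's quantum dilogarithm under $SL(2,\Z)$, which converts the asymptotics near $\zeta=e^{2\pi i a/c}$ into the asymptotics near $\zeta=1$ that the state integral manifestly controls; the constant-term factorization \eqref{eq.phi0factor} would then follow from the Galois / $S$-unit structure of $\tau_{\gamma,k}^{2k}$ established in Theorem~\ref{thm.1loop} and Remark~\ref{rem.Sunit}, matched against the unit of \cite{calegari}. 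The hard part will be exactly these two inputs: giving a rigorous contour and convergence statement for the level-$k$ state integral when $k>1$ and upgrading the formal saddle-point expansion to a genuine (Borel-type) asymptotic series past the poles of the quantum dilogarithm; and, far more seriously, proving that this state integral equals the Kashaev invariant, which is essentially the Andersen--Kashaev volume conjecture and is presently known only for $4_1$ \cite{GZ1}. Absent the latter, the realistic target is the internal-consistency version: that the series \eqref{CSmod}, as \emph{defined} in Section~\ref{sec.def}, has the arithmetic shape predicted by the Quantum Modularity Conjecture --- which is the content of Theorems~\ref{thm.1loop} and~\ref{thm.1} --- and reproduces the numerics of \cite{GZ1}.
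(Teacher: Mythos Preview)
The statement you are addressing is a \emph{conjecture}, and the paper does not prove it. There is no ``paper's own proof'' to compare against; Sections~4.2--4.4 give only heuristic physical motivation for why such an expansion should exist, together with a formal saddle-point computation showing that \emph{if} one takes the level-$k$ state integral \eqref{SSk} and expands it formally around the geometric critical point, one recovers precisely the series $\tau_{\gamma,k}\,\phi^+_{\gamma,\zeta}(\hbar)$ defined in Section~\ref{sec.def}. Your first two paragraphs reproduce this motivation faithfully and in the same order as the paper: the level-$k$ tetrahedron block, the $m$-independence of the leading $\hbar^{-1}$ term, the identification of the Hessian with $\frac1k(-\mb B^{-1}\mb A+\Delta_{z'})$, the one-loop determinant assembling into $\tau_{\gamma,k}$, and the Feynman expansion producing the $\Gamma^{(j)}$ vertices and the $\Av$ over cyclic sectors.

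Where you go beyond the paper is your third paragraph, and you are right to flag it as the hard part. The paper makes no attempt to identify the state integral $\CZ_\gamma^{(k)}$ with $J^0_K\big(\tfrac{aX+b}{cX+d}\big)$; it simply posits Conjecture~\ref{conj.CS}, invokes the 3d-3d correspondence and a possible link to electric-magnetic duality as physical evidence, and then supports the identification \eqref{CSmod} by matching the explicitly computed coefficients of $\phi_{\gamma,\zeta}$ against the numerical asymptotics of the Kashaev invariant from~\cite{GZ1}. Your assessment that the obstruction is essentially the Andersen--Kashaev volume conjecture (plus a rigorous asymptotic analysis of the level-$k$ state integral) is accurate, and your fallback --- that what one can actually prove is the arithmetic content of Theorems~\ref{thm.1loop} and~\ref{thm.1} plus numerical agreement --- is exactly the position the paper takes. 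In short: your proposal is not a proof and cannot be one with current technology, but it correctly reconstructs the paper's derivation-as-motivation and correctly names the missing ingredients.
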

Note that, by definition, $I_{CS}(\calA^*)$ already equals the complex 
hyperbolic volume $V$, so the exponential term $\exp(\frac{1}{k\hbar}V)$ 
already matches on the right hand side of Equations~\eqref{CSexp} and 
\eqref{eq.QMF}.

The existence of the expansion \eqref{CSexp} is physically far from obvious. 
One explanation for \eqref{CSexp} comes from
the so-called 3d-3d correspondence \cite{Yamazaki-3d, DGG}. 
An extension of the original correspondence  relates 
Chern-Simons theory at level $k$ on $M$ to the supersymmetric partition 
function of an associated 3d $\mathcal N=2$ theory $T[M]$ on a lens space  
$L(k,1)\simeq S^3/\Z_k$ \cite{CJ, D-levelk}. The lens space is a $\Z_k$ 
orbifold of a 
sphere, whose geometry has been ellipsoidally deformed such that the ratio 
of minimum to maximum radii is $b=\sqrt{\tilde t/t} = 
\sqrt{\frac{k-\sigma}{k+\sigma}}$. It is well known that as $b\to 0$ the 
partition function of $T[M]$ on a sphere $L(1,1)\simeq S^3$ has an expansion 
of the form \eqref{CSexp}, whose leading exponential term is 
$\frac1\hbar I_{CS}(\calA^*)$, see~\cite{DG-Sdual, Yamazaki-3d, DGG}.
One then expects the $L(k,1)$ partition function to have a similar expansion 
as $b\to 0$, with leading term $\frac1{k\hbar} I_{CS}(\calA^*)$, just as in  
\eqref{CSexp}.

There are also some preliminary hints that the 
existence and structure of \eqref{CSexp} may be explained using 
electric-magnetic duality in four-dimensional Yang-Mills theory, with 
Chern-Simons theory on its boundary along the lines of \cite{Witten-q, Witten-5}. 
Indeed, the electric-magnetic duality group $SL(2,\mathbb Z)$ can relate a 
singular limit such as $(t,\tilde t)\to(2k,0)$ to a more standard 
weak-coupling limit. Electric-magnetic duality has been linked to modular 
phenomena in the past \cite{VafaWitten}, and it is tempting to believe that 
it could provide a physical basis for Quantum Modularity as well.
We aim to explore this further in the future.

\subsection{State integrals}

Complex Chern-Simons theory has not yet been made mathematically rigorous 
as a full TQFT, in contrast to Chern-Simons theory with a compact gauge
group~\cite{Witten-jones} and the Reshetikhin-Turaev construction~\cite{RT}. 
Nevertheless, there exist state-integral 
models, based on ideal triangulations, that provide a definition of 
complex Chern-Simons partition functions for a certain class of 3-manifolds 
\cite{D-levelk, AK}. These state-integral models generalize earlier work 
\cite{Hikami, DGLZ, Dimofte-QRS, AK-TQFT, AK-new} that computed Chern-Simons 
partition functions at level $k=1$.

In the present paper, we use the asymptotic expansion of these state 
integrals in the limit $(t,\tilde t)\to(2k,0)$ to motivate the
definition of the power series $\phi_{\gamma,\zeta}(\hbar)$ given in 
Section~\ref{sec.def}.

Before we discuss the state-integral $\CZ_\gamma$ associated to a 
Neumann-Zagier datum $\ga$ of an ideal triangulation, it is worth mentioning
that convergence of the state-integral  requires certain positivity 
assumptions, which are satisfied when the ideal triangulation supports a 
strict angle structure. This is discussed at length in \cite{AK, D-levelk}. 
In the rest of this section, we will assume that the background ideal 
triangulation admits such a structure. Although positivity is required for 
the convergence of the state-integral, the formula that we will obtain for 
its asymptotic expansion makes sense without any positivity assumptions.

It was shown in \cite{Dimofte-QRS, DG} that a Neumann-Zagier datum
$\gamma = (\mb A,\mb B,\nu,z,f,f'')$ 
with non-degenerate matrix $\mb B$ leads to a state-integral partition 
function for $\SL(2,\BC)$ Chern-Simons at level $k=1$, given by
\begin{align} 
\CZ_\gamma&= \frac{1}{\sqrt{\det\mb B}}(i)^{f\mb B^{-1}\nu}
  e^{\frac1k(\frac\hbar8-\frac{\pi^2}{8\hbar})f\mb B^{-1}\nu} \int \frac{d^NZ}{
(-2\pi i \hbar)^{\frac N2}}e^{-\frac1{2k}(1+\frac{2\pi i}{\hbar})Z\mb B^{-1}\nu
+\frac1{2k\hbar}Z\mb B^{-1}\mb AZ} \prod_{i=1}^N\CZ_\hbar[\Delta](Z_i) \,,
\label{SS1}
\end{align}
where the integral runs over some mid-dimensional contour in the space 
$\BC^N$ parametrized by $Z=(Z_1,...,Z_N)$, and $\CZ_\hbar[\Delta](Z_i)$ is a 
quantum dilogarithm function associated to every tetrahedron, given for 
$\Re(\hbar)<0$ by
\be 
\CZ_\hbar[\Delta](Z_i) = \prod_{r=0}^\infty 
\frac{1-e^{(r+1)\hbar-Z_i}}{1-e^{r \frac{4\pi^2}{\hbar})
-\frac{2\pi i}{\hbar}Z_i}}  = \frac{(e^{Z_i+\hbar},
e^\hbar)_\infty}{(e^{\frac{2\pi i}{\hbar}Z_i},
e^{\frac{4\pi^2}{\hbar}})_\infty}\,.
\ee
This function has an asymptotic expansion as $\hbar\to 0$,
\begin{align}  
\CZ_\hbar[\Delta](Z_i) &\sim (e^{Z+\hbar},e^\hbar)_\infty \sim \exp\sum_{n=0}^\infty 
\frac{\hbar^{n-1}}{n!} B_n(1)\Li_{2-n}(e^{-Z}) \\
& \hspace{.3in}  = \frac1\hbar \Li_2(e^{-Z}) + \frac12\Li_1(e^{-Z}) 
+ \frac\hbar{12}\Li_0(e^{-Z}) + \ldots\,.
\notag
\end{align}
Using $\Li_2(e^{-(Z^*+\delta Z)}) = \Li_2(e^{-Z^*}) 
+ \log(1-e^{-Z^*})\delta Z + ...$, it
follows that at leading order in $\hbar$, the integrand of \eqref{SS1} has 
critical points at
\be 
\label{CScrit} 
\mb A Z^* +\mb B\log(1-e^{-Z^*}) = i\pi \nu\,, 
\ee
which are a logarithmic version of the gluing equations. In particular, 
if $\gamma$ is a positive Neumann-Zagier datum, then the equations are 
satisfied by $Z^*_i=\log (z_i)$ for all $i=1,\dots,N$. By performing formal 
Gaussian integration around this ``geometric'' critical point, order by 
order in the formal parameter $\hbar$, we obtained in \cite{DG} a 
diagrammatic formula for the series $\phi_{K,1}(\hbar)$.

The actual contour of integration appropriate for \eqref{SS1} and its 
level-$k$ generalization has been carefully described in 
\cite{AK-TQFT, D-levelk, AK}. We emphasize, however, that in order to 
perform a formal perturbative expansion around a given critical point, 
a choice of contour is irrelevant.

The level-$k$ generalization of the state integral, as developed in 
\cite{D-levelk}, reads
\begin{align} 
\CZ_\gamma^{(k)}&= \frac{1}{k^N\sqrt{\det\mb B}}\zeta^{\frac14f\mb B^{-1}\nu}
  e^{\frac1k(\frac\hbar8-\frac{\pi^2}{8\hbar})f\mb B^{-1}\nu}  \notag\\
  &\hspace{-.3cm}\times \hspace{-.2cm} \sum_{m\in(\BZ/k\BZ)^N} 
\int \frac{d^NZ}{(-2\pi i \hbar)^{\frac N2}}
(-\zeta^{\frac12})^{m\mb B^{-1}\mb A m} 
e^{-\frac1{2k}(1+\frac{2\pi i}{\hbar})Z\mb B^{-1}\nu+\frac1{2k\hbar}Z\mb B^{-1}\mb AZ} 
\prod_{i=1}^N \CZ_\hbar^{(k)}[\Delta](Z_i,m_i)
\,,\hspace{-.5cm}\label{SSk}
\end{align}
where $\zeta=e^{\frac{2\pi i}{k}}$ as usual, and $(-\zeta^{\frac12})^C$ is 
understood as $e^{(\frac{i\pi}{k}-i\pi)C}$ for any $C$; and
\be 
\CZ_\hbar^{(k)}[\Delta](Z_i,m_i)  = 
\frac{\big(\zeta^{m_i+1}e^{\frac\hbar k-\frac {Z_i}k};\zeta 
e^{\frac\hbar k}\big)_\infty}
{\big(\zeta^{-m_i}e^{-\frac{2\pi i}{k\hbar}{Z_i}};\zeta^{-1}
e^{\frac{4\pi^2}{k\hbar}}\big)_\infty}
\; =\hspace{-.2cm}
 \prod_{\mbox{$0\leq s,t< k \atop  s-t\,\equiv\, m_i\,(\text{mod}\,k)$}} 
\hspace{-.5cm}\CZ_\hbar[\Delta]\Big(\frac {Z_i}k+ \hbar\frac s k 
+2\pi i \frac tk\Big)\,.   
\ee
Recall some well-known facts about the asymptotic expansion of the
quantum dilogarithm that can be found, for instance, 
in~\cite[Sec. II.D]{Za:dilogarithm}.

\begin{lemma}
\label{lem.AQDL}
We have:
$$
\log \, (q x;q)_\infty = -\sum_{n=1}^\infty \frac{q^n x^n}{n(1-q^n)} \sim
\sum_{n=0}^\infty \hbar^{n-1} \frac{(-1)^n B_n}{n!} {\rm Li}_{2-n}(x)
$$
when $q=e^\hbar$ and $\hbar \rightarrow 0$,
where $B_n$ is the $n$-th Bernoulli number and ${\rm Li}_n(x)=\sum_{k=1}^\infty
x^k/n^k$ is the $n$-th polylogarithm. Since 
$x \partial_x {\rm Li}_{2-n}(x)={\rm Li}_{2-n-1}(x)$, it follows that 
$$
\log \, (e^\hbar e^{-(u + w)}; e^\hbar)_\infty \sim 
\exp\left( \sum_{n,k=0}^\infty \hbar^{n-1} 
\frac{(-1)^{n+k} B_n}{n!k!} {\rm Li}_{2-n-k}(e^{-u})w^k
\right)
$$
\end{lemma}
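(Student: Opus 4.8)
The plan is to prove the two displayed assertions of the lemma separately, reducing each to elementary power-series algebra plus one standard analytic input.

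First I would expand the $q$-Pochhammer symbol. For $|q|<1$ and $|x|$ small,
\[
\log\,(qx;q)_\infty \;=\; \sum_{m\geq 0}\log\bigl(1-q^{m+1}x\bigr)\;=\; -\sum_{m\geq 0}\sum_{n\geq 1}\frac{q^{(m+1)n}x^n}{n}\;=\;-\sum_{n\geq 1}\frac{x^n}{n}\,\frac{q^n}{1-q^n},
\]
where every rearrangement is justified by absolute convergence; this gives the first equality and extends to the natural analytic domain by analytic continuation.

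Next I would put $q=e^{\hbar}$ and insert the Bernoulli generating function $t/(e^t-1)=\sum_{j\geq 0}B_j t^j/j!$ at $t=-n\hbar$:
\[
\frac{q^n}{1-q^n}\;=\;\frac{1}{e^{-n\hbar}-1}\;=\;-\frac{1}{n\hbar}\cdot\frac{-n\hbar}{e^{-n\hbar}-1}\;=\;\sum_{j\geq 0}\frac{(-1)^{j-1}B_j\,n^{j-1}\hbar^{j-1}}{j!}.
\]
Substituting this and interchanging the order of summation gives
\[
-\sum_{n\geq 1}\frac{x^n}{n}\,\frac{q^n}{1-q^n}\;=\;\sum_{j\geq 0}\frac{(-1)^{j}B_j}{j!}\,\hbar^{j-1}\sum_{n\geq 1}\frac{x^n}{n^{2-j}}\;=\;\sum_{j\geq 0}\frac{(-1)^{j}B_j}{j!}\,\hbar^{j-1}\,\Li_{2-j}(x),
\]
which is the claimed expansion. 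The one genuine subtlety --- and the step I expect to be the main obstacle --- is that the double sum over $(n,j)$ is not absolutely summable, so the interchange is legitimate only in the sense of an asymptotic expansion as $\hbar\to 0$; this is the standard output of applying the Euler--Maclaurin summation formula to $\sum_{m\geq 0}\log(1-e^{(m+1)\hbar}x)$ (equivalently, a Mellin-transform estimate on the Lerch-type series $\sum_n q^n x^n/(n(1-q^n))$), and I would simply cite \cite[Sec.~II.D]{Za:dilogarithm} for it rather than reprove it.

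Finally, for the two-variable statement I would set $x=e^{-(u+w)}$, so that $\log\,(e^{\hbar}e^{-(u+w)};e^{\hbar})_\infty\sim\sum_{n\geq 0}\tfrac{(-1)^nB_n}{n!}\hbar^{n-1}\Li_{2-n}(e^{-(u+w)})$. From $x\,\partial_x\Li_s(x)=\Li_{s-1}(x)$ I get $\partial_w\Li_s(e^{-u-w})=-\Li_{s-1}(e^{-u-w})$, hence $\partial_w^k\Li_s(e^{-u-w})\big|_{w=0}=(-1)^k\Li_{s-k}(e^{-u})$, and Taylor expansion in $w$ yields $\Li_{2-n}(e^{-(u+w)})=\sum_{k\geq 0}\tfrac{(-1)^k}{k!}\Li_{2-n-k}(e^{-u})\,w^k$. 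Substituting this into the one-variable expansion and collecting powers of $\hbar$ and $w$ gives
\[
\log\,(e^{\hbar}e^{-(u+w)};e^{\hbar})_\infty\;\sim\;\sum_{n,k\geq 0}\frac{(-1)^{n+k}B_n}{n!\,k!}\,\hbar^{n-1}\,\Li_{2-n-k}(e^{-u})\,w^k,
\]
which is the asserted identity (in exponentiated form if one prefers to state it for the product $(e^\hbar e^{-(u+w)};e^\hbar)_\infty$).
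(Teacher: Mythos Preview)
Your proposal is correct and follows exactly the standard derivation the paper has in mind; indeed the paper does not prove this lemma at all but merely cites \cite[Sec.~II.D]{Za:dilogarithm} for it, and your argument (expand the log, insert the Bernoulli generating function, justify the interchange via Euler--Maclaurin, then Taylor-expand in $w$ using $x\partial_x\Li_s=\Li_{s-1}$) is precisely that standard proof. Your closing parenthetical also correctly flags the harmless typo in the second display, where the $\log$ on the left should be dropped (or the $\exp$ on the right removed).
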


The asymptotic expansion of $\CZ_\hbar^{(k)}$ follows from its product 
representation,
\begin{align} 
\label{psiexp}
\CZ_\hbar^{(k)}[\Delta](Z_i,m_i)  &\sim \prod_{s=0}^{k-1} 
(\zeta^{m_i-s}e^{(1-\frac sk)\hbar-\frac1kZ_i};e^\hbar)_\infty  \notag \\ &
\sim \exp\sum_{n=0}^\infty\sum_{s=1}^{k} \frac{\hbar^{n-1}}{n!}B_n
\big(\tfrac sk\big) \Li_{2-n}(\zeta^{m_i+s}e^{-Z_i/k}) \\ 
& =  \exp\bigg[\frac1{k\,\hbar}\Li_2(e^{-Z_i}) - \sum_{s=1}^{k}
\Big(\frac12-\frac sk\Big)\Li_1(\zeta^{m_i+s}e^{-Z_i/k}) + \ldots\bigg]\, \notag
\end{align}
(where we have substituted $s$ by $k-s$ in the second sum).
Notably, the leading asymptotic $\frac1k\Li_2(e^{-Z_i})$ is independent of 
$m_i$. Indeed, this remains true for the entire integrand in \eqref{SSk}.

The critical points $Z^*$ of the integrand at order $\hbar^{-1}$ simply 
satisfy the standard gluing equation \eqref{CScrit}. Let us assume that the 
Neumann-Zagier has all $z_i$ strictly in the upper half-plane and focus on 
the geometric critical point $Z^* = \log(z)$. The value of the integrand at 
the critical point, at order $\hbar^{-1}$, then becomes (after some 
manipulation)
\be 
\exp\frac1 {k\,\hbar}\bigg[-\frac12(Z^*-i\pi f)\cdot((Z^*)''+i\pi f'') 
+ \sum_{i=1}^N \Li_2(e^{-Z_i^*})\bigg]\,, \label{CSvol} 
\ee
where $(Z^*)'' = \log(1-e^{-Z^*}) = \log z''$.
The quantity \eqref{CSvol} appears to agree with the complex hyperbolic 
volume of a manifold $M$ with Neumann-Zagier datum $\gamma$, modulo 
$\pi^2/6$ \cite{DG}, though knowing this is unnecessary for obtaining the 
series $\phi_{\gamma,\zeta}$. On the other hand, it is crucial for 
our computation that the value at the leading-order saddle point is 
independent of $m$ --- so all terms in the sum over $m$ contribute equally 
to the higher-order asymptotics.

By using \eqref{psiexp}, or (better) the double series expansion around the 
critical point $Z^*$,
\be 
\CZ_\hbar^{(k)}[\Delta](Z_i^*+\delta Z_i,m_i) \sim \exp \sum_{n=0}^\infty
\sum_{j=0}^\infty \frac{\hbar^{n-1}(-\delta Z_i)^j}{k^j\,n! j!} 
\sum_{s=1}^k B_n\big(\tfrac sk\big) \Li_{2-n-j}(\zeta^{m_i+s}e^{-Z_i^*/k})\, 
\ee
a saddle-point approximation or formal Gaussian integration of \eqref{SSk} 
leads immediately to the definition of $\phi_{\gamma,\zeta}(\hbar)$ in Section 
\ref{sec.def}. Indeed, in the finite-dimensional Feynman calculus, the 
propagator $\Pi$ is the inverse of the Hessian matrix, appearing at order 
$\hbar^{-1}$ in the exponent of \eqref{SSk} as 
$\frac1k \delta Z\cdot(-\mb B^{-1}\mb A+\Delta_{(z^*)'})\cdot \delta Z$; 
while each vertex factor $\Gamma_i^{(j)}$ is the coefficient of 
$(\delta Z_i)^j$ in the exponent of \eqref{SSk}.

\subsection{Derivation of the torsion}
\label{sub.derivation.torsion}

To illustrate how the formal Gaussian integration works, let us derive the 
$k$-twisted torsion or ``1-loop invariant'' $\tau_{\gamma,k}$ of 
\eqref{eq.tau}, starting from \eqref{SSk}. Let us set 
$\zeta=e^{\frac{2\pi i}{k}}$ and $\theta_i = z_i^{1/k} = e^{Z_i^*/k}$ as usual, 
and work at fixed $m\in (\BZ/k \BZ)^N$ to start.

There are two contributions to the torsion. First there is the integrand 
itself, evaluated at $Z=Z^*$, keeping only terms of order $\hbar^0$ in the 
exponent:
\be  
\frac{\zeta^{\frac14 f\mb B^{-1}\nu}}{k^N (-2\pi i \hbar)^{\frac N2}
\sqrt{\det \mb B}} \times (-\zeta^{\frac12})^{m\mb B^{-1}\mb Am}
e^{-\frac1{2k}Z^*\mb B^{-1}\nu} \prod_{i=1}^N \prod_{s=1}^{k} 
(1-\zeta^{m_i+s}e^{-Z_i^*/k})^{\frac12-\frac sk} \,, 
\ee
where we have used that $\Li_1(x) = -\log(1-x)$. Second, there is the 
determinant of the Hessian, coming from the leading-order Gaussian integration,
\be \big[\det  \tfrac{1}{2\pi k\hbar}
(-\mb B^{-1}\mb A + \Delta_z)\big]^{-\frac12}\,. 
\ee
Combining these terms, using $\mb Af+\mb Bf''=\nu$ and 
$\mb AZ^*+\mb B(Z^*)''=i\pi \nu$ to rewrite $Z^*\mb B^{-1}\nu$ as 
$Z^*\cdot f'' - (Z^*)''\cdot f+i\pi f\mb B^{-1}\nu$, and observing that 
$\prod_{i=1}^N\prod_{s=1}^k(1-\zeta^{m_i+s}e^{-Z_i^*/k})^{\frac12} 
= \prod_{i=1}^N (1-e^{-Z_i^*})^{\frac12} = (\det \Delta_{z''})^{\frac12}$,
we arrive at
\be 
\tau_{\gamma,k} = \frac{1}{(ik)^{\frac N2}
\sqrt{\det(\mb A\Delta_z''+\mb B\Delta_{z'}^{-1})z^{f''/k}z''{}^{f/k}}} 
(-\zeta^{\frac12})^{m\mb B^{-1}\mb Am}
\prod_{i=1}^N \prod_{s=0}^{k-1} (1-\zeta^{m_i-s}\theta_i^{-1})^{s/k}\,.
\ee
The product may be manipulated further using
\begin{align*} & \prod_{s=0}^{k-1}(1-\zeta^{m_i-s}\theta_i^{-1})^{\frac sk} 
= \prod_{s=-m_i}^{k-1-m_i}(1-\zeta^{-s}\theta_i^{-1})^{\frac {s+m_i}{k}}  \\
&\hspace{.8in}
= z_i''{}^{\frac {m_i}{k}} \prod_{s=k-m_i}^{k-1}
(1-\zeta^{-s}\theta_i^{-1})^{\frac sk-1}\prod_{s=0}^{k+m_i}
(1-\zeta^{-s}\theta_i^{-1})^{\frac sk} = z_i''{}^{\frac {m_i}{k}} 
D_k^*(\theta_i^{-1})^{\frac 1k} (\zeta\theta_i^{-1};\zeta)_{m_i}^{-1}\,.
\end{align*}
Finally, setting 
$\prod_i z_i''{}^{\frac {m_i}k} = \exp[\frac1k (Z^*)''\cdot m] 
= \exp[\frac {i\pi}k m\mb B^{-1}\nu-\frac 1k m\mb B^{-1}\mb AZ]
=\zeta^{\frac12 m\mb B^{-1}\nu}\theta^{-\mb B^{-1}\mb Am}$,
and summing the whole expression over $m\in (\BZ/k\BZ)^N$, we recover 
\eqref{eq.tau}.

\subsection{Ambiguities}
\label{sub.ambiguities}

The state integral \eqref{SSk} has an intrinsic multiplicative ambiguity 
\cite[Eqn. 5.8]{D-levelk}. Namely, it is only defined modulo multiplication 
by factors (at worst) of the form
\be  
\big(e^{\tfrac{\pi^2}{6k\hbar}}\big)^{a_1}  
\big(\zeta^{\frac{1}{24}}\big)^{a_2} 
\big(e^{\tfrac{\hbar}{24k}}\big)^{a_3}\,,\qquad a_1,a_2,a_3\in \Z\,. 
\ee
The second and third factors affect $\tau_{k}$ and $S_{2,k}$, respectively, 
in the asymptotic expansion. Higher-order terms in the expansion are 
unaffected. These ambiguities in the state integral are consistent with 
those discovered experimentally for $\phi_{\gamma,\zeta}$, as discussed in 
Section \ref{sub.inv}.


\section{Computations}
\label{sec.compute}

\subsection{How the data was computed}
\label{sub.conv}

We use the Rolfsen notation for knots~\cite{Rf}. {\tt SnapPy} computes the
Neumann-Zagier matrices of default ideal triangulations of the knots
below, as well as their exact shapes and trace fields (computed for
instance from the Ptolemy module of {\tt SnapPy})~\cite{ptolemy,snappy}.

Given a Neumann-Zagier datum, the 2 and 3-loop invariants at level $k$
are algebraic numbers, elements of the field $F_{K,k}=F_K(\zeta_k)$, 
where $F_K$ is the trace field of $K$ and $\zeta_k=e^{\frac{2\pi i}{k}}$.
However, these numbers are obtained by sums of algebraic numbers in a 
much larger number field. Moreover, the 1-loop invariant at level $k$
already contains a $k$-th root of elements of $F_{K,k}$. 
This makes exact computations impractical. To produce the
interesting factorization of Equation~\eqref{eq.phi0factor}, and keeping
in mind the ambiguities of Section~\ref{sub.ambiguities} we proceed as
follows. We know that 
$x_{k,\ell}=\tau^k_k/(\tau^k_1 \zeta_{24k}^\ell) \in F_{K,k}$
for some natural number $\ell$. Given this, we compute the numerical value
$x^{\mathrm{num}}_{k,\ell}$ of 
$x_{k,\ell}$ (for several values of $\ell$) and find a value of $\ell$ for which 
there is an element $x^{\mathrm{exact}}_{k,\ell}$ of $F_{K,k}$ which is 
reasonably close to our element. We accomplish this by the LLL
algorithm~\cite{LLL}. The Quantum Modularity Conjecture asserts that the 
exact element of $F_{K,k}$ should to have the form:
\be
\label{eq.xe}
x^{\mathrm{exact}}_{k,\ell} = \varepsilon_{K,k} \beta_{K,k}^k
\ee
for $\varepsilon_{K,k} \in \calO_{F_K(\zeta_k)}^{\times}$ (an algebraic unit)
and $\beta_{K,k} \in F_K(\zeta_k)^\times$. To find $\epsilon_{K,k}$ and 
$\beta_{K,k}$, factor the fractional ideal 
\be
\label{eq.xpi}
x^{\mathrm{exact}}_{k,\ell} \calO_{F_K(\zeta_k)} = 
\mathfrak{p}_1^{e_1} \dots \mathfrak{p}_r^{e_r} 
\ee
into a product of prime ideals $\mathfrak{p}_i$, $i=1,\dots,r$.
If all ramification exponents $e_i$ are divisible by $k$, and if the
prime ideals are principal $\mathfrak{p}_i=(\wp_i)$ for $\wp_i \in
F_{K,k}^\times$ (the latter happens when the ideal class group of 
$F_{K,k}$ is trivial), then we define
$$
\beta_{K,k}=\prod_{i=1}^r \wp_i^{\frac{e_i}{k}} \in F_{K,k}^\times, \qquad
\varepsilon_{K,k}= x^{\mathrm{exact}}_{k,\ell}/\beta_{K,k}^k \,.
$$
It follows that $\varepsilon_{K,k}$ is a unit, and that \eqref{eq.xe}
holds. This gives us strong confidence that $x^{\mathrm{exact}}_{k,\ell}$
is the correct element, and that the computation is correct.

In practice, we have used a {\tt Mathematica} program to compute 
$x^{\mathrm{num}}_{k,\ell}$ and $x^{\mathrm{exact}}_{k,\ell}$, and a {\tt Sage}
program (that uses internally {\tt pari-gp}) to compute the ideal 
factorization \eqref{eq.xpi}.



\subsection{A sample computation}
\label{sub.sample}

Let us illustrate our method of computation in detail with one example, the
$5_2$ knot with $k=7$. $5_2$ is a hyperbolic knot with trace
field $F_{5_2}=\BQ(\a)$ where $\a=0.8774 \dots - 0.7448 \dots i $ is a root of 
$$
x^3 -x^2 + 1 = 0 \,.
$$
$F_{5_2}$ is of type $[1,1]$ with discriminant $-23$. With the notation of
the previous section, and with $\ell=6$, we can numerically compute
(with 500 digits of accuracy) 
$$
x^{\mathrm{num}}_{7,6}=
-235162149.63362564574 \dots - 40898882. 99885002594 \dots i
$$
Fitting with LLL guesses the element of $F_7=F(\zeta_7)$
{\small
\begin{align*}
x^{\mathrm{exact}}_{7,6} &=
 -42626237 - 31168064 \a + 54414583 \a^2 +  
   (3905252 - 48974302 \a + 103510169 \a^2)  \zeta_7 + \\ & 
   (91608760 - 23650188 \a + 97210659 \a^2)  \zeta_7^2 +  
   (158817619 + 22023535 \a + 44886912 \a^2)  \zeta_7^3 - \\ & 
   (-149267670 - 54779388 \a + 17355247 \a^2)  \zeta_7^4 - 
   (-80916790 - 45810663 \a + 37182537 \a^2)  \zeta_7^5
\end{align*}
}
How can we trust this answer? 
We can compute the norm $N(x^{\mathrm{exact}}_{7,6})$ of $x^{\mathrm{exact}}_{7,6}$
(that is, the product of all Galois conjugates) and find out that:
$$
N(x^{\mathrm{exact}}_{7,6}) = 43^{14} \cdot 6007111235971721^7 \,.
$$
It is encouraging that the above norm is the seventh power of an integer. 
But even better is the fact that we can factor the ideal generated by 
the above element as follows:
$$
(x^{\mathrm{exact}}_{7,6})= (\wp_{43})^{14} \cdot (\wp_{6007111235971721})^7
$$
where 
\begin{align*}
\wp_{43} &= (\a - 1) \zeta_7^5 + \a \zeta_7^2 + \a \\
\wp_{6007111235971721} &= 
(4 \a^2 + 6 \a - 7) \zeta_7^5 + 
(5 \a^2 + 4 \a - 3) \zeta_7^4 + 
(8 \a^2 + \a - 8) \zeta_7^3 + \\ & \quad\,\,
(3 \a^2 + 5 \a - 6) \zeta_7^2 + 
(2 \a^2 + \a - 5) \zeta_7 + 6 \a^2 - 2 \a - 2
\end{align*}
are primes of norm $43$ and $6007111235971721$ (a prime number), 
respectively.
If we define $\beta_{7}=\wp_{43}^2 \cdot \wp_{6007111235971721} \in F_7$ and
$\varepsilon_{7}= x^{\mathrm{exact}}_{7,6}/\beta_{7}^7$
it follows that 
$$
x^{\mathrm{exact}}_{7,6} = \varepsilon_{7} \beta_{7}^7
$$
where $\varepsilon_{7} \in F_7^\times$ is a unit, given explicitly by the rather
long expression:
\begin{align*}
\varepsilon_{7} &=
 (318981244103 \a^2 + 40488788528803 \a + 30382313828818) \zeta_7^5 + 
\\ & \quad\,\,
(-52797766935255 \a^2 + 38212176617858 \a + 58931813581928) \zeta_7^4 + 
\\ & \quad\,\,
(-29477571352182 \a^2 - 1263424293533 \a + 15843777055057) \zeta_7^3 + 
\\ & \quad\,\,
(13260713424737 \a^2 + 18581482784028 \a + 6470257562608) \zeta_7^2 + 
\\ & \quad\,\,
(-29079808246903 \a^2 + 49225269181062 \a + 53729902713340) \zeta_7 - 
\\ & \quad\,\,
52974788170701 \a^2 + 15742594165404 \a + 42070901450997  \,.
\end{align*}
This is an answer that we can trust. There is an additional invariance
property of the above unit under the Galois group of $\BQ(\zeta_7)/\BQ$,
discussed in detail in~\cite{calegari}.


\section{Data}
\label{sec.data}

\subsection{The $4_1$ knot}
\label{sub.41}

The $4_1$ knot is the simplest hyperbolic knot with
volume $2.0298\dots$ with $2$ ideal tetrahedra and trace
field $F_{4_1}=\BQ(\a)=\BQ(\sqrt{-3})$ where $\a=e^{2 \pi i/6}$ is a root of 
$$
x^2 - x + 1 = 0\,.
$$
$F_{4_1}$ is of type $[0,1]$ with discriminant $-3$.

The default {\tt SnapPy} triangulation of $4_1$ generates several 
Neumann-Zagier data. Most are $\Z$-nondegenerate; for example 
\be
\ga\,:\quad  
 \mb A = \begin{pmatrix}-2&1\\-1&1\end{pmatrix}\,,\;\;
\mb B= \begin{pmatrix} -1&2\\-1&1 \end{pmatrix}\,,\;\; 
\nu = (0,0)\,,\;\; z = (\a,\a)\,,\;\; f=(0,1)\,,\;\;f''=(1,0)\,
\ee
is $\BZ$-nondegenerate. The 1-loop invariant at $k=1$ and its
norm is given by
\be
\label{eq.1loop41}
\begin{array}{|c|l|l|} \hline
\text{knot} & \tau_1^{-2} & N(\tau_1^{-2}) \\ \hline
4_1 & 2 \a - 1 & 3 \\ \hline
\end{array} 
\ee
The norm of the 1-loop of $4_1$ at level $k$ is given in~\eqref{t.41norm}.
{\small
\be
\label{t.41norm}
\begin{array}{|c|l|} \hline
k & N(\tau_k/\tau_1) \qquad \text{for $4_1$} \\ \hline
1 & 1  \\ \hline 
2 & 3  \\ \hline
4 & 11^2  \\ \hline
5 & 3^4 \cdot 29^2  \\ \hline
7 & 39733^2  \\ \hline 
8 &  3^4 \cdot 383^2 \\ \hline
10 & 19289^2 \\ \hline 
11 & 3^{10} \cdot 463^2 \cdot 128237^2 \\ \hline 
13 & 13339^2 \cdot 13963^2 \cdot 130027^2 \\ \hline 
14 & 3^6 \cdot 419^2 \cdot 4451^2 \\ \hline 
16 & 97^2 \cdot 418140719^2 \\ \hline 
17 & 3^{16} \cdot 170239^2 \cdot 377615549357^2 \\ \hline 
19 & 571^2 \cdot 2851^2 \cdot 27513329^2 \cdot 83702994059^2 \\ \hline 
20 & 3^8 \cdot 59^2 \cdot 975911939^2 \\ \hline 
22 & 131^2 \cdot 14783^2 \cdot 39667^2 \cdot 92927^2 \\ \hline 
\end{array}
\ee
}
In the above table, we avoided the (degenerate) case when $k$ is divisible 
by $3$, since in those cases the trace field contains the third roots of unity.
Notice that the above norms are squares of integers. This exceptional
integrality may be a consequence of the fact that $4_1$ is amphicheiral.

Next, we give some sample computations of the 
factorization~\eqref{eq.phi0factor}. In this and the next sections,
more data has been computed (even for
non-prime levels $k$), but only a sample will be presented here. 
Throughout this section, $\wp_{n}$ will denote a prime
in $\calO_{F_k}$ of norm $n$, a prime power. 

\noindent 
For $k=2$ 
we have
{\small
\begin{align*}
\varepsilon_2 &=\a \\
\beta_2 &= \wp_{3}  \\
\wp_{3} &= 2 \a - 1
\end{align*}
}

\noindent 
For $k=4$ and $\zeta=\zeta_4$ 
we have
{\small
\begin{align*}
\varepsilon_4 &= (-2 \a + 1) \z - 2 \\
\beta_4 &= \wp_{11^2}  \\
\wp_{11^2} &= (-4 \a + 2) \z + 1
\end{align*}
}

\noindent 
For $k=5$ and $\zeta=\zeta_5$ we have
{\small
\begin{align*}
\varepsilon_5 &=
(4 \a - 5) \zeta^3 + (8 \a - 5) \zeta^2 + 7 \a \zeta + 2 \a + 3 \\
\beta_5 &= \wp_{3^3} \cdot \wp_{29^2} \\
\wp_{3^3} &=2 \a - 1 \\
\wp_{29^2} &=-2 \zeta^3 + (-\a - 1) \zeta^2 - \a \zeta + \a - 2
\end{align*}
}

\noindent 
For $k=7$ and $\zeta=\zeta_7$ we have
{\small
\begin{align*}
\varepsilon_7 &=(60 \a + 115) \zeta^5 + (239 \a + 90) \zeta^4 
+ (390 \a - 61) \zeta^3 + (415 \a - 240) \zeta^2 +
\\ & \quad\,\,
 (300 \a - 300) \zeta + 114 \a - 186 \\
\beta_7 &= \wp_{39733,1} \cdot \wp_{39733,2} \\
\wp_{39733,1} &= \a \zeta^5 + (\a - 2) \zeta^4 + (2 \a - 1) \zeta^3  
+ (\a - 2) \zeta^2 + (2 \a - 1) \zeta + 2 \a - 2 \\
\wp_{39733,2} &=\a \zeta^5 - \zeta^4 + \a \zeta^3 - \zeta^2 
+ (2 \a - 1) \zeta + 2 \a - 2
\end{align*}
}

\noindent 
For $k=8$ and $\zeta=\zeta_8$ we have
{\small
\begin{align*}
\varepsilon_8 &=
(-12 \a + 36) \z^3 + (17 \a + 17) \z^2 + (36 \a - 12) \z + 34 \a - 34 \\
\beta_8 &= \wp_{3^2,1} \cdot \wp_{3^2,2} \cdot \wp_{383^2} \\
\wp_{3^2,1} &=\z^3 - \z^2 - \a + 1 \\
\wp_{3^2,2} &= -\z^3 + \a \z + 1 \\
\wp_{383^2} &= (-2 \a + 2) \z^3 + (3 \a - 1) \z^2 + (\a + 1) \z - \a + 3
\end{align*}
}

\noindent 
For $k=10$ and $\zeta=\zeta_{10}$ we have
{\small
\begin{align*}
\varepsilon_{10} &=
(-3 \a + 7) \z^3 + (6 \a - 4) \z^2 + (\a + 1) \z + 9 \a - 10 \\
\beta_{10} &= \wp_{19289^2}  \\
\wp_{19289^2} &= (2 \a - 5) \z^3 + (3 \a + 3) \z^2 + (-\a + 6) \z + 9 \a + 2
\end{align*}
}

\noindent
For $k=11$ and $\zeta=\zeta_{11}$ we have
{\small
\begin{align*}
\varepsilon_{11} &= (-353875255116707 \a + 117872583555117) \zeta^9 +
\\ & \quad\,\,
(-688446384174845 \a + 401155759804328) \zeta^8 +
\\ & \quad\,\,
(-897492189704312 \a + 759910737126284) \zeta^7 +
\\ & \quad\,\,
(-914641448990872 \a + 1080237360407592) \zeta^6 +
\\ & \quad\,\,
(-734446808203694 \a + 1260432001194770) \zeta^5 +
\\ & \quad\,\,
(-414120184922386 \a + 1243282741908210) \zeta^4 +
\\ & \quad\,\,
(-55365207600430 \a + 1034236936378743) \zeta^3 + 
\\ & \quad\,\,
(227917968648781 \a + 699665807320605) \zeta^2 + 
\\ & \quad\,\,
(345790552203898 \a + 345790552203898) \zeta + 
\\ & \quad\,\,
260826355539896 \a + 84964196664002
\\
\beta_{11} &= \wp_{3^5,1} \cdot \wp_{3^5,2} \cdot \wp_{463,1} \cdot \wp_{463,2} 
\cdot \wp_{128237^2}\\
\wp_{3^5,1} &= (\a - 1) \zeta^8 + (\a - 1) \zeta^7 + (\a - 1) \zeta^6 
+ (\a - 1) \zeta^5 - \zeta^4 + (\a - 1) \zeta^2 + (\a - 1) \zeta - 1 
\\
\wp_{3^5,2} &=-\zeta^8 - \a \zeta^5 - \zeta^4 + (\a - 1) \zeta^3 - \zeta + \a - 1
\\
\wp_{463,1} &=(\a - 1) \zeta^9 + \a \zeta^8 + \zeta^7 - \zeta^6 
+ (\a - 1) \zeta^5 + \a \zeta^4 + (\a - 1) \zeta^2 + \a \zeta
\\
\wp_{463,2} &=\a \zeta^8 + \zeta^7 + \a \zeta^5 + \zeta^4 
+ (-\a + 1) \zeta^3 + (\a - 1) \zeta^2 + \a \zeta + 1
\\
\wp_{128237^2} &=2 \a \zeta^9 + 2 \a \zeta^8 + \zeta^6 
+ (-2 \a + 1) \zeta^5 + (-\a - 1) \zeta + \a - 1
\end{align*}
}
Some 2 and 3-loop invariants are shown next. 
{\small
$$
\begin{array}{|c|l|} \hline
k & S_{2,k} \qquad \text{for $4_1$} \\ \hline
1 & (-10 + 11 \a)/108 \\ \hline
2 & (-25 + 41 \a)/216 \\ \hline
3 & (-20 + 37 \a)/108 \\ \hline
4 & (-977 + 1855 \a)/4752 + (5 \zeta)/44 \\ \hline
5 & (-14482 + 37559 \a)/78300 + (11 \zeta)/87 - (
 2 (-133 + 11 \a) \zeta^2)/2175 + ((31 - 22 \a) \zeta^3)/2175 \\ \hline
\end{array}
$$
$$
\begin{array}{|c|l|} \hline
k & S_{3,k} \qquad \text{for $4_1$} \\ \hline
1 & -1/54 \\ \hline
2 & -19/216 \\ \hline
3 & -401/1944 \\ \hline
4 & -17783/52272 + 347 (-1 + 2 \a) \zeta/23232 \\ \hline
5 & (-1569081 + 48037 \a)/2838375 + 
48037 (-1 + 2 \a) \zeta/2838375 + 
\\ & 
(-64041 + 64472 \a) \zeta^2/1892250 + 
(-94781 - 1268 \a) \zeta^3/5676750
\\ \hline
\end{array}
$$
}

\subsection{The $5_2$ knot and its partner, the $(-2,3,7)$ pretzel knot}
\label{sub.52}

The $5_2$ knot is a hyperbolic knot with
volume $2.8281\dots$ with $3$ ideal tetrahedra and trace
field $F_{5_2}=\BQ(\a)$ where $\a=0.8774 \dots - 0.7448 \dots i $ is a root of 
$$
x^3 -x^2 + 1 = 0
$$
$F_{5_2}$ is of type $[1,1]$ with discriminant $-23$.

The (mirror image of) the $(-2,3,7)$ pretzel knot is a hyperbolic knot same
volume and trace field as the $5_2$ knot. In fact, the complements of the
two knots can be obtained from the same triple of ideal tetrahedra with
two different face pairing rules. So, we will use $\a$ and $F$ as in 
Section~\ref{sub.52}. The 1-loop invariant at $k=1$ and its norm is given by

\be
\label{eq.1loop52}
\begin{array}{|c|l|l|} \hline
\text{knot} & \tau_1^{-2} & N(\tau_1^{-2}) \\ \hline
5_2 & 3 \a - 2 & - 23 \\ \hline
(-2,3,7) & -6 \a^2 + 10 \a - 4 & - 2^3 \cdot 23 \\ \hline
\end{array} 
\ee
The norm of the 1-loop of the $5_2$ and $(-2,3,7)$ pretzel knots
at level $k$ is given in~\eqref{t.52norm} and~\eqref{t.237norm}
respectively.
{\tiny
\be
\label{t.52norm}
\begin{array}{|c|l|} \hline
k & N(\tau_k/\tau_1) \qquad \text{for $5_2$} \\ \hline
1 & 1 \\ \hline
2 & 11 \\ \hline
3 & 7^2 \cdot 43 \\ \hline
4 & 21377 \\ \hline
5 & 9491 \cdot 1227271 \\ \hline
6 & 709 \cdot 2689 \\ \hline
7 & 43^2 \cdot 6007111235971721 \\ \hline
8 & 17 \cdot 113 \cdot 7537 \cdot 30133993 \\ \hline
9 & 2083098097 \cdot 85444190599483 \\ \hline
10 & 1811 \cdot 4391 \cdot 68626575961 \\ \hline
11 & 363424007 \cdot 793250477933 \cdot 3103695493140688356241 \\ \hline
12 & 420361 \cdot 5976193 \cdot 119577001 \\ \hline
13 & 3^3 \cdot 35023 \cdot 17090197144904885763873428788615162336954707155761 
\\ \hline
14 & 397951 \cdot 4686537997 \cdot 36383829926671291 \\ \hline
15 & 61 \cdot 271 \cdot 5728621 \cdot 5533526674625134504126790661929671 
\\ \hline
16 & 7^2 \cdot 17 \cdot 6709259559659307953 \cdot 10201336785134943810833 
\\ \hline
17 & 37251471121483 \cdot 478394043550588093915627 \cdot 
\\  &
     680530260143787862026942663543619843463116564673 \\ \hline
18 & 4519 \cdot 8815472623 \cdot 178770985453 \cdot 2913137889913 \\ \hline
19 & 12772437704449 \cdot 3508919046521483041 \cdot 
498973019420515924143242422019287 \cdot 
\\ &
     10714512841561797401872096224519192623 \\ \hline 
20 & 281 \cdot 3821 \cdot 3989122481 \cdot 
3748225906180094225903982496437822401 \\ \hline
21 & 2^6 \cdot 7^2 \cdot 211 \cdot 337 \cdot 913753 \cdot 
2082346663352803 \cdot 
\\ &
     17854362817614367334282334028504194189424575574129 \\ \hline
22 & 11 \cdot 6029 \cdot 54583 \cdot 7275369969656838010303 \cdot 
     8746524744220626866965904589334067 \\ \hline
\end{array}
\ee
\be
\label{t.237norm}
\begin{array}{|c|l|} \hline
k & N(\tau_k/\tau_1) \qquad \text{for $(-2,3,7)$ pretzel} \\ \hline
1 & 1 \\ \hline
2 & 2 \cdot \sqrt{2} \\ \hline
3 & 373 \\ \hline
4 & 2^3 \cdot 373 \\ \hline
5 & 7121 \cdot 7951 \\ \hline
6 & 2^3 \cdot 7 \cdot 7537 \\ \hline
7 & 38543 \cdot 215990584223 \\ \hline
8 & 2^6 \cdot 47389590553 \\ \hline
9 & 19^2 \cdot 109 \cdot 357859 \cdot 3981077803 \\ \hline
10 & 2^6 \cdot 11^2 \cdot 971^2 \cdot 1091 \cdot 1151 \\ \hline
11 & 727 \cdot 2272057394576817291015189643460557 \\ \hline
12 & 2^6 \cdot 20467677759464113 \\ \hline
13 & 937 \cdot 6761 \cdot 160967 \cdot 23955361 \cdot 635301473 \cdot 
57335784304171782943 \\ \hline
14 & 2^9 \cdot 1163 \cdot 89392529932786422898277 \\ \hline
15 & 33137687439067819192706277002439756570331 \\ \hline
16 & 2^{12} \cdot 7^4 \cdot 17 \cdot 5963163273069615265031366100433 \\ \hline
17 & 137 \cdot 82399986307 \cdot 3263165781611 \cdot 
\\ &
     39270783190888798960324268124076297625100114717631 \\ \hline
18 & 2^9 \cdot 19 \cdot 776332747 \cdot 464491149268013810443 \\ \hline
19 & 97553069 \cdot 451234687 \cdot 4511912067991298785435699217959 \cdot 
\\ &
     10780714359892164395007021907819650272965937 \\ \hline
20 & 2^{12} \cdot 101 \cdot 181^2 \cdot 58661 \cdot 1310381 \cdot 
311721147290512745903667881 \\ \hline
21 & 2^6 \cdot 5839 \cdot 295429 \cdot 10289973200263 \cdot 
     168245809559535775760775546501360397248599028829 \\ \hline
22 & 2^{15} \cdot 8532271651199678660022917719747178676450107088703587421
\\ \hline
\end{array}
\ee
}

Next, we give some sample computations of the 
factorization~\eqref{eq.phi0factor}. 

\noindent 
For $k=2$ 
we have for $5_2$
{\small
\begin{align*}
\varepsilon_{2} &= -\a^2 + \a
\\
\beta_{3} &= \wp_{11}  \\
\wp_{11} &= \a^2 + \a - 2
\end{align*}
}
and for $(-2,3,7)$, respectively:
{\small
\begin{align*}
\varepsilon_{2} &=\a + 1 
\\
\beta_{2} &= \wp_{2^3}^{1/2} \\
\wp_{2^3} &=2
\end{align*}
}

\noindent 
For $k=3$ and $\zeta=\zeta_3$ 
we have for $5_2$
{\small
\begin{align*}
\varepsilon_3 &= (-4 \a^2 + 2 \a + 4) \z - 4 \a^2 - \a + 1
\\
\beta_3 &= \wp_{7}^2 \cdot \wp_{43} \\
\wp_{7} &=(-\a^2 + 1) \z - \a^2 + \a
\\
\wp_{43} &=2 \z + \a + 1
\end{align*}
}
and for $(-2,3,7)$, respectively:
{\small
\begin{align*}
\varepsilon_3 &= -\a^2 + 1
\\
\beta_3 &= \wp_{373}  \\
\wp_{373} &= (-2 \a^2 + 2 \a) \z - 2 \a^2 + \a + 2
\end{align*}
}

\noindent 
For $k=4$ and $\zeta=\zeta_4$ we have for $5_2$
{\small
\begin{align*}
\varepsilon_{4} &= -2 \a \z - 2 \a^2 + \a + 1
\\
\beta_{4} &= \wp_{21377}  \\
\wp_{21377} &= (4 \a^2 - 2 \a + 1) \z - 4 \a^2 + 2 \a + 2
\end{align*}
}
and for $(-2,3,7)$, respectively:
{\small
\begin{align*}
\varepsilon_{4} &= (2 \a + 2) \z + 3 \a^2 - 2
\\
\beta_{4} &= \wp_{2^3} \cdot \wp_{373}  \\
\wp_{2^3} &= -\z + 1
\\
\wp_{373} &= (-\a^2 - 2 \a + 1) \z + \a^2 - \a
\end{align*}
}

\noindent 
For $k=5$ and $\zeta=\zeta_5$ we have for $5_2$
{\small
\begin{align*}
\varepsilon_{5} &= 
(-\a^2 + 3 \a) \z^3 + (-2 \a^2 + \a) \z^2 + (2 \a^2 - \a) \z - \a^2 + 2 \a + 1
\\
\beta_{5} &= \wp_{9491} \cdot \wp_{1227271} \\
\wp_{9491} &= (-\a^2 + 2) \z^3 + (-\a^2 + \a + 1) \z^2 + \a
\\
\wp_{1227271} &= (2 \a^2 + 1) \z^3 - \z^2 + (-\a^2 + \a) \z + 1
\end{align*}
}
and for $(-2,3,7)$, respectively:
{\small
\begin{align*}
\varepsilon_{5} &= 
(-5 \a - 4) \z^3 + (10 \a^2 - 10 \a - 3) \z^2 + (20 \a^2 - 10 \a - 2) \z 
+ 12 \a^2 - 5 \a
\\
\beta_{5} &= \wp_{7121} \cdot \wp_{7951} \\
\wp_{7121} &=(-\a^2 + \a - 1) \z^3 + (-\a^2 + \a - 1) \z^2 - \z + 2 \a - 1
\\
\wp_{7951} &=(\a^2 - \a + 1) \z^3 + (\a + 1) \z^2 + (\a^2 - 1) \z + \a^2 + 1
\end{align*}
}

\noindent 
For $k=6$ and $\zeta=\zeta_6$ we have for $5_2$
{\small
\begin{align*}
\varepsilon_{6} &= 
(-24 \a^2 - 12 \a + 4) \z - 6 \a^2 + 24 \a + 21
\\
\beta_{6} &= \wp_{709} \cdot \wp_{2689} \\
\wp_{709} &= (\a + 1) \z - 2 \a^2 - 2
\\
\wp_{2689} &= (\a^2 + 2 \a - 3) \z - 3 \a^2 + \a + 3
\end{align*}
}
and for $(-2,3,7)$, respectively:
{\small
\begin{align*}
\varepsilon_{6} &= 
(\a^2 - 1) \z - \a^2 + 1
\\
\beta_{6} &= \wp_{2^6}^{1/2} \cdot \wp_{7} \cdot \wp_{7357} \\
\wp_{2^6} &= 2
\\
\wp_{7} &= (-\a^2 + \a + 1) \z - 1
\\
\wp_{7357} &= (-2 \a^2 - 3 \a + 3) \z + 2 \a^2 + 1
\end{align*}
}

\noindent 
For $k=7$ and $\zeta=\zeta_7$ we have for $5_2$
{\small
\begin{align*}
\varepsilon_{7} &= 
(318981244103 \a^2 + 40488788528803 \a + 30382313828818) \z^5 +
\\ & \quad\,\,  
(-52797766935255 \a^2 + 38212176617858 \a + 58931813581928) \z^4 + 
\\ & \quad\,\,
(-29477571352182 \a^2 - 1263424293533 \a + 15843777055057) \z^3 + 
\\ & \quad\,\,
(13260713424737 \a^2 + 18581482784028 \a + 6470257562608) \z^2 + 
\\ & \quad\,\,
(-29079808246903 \a^2 + 49225269181062 \a + 53729902713340) \z - 
\\ & \quad\,\,
52974788170701 \a^2 + 15742594165404 \a + 42070901450997
\\
\beta_{7} &= \wp_{43}^2 \cdot \wp_{6007111235971721}  \\
\wp_{43} &=(\a - 1) \z^5 + \a \z^2 + \a
\\
\wp_{6007111235971721} &=
(4 \a^2 + 6 \a - 7) \z^5 + (5 \a^2 + 4 \a - 3) \z^4 + (8 \a^2 + \a - 8) \z^3 +
\\ & \quad\,\, 
(3 \a^2 + 5 \a - 6) \z^2 + (2 \a^2 + \a - 5) \z + 6 \a^2 - 2 \a - 2
\end{align*}
}
and for $(-2,3,7)$, respectively:
{\small
\begin{align*}
\varepsilon_{7} &= 
(349 \a^2 + 119 \a - 176) \z^5 + (439 \a^2 - 196 \a - 450) \z^4 + (60 \a^2 - 189 \a - 143) \z^3 + 
\\ & \quad\,\,
(185 \a^2 + 42 \a + 52) \z^2 + (555 \a^2 - 154 \a - 278) \z + 279 \a^2 - 324 \a - 305
\\
\beta_{7} &= \wp_{38543} \cdot \wp_{215990584223}  \\
\wp_{38543} &=
(-\a^2 - \a + 1) \z^5 + (-\a^2 + 1) \z^4 + (-\a^2 - \a + 1) \z^3 + (-2 \a + 1) \z^2 - \a \z - \a
\\
\wp_{215990584223} &=
(-3 \a^2 - 1) \z^5 + (-5 \a - 3) \z^4 + (2 \a^2 - 3 \a - 3) \z^3 + 
\\ & \quad\,\,
(-\a^2 - 1) \z^2 + (-3 \a^2 - 3 \a + 1) \z - 3 \a - 3
\end{align*}
}

\noindent 
For $k=8$ and $\zeta=\zeta_8$ we have for $5_2$
{\small
\begin{align*}
\varepsilon_{8} &= 
(-41580 \a^2 + 32068 \a + 49052) \z^3 + (-4418 \a^2 + 43620 \a + 32476) \z^2 + 
\\ & \quad\,\,
(35332 \a^2 + 29620 \a - 3124) \z + 54385 \a^2 - 1731 \a - 36894
\\
\beta_{8} &= \wp_{17} \cdot \wp_{113} \cdot \wp_{7537} 
\cdot \wp_{30133993}\\
\wp_{17} &= \z - \a^2
\\
\wp_{113} &=(\a^2 - \a) \z^2 - \z + 1
\\
\wp_{7537} &=(\a - 1) \z^3 + (-\a^2 + \a) \z^2 + \a^2 \z + \a
\\
\wp_{30133993} &=
(\a - 1) \z^3 + (-2 \a^2 + 2 \a - 2) \z^2 + (2 \a^2 + 2 \a - 1) \z
\end{align*}
}
and for $(-2,3,7)$, respectively:
{\small
\begin{align*}
\varepsilon_{8} &= 
(-245132 \a^2 + 447868 \a - 364300) \z^3 + (-888194 \a^2 + 1592676 \a - 1214108) \z^2 + 
\\ & \quad\,\,
(-1010964 \a^2 + 1804516 \a - 1352708) \z - 541525 \a^2 + 959295 \a - 698910
\\
\beta_{8} &= \wp_{2^3}^2 \cdot \wp_{47389590553} \\
\wp_{2^3} &=\z^3 + 1
\\
\wp_{47389590553} &=
(-5 \a^2 + 5 \a) \z^3 + (4 \a^2 - \a) \z^2 + (4 \a^2 + 2 \a - 3) \z + \a^2
\end{align*}
}

\noindent 
For $k=9$ and $\zeta=\zeta_9$ we have for $5_2$
{\small
\begin{align*}
\varepsilon_{9} &= 
(-4941 \a^2 - 5373 \a) \z^5 + (-12105 \a^2 + 5373 \a) \z^4 + (-13605 \a^2 + 13605 \a) \z^3 + 
\\ & \quad\,\,
(-13680 \a^2 + 10098 \a) \z^2 + (-11889 \a^2 + 15471 \a) \z - 4535 \a^2 + 13605 \a
\\
\beta_{9} &= \wp_{2083098097} \cdot \wp_{85444190599483} \\
\wp_{2083098097} &=
2 \a^2 \z^5 + (\a^2 + 2) \z^4 + \a \z^3 + (\a^2 - 1) \z^2 + \a^2 + 1
\\
\wp_{85444190599483} &=
(3 \a^2 - 5 \a + 1) \z^5 - \a \z^4 + (-2 \a^2 - 3 \a + 1) \z^3 + (2 \a^2 - 4 \a + 1) \z^2 + 2 \a^2 \z - 4 \a - 1
\end{align*}
}
and for $(-2,3,7)$, respectively:
{\small
\begin{align*}
\varepsilon_{9} &= 
(331893396 \a^2 + 165165777 \a - 64446273) \z^5 + 
\\ & \quad\,\,
(76625316 \a^2 + 307221984 \a + 188250822) \z^4 + 
\\ & \quad\,\,
(-214496600 \a^2 + 305525612 \a + 352863266) \z^3 + 
\\ & \quad\,\,
(-73359774 \a^2 + 326036187 \a + 287920791) \z^2 + 
\\ & \quad\,\,
(-329761962 \a^2 + 248164137 \a + 375245217) \z - 
\\ & \quad\,\,
431864865 \a^2 + 54173331 \a + 286988238
\\
\beta_{9} &= \wp_{19^2} \cdot \wp_{109} \cdot \wp_{357859}
\cdot \wp_{3981077803} \\
\wp_{19^2} &=
(-3 \a^2 + 2) \z^5 + (-2 \a - 2) \z^4 + (3 \a^2 - \a - 3) \z^3 + 
\\ & \quad\,\,
(-2 \a^2 + 2 \a + 3) \z^2 + (-3 \a^2 + 1) \z + \a^2 - 3 \a - 2
\\
\wp_{109} &= -\a^2 \z^4 - \a^2 \z - \a^2 + \a
\\
\wp_{357859} &=
(-\a^2 + 2 \a) \z^5 + (\a^2 - \a - 1) \z^4 + (-\a^2 + \a + 1) \z^3 + 
\\ & \quad\,\,
(-\a^2 + \a) \z^2 + (\a^2 - \a - 1) \z - \a^2 + 2
\\
\wp_{3981077803} &=
\z^5 + (-2 \a + 2) \z^4 + (\a^2 - \a + 2) \z^3 + \z^2 + (-\a + 2) \z - \a - 1
\end{align*}
}

\noindent 
For $k=10$ and $\zeta=\zeta_{10}$ we have for $5_2$
{\small
\begin{align*}
\varepsilon_{10} &= 
(-3824672997 \a^2 - 3325045215 \a - 330502768) \z^3 + 
\\ & \quad\,\,
(-2263297486 \a^2 + 2676462965 \a + 3310113266) \z^2 + 
\\ & \quad\,\,
(-3762572681 \a^2 - 400845875 \a + 1841500561) \z + 
\\ & \quad\,\,
100480422 \a^2 + 4731453922 \a + 3514375210
\\
\beta_{10} &= \wp_{1811} \cdot \wp_{4391} \cdot \wp_{68626575961} \\
\wp_{1811} &= (\a^2 - \a) \z^3 + \a^2 + 1
\\
\wp_{4391} &= (-\a^2 + \a - 1) \z^3 + \z^2 - \a^2 \z - \a
\\
\wp_{68626575961} &=
(-2 \a^2 + \a + 10) \z^3 + (-\a^2 - 3 \a - 7) \z^2 + (-\a^2 + 2 \a + 5) \z + \a^2 - 3 \a - 5
\end{align*}
}
and for $(-2,3,7)$, respectively:
{\small
\begin{align*}
\varepsilon_{10} &= 
(2069226 \a^2 - 1143696 \a - 2044373) \z^3 + (308318 \a^2 + 1037807 \a + 609316) \z^2 + 
\\ & \quad\,\,
(1469403 \a^2 - 65443 \a - 886914) \z - 970534 \a^2 + 1744650 \a + 1872808
\\
\beta_{10} &= \wp_{2^{12}}^{1/2} \cdot \wp_{11^2} \cdot \wp_{971^2}
\cdot \wp_{1091} \cdot \wp_{1151} \\
\wp_{2^{6}} &= 2
\\
\wp_{11^2} &= (-\a^2 + \a) \z^3 + (-\a^2 + 1) \z + \a^2
\\
\wp_{971^2} &= \a \z^3 + (\a^2 + 2) \z^2 + (\a^2 - 1) \z - \a^2 + \a - 1
\\
\wp_{1091} &= -\a \z^3 + \a \z - \a^2 + \a
\\
\wp_{1151} &= (\a^2 + \a - 1) \z^2 + (-\a^2 + 1) \z + \a - 1
\end{align*}
}

Some 2 and 3-loop invariants for $5_2$ and $(-2,3,7)$ pretzel knots
are shown next.

{\small
$$
\begin{array}{|c|l|} \hline
k & S_{2,k} \qquad \text{for $5_2$} \\ \hline
1 & (245 - 242 \a - 33 \a^2)/2116 \\ \hline
2 & (6295 - 10303 \a - 1314 \a^2)/46552 \\ \hline
3 & (1763029 - 3730884 \a - 616974 \a^2)/11464488 + 
(727 + 40 \a - 52 \a^2) \zeta/6923
\\ \hline
4 & (198755261 - 468329838 \a - 88322976 \a^2)/1085609568 - 
\\
& 
(-144841 - 3059 \a + 3724 \a^2) \zeta/1966684
\\ \hline
5 & (1252389600136849 - 2036921357788788 \a - 
  291646682299854 \a^2)/3697084423961400 + 
\\ 
&
3 (109837198792 - 4170485943 \a + 4920447944 \a^2) \zeta/1339523342015 + 
\\ 
&
3(392592030863 - 20752850276 \a + 
    41177718597 \a^2) \zeta^2/6697616710075 + 
\\ 
&
3(-57107525462 - 19759788121 \a + 42866787232 \a^2) 
\zeta^3/6697616710075
\\ \hline
\end{array}
$$
$$
\begin{array}{|c|l|} \hline
k & S_{3,k} \qquad \text{for $5_2$} \\ \hline
1 & 3(18 - 155 \a + 155 \a^2)/24334\\ \hline
2 & 3(-70769 - 255956 \a + 319945 \a^2)/11777656 \\ \hline
3 & (-1863760571 - 9092540536 \a + 10659951670 \a^2)/59526487818
+
\\ 
&
(581674213 - 725755840 \a - 213728162 \a^2) \zeta/29763243909
\\ \hline
4 & 3(-1447363406795 - 7699225522158 \a + 
    9371835787629 \a^2)/88960456984688 -
\\ 
&
3(-6173681325057 + 7935320251722 \a + 1607053670497 \a^2) 
\zeta)/355841827938752
\\ \hline
5 & 3
(-10989752660217610311084459 -
59081913982949711575555062 \a + 
\\
&
      69563350243075727956792969 \a^2)/412694240274697972779851750 - 
\\
&
3(-4209383365964471973165111 + 
      5860219093140674277853192 \a + 
\\
&
586631030165980383508791 \a^2) 
\zeta/206347120137348986389925875
 + 
\\
&
9 (359260923564919009455273 - 
      2046639621559644326769101 \a + 
\\
&
      171017223371634425264447 \a^2) \zeta^2/412694240274697972779851750 + 
\\
&
3(-6713426920522807160021312 + 
      3867227919717696039743014 \a + 
\\
&
      2476626379634791382781767 \a^2) \zeta^3/412694240274697972779851750
\\ \hline
\end{array}
$$
}

{\small
$$
\begin{array}
{|c|l|} \hline
k & S_{2,k} \qquad \text{for $(-2,3,7)$ pretzel} \\ \hline
1 & (-73 - 1524 \a - 879 \a^2)/25392 \\ \hline
2 & (5213 - 6774 \a + 726 \a^2)/25392 \\ \hline
3 & (6428435 - 7198212 \a - 1601715 \a^2)/28413648 + 
(10598 - 6375 \a + 3506 \a^2) \zeta/51474 \\ \hline
4 & (1772576 - 2698227 \a -   1231152 \a^2)/9471216 + 
(11085 - 4012 \a + 1543 \a^2) \zeta/34316 \\ \hline
5 & (22745305769203 - 23958770711676 \a + 
  1292918125467 \a^2)/35941786270800 + 
\\ 
&  
(4515992099 - 1436127126 \a +     641928216 \a^2) \zeta/13022386330 + 
\\ 
&  
(30141870223 - 17414407586 \a + 
    9676608447 \a^2) \zeta^2/65111931650 + 
\\ 
&  
(14370066463 - 
    15291381996 \a + 9801845647 \a^2) \zeta^3/65111931650 \\ \hline
\end{array}
$$
$$
\begin{array}
{|c|l|} \hline
k & S_{3,k} \qquad \text{for $(-2,3,7)$ pretzel} \\ \hline
1 & (2099 - 2099 \a + 6874 \a^2)/778688 \\ \hline
2 & (-10438 + 8532 \a - 177 \a^2)/389344 \\ \hline
3 & (19141449113 - 148532821745 \a + 
  206516117210 \a^2)/2925128234304 + 
\\ \hline &  
(7427517757 - 10156808752 \a - 
    14120983571 \a^2) \zeta/731282058576 \\ \hline
4 & (181162947 - 969125569 \a + 
  2031947518 \a^2)/13542260344 + 
\\ \hline &  
(2348859343 - 5533235364 \a + 
    628243915 \a^2) \zeta/108338082752 \\ \hline
5 &
(-26527900336733230761869 + 1135819865279935909813 \a + 
\\
&
35348426895458618612714 \a^2)/312031884139098398776000 + 
\\ 
&  
(4131462185677760934998 - 5937844115855778532936 \a - 
\\
&
327921124596643988013 \a^2) \zeta/78007971034774599694000 + 
\\ 
&  
(-3615963624053978498519 + 2059165800970040528333 \a - 
\\
&
6722732999448794955676 \a^2) \zeta^2/78007971034774599694000 + 
\\ 
&  
(-7145964286998284204683 + 6060423272846646989631 \a 
\\
&
- 5019314438337630298992 \a^2) \zeta^3/78007971034774599694000
\\ \hline 
\end{array}
$$
}

\subsection{The $6_1$ knot}
\label{sub.61}

The $6_1$ knot is a hyperbolic knot with
volume $3.1639\dots$ with $4$ ideal tetrahedra and trace
field $F_{6_1}=\BQ(\a)$ where $\a=1.5041\dots - 1.2268\dots i$ is a root of 
$$
x^4 - 2 x^3 + x^2 + 3 x + 1 = 0
$$
$F_{6_1}$ is of type $[0,2]$ with discriminant $257$, a prime. We chose
to give the data for this knot because the Bloch group of its trace
field is a finitely generated abelian group of rank $2$.
The 1-loop invariant at $k=1$ and its norm is given by

\be
\label{eq.1loop61}
\begin{array}{|c|l|l|} \hline
\text{knot} & \tau_1^{-2} & N(\tau_1^{-2}) \\ \hline
6_1 & 7 \a^3 - 17 \a^2 + 17 \a + 12 & 257  \\ \hline
\end{array} 
\ee
The norm of the 1-loop of $6_1$ at level $k$ is given in~\eqref{t.61norm}.
{\tiny
\be
\label{t.61norm}
\begin{array}{|c|l|} \hline
k & N(\tau_k/\tau_1) \qquad \text{for $6_1$} \\ \hline
1 & 1 \\ \hline
2 & 29 \\ \hline
3 & 79 \cdot 373 \\ \hline
4 & 487057 \\ \hline
5 & 401 \cdot 8120801581 \\ \hline
6 & 4969 \cdot 33601 \\ \hline
7 & 2^3 \cdot 19013 \cdot 3957451 \cdot 33546226214089 \\ \hline
8 & 732209 \cdot 85423522285273 \\ \hline
9 & 2^12 \cdot 19^2 \cdot 199^2 \cdot 541 \cdot 12313999 \cdot 39491789023 
\\ \hline
10 & 100981 \cdot 317733001 \cdot 36502384021 \\ \hline
11 & 291418667 \cdot 3515449621583206989038092387793289595509816623 \\ \hline
12 & 157 \cdot 15086917 \cdot 479105929 \cdot 3349280377 \\ \hline
13 & 79 \cdot 117777271 \cdot 5870910773677 \cdot 
     644682638171983561196398860905544937015222089370889 \\ \hline
14 & 2^3 \cdot 1405219181759 \cdot 57474686640618078167230081699 \\ \hline
15 & 31^2 \cdot 2379691 \cdot 63360261033352141 \cdot 
     1042507380808009331327711940605725261 \\ \hline
16 & 196053041 \cdot 21917758321 \cdot 
2943442798173814595177658255884613139633 \\ \hline
18 & 2^6 \cdot 19^2 \cdot 4678492152497445991171 \cdot 
1135119536120342889490177 \\ \hline
20 & 32261 \cdot 500083848464103577816221055593641 \cdot 
     200729720160049090343996502563952161 \\ \hline
21 & 1009 \cdot 538727231341573 \cdot 
\\ &
69679537903457255216788492238561211259581901781788919921647100909271764443148059688148169 \\ \hline
22 & 23 \cdot 1304249 \cdot 17520427 \cdot 35064943 \cdot 662517155967701 \cdot 
     13980312643423978437421727 \cdot 653195100488320873699349233
\\ \hline
\end{array}
\ee
}

\noindent 
For $k=2$ 
we have
{\small
\begin{align*}
\varepsilon_{2} &= -\a^3 + 2 \a^2 - \a - 3
\\
\beta_{2} &= \wp_{29}  \\
\wp_{29} &= -4 \a^3 + 10 \a^2 - 8 \a - 7
\end{align*}
}

\noindent 
For $k=3$ and $\zeta=\zeta_3$
we have
{\small
\begin{align*}
\varepsilon_{3} &= 
(-4 \a^3 + 10 \a^2 - 9 \a - 8) \z - 3 \a^3 + 7 \a^2 - 5 \a - 8
\\
\beta_{3} &= \wp_{79} \cdot \wp_{373} \\
\wp_{79} &= (\a^3 - 2 \a^2 + \a + 2) \z + 3 \a^3 - 7 \a^2 + 5 \a + 6
\\
\wp_{373} &= (-2 \a^3 + 5 \a^2 - 5 \a - 3) \z - 3 \a^3 + 7 \a^2 - 7 \a - 4
\end{align*}
}

\noindent 
For $k=4$ and $\zeta=\zeta_4$ we have
{\small
\begin{align*}
\varepsilon_{4} &= 
(\a^3 - 3 \a^2 + 3 \a) \z + 4 \a^3 - 10 \a^2 + 8 \a + 10
\\
\beta_{4} &= \wp_{487057}  \\
\wp_{487057} &=
(2 \a^3 - 4 \a^2 + 2 \a + 4) \z + \a^3 - \a^2 + 3 \a - 2
\end{align*}
}

\noindent 
For $k=5$ and $\zeta=\zeta_5$ we have
{\small
\begin{align*}
\varepsilon_{5} &= 
(-2 \a^2 + 6 \a + 24) \z^3 + (-20 \a^3 + 56 \a^2 - 38 \a - 12) \z^2 + 
\\ & \quad\,\,
(-40 \a^3 + 114 \a^2 - 92 \a - 68) \z - 18 \a^3 + 56 \a^2 - 49 \a - 40
\\
\beta_{5} &= \wp_{401} \cdot \wp_{8120801581}  \\
\wp_{401} &=
(-\a^3 + 2 \a^2 - \a - 2) \z^3 + \z^2 + (\a^3 - 3 \a^2 + 3 \a + 2) \z
\\
\wp_{8120801581} &=
(\a + 3) \z^3 + (-5 \a^3 + 11 \a^2 - 9 \a - 5) \z^2 + (-2 \a^3 + 3 \a^2 - 3 \a - 1) \z - \a^2 + 4
\end{align*}
}

\noindent 
For $k=6$ and $\zeta=\zeta_6$ we have
{\small
\begin{align*}
\varepsilon_{6} &= 
(3 \a^3 - 8 \a^2 + 7 \a + 5) \z - 3 \a^3 + 7 \a^2 - 5 \a - 6
\\
\beta_{6} &= \wp_{4969} \cdot \wp_{33601}  \\
\wp_{4969} &= (3 \a^3 - 7 \a^2 + 8 \a + 4) \z + 1
\\
\wp_{33601} &= (4 \a^3 - 10 \a^2 + 8 \a + 5) \z + \a^3 - 3 \a^2 + 4 \a + 2
\end{align*}
}

Some 2 and 3-loop invariants are shown next. 
{\tiny
$$
\begin{array}{|c|l|} \hline
k & S_{2,k} \qquad \text{for $6_1$} \\ \hline
1 & (-178515 - 946382 \a + 924836 \a^2 - 371920 \a^3)/1585176 \\ \hline
2 & (-27011582 - 51129989 \a + 48845639 \a^2 - 19497370 \a^3)/45970104 \\ \hline
3 & (-82893368809 - 117384982993 \a + 115430695442 \a^2 - 
  47280180216 \a^3)/70065571788 + 
\\ & (1706191 - 1154600 \a + 
    2708170 \a^2 - 1385605 \a^3) \zeta/22719057 \\ \hline
4 & (1/3088284268128)(-4950930619209 - 7026286049126 \a + 
   7165813225694 \a^2 - 
\\ &
   2954092842556 \a^3) + ((84879497 - 121998463 \a + 149562867 \a^2 - 
\\ &
    55782966 \a^3) \zeta)/500694596 \\ \hline
5 & -103464360336910543873 - 188649056185634232247 \a + 
    173804121553360109686 \a^2 -
\\ &
 66971292202517629952 \a^3)/
  64525410081903320700 + (2813833153341350 - 
\\ & 
62305691106986 \a - 
      557364703389415 \a^2 + 377823446091675 \a^3) \zeta/
  4184527242665585 + 
\\ & 
(13304890388975226 + 6297147216121499 \a - 
      10590487560881967 \a^2 + 
\\ & 
4980152420171024 \a^3) \zeta^2/
  20922636213327925 + (4532417943052961 + 6683187696077234 \a - 
\\ &
      9628594457667602 \a^2 + 4438295075710969 \a^3) \zeta^3/
  20922636213327925 \\ \hline
\end{array}
$$
$$
\begin{array}{|c|l|} \hline
k & S_{3,k} \qquad \text{for $6_1$} \\ \hline
1 & (-2772972 - 2244430 \a + 2833463 \a^2 - 1140832 \a^3)/33949186 \\ \hline
2 & (-32774690022 - 17111505319 \a + 26321905652 \a^2 - 
 10527251164 \a^3)/114205061704 \\ \hline
3 & (-1598504997001206261 - 909085206892628307 \a + 
    1322686345008572948 \a^2 - 
\\ & 
540917115639525443 \a^3)/
  2387735578783745874 + (-340987970089137309 - 
\\ &      593382515118577161 \a + 540555632185247860 \a^2 - 
\\ &
      224218760661580090 \a^3) \zeta/2387735578783745874 \\ \hline
4 & (-76552043703957527182 - 43852642902836424033 \a + 
    62680976630422417186 \a^2 - 
\\ &
25715623922859379240 \a^3)/
  64428635165146026512 - 3 (40174169918962174465 + 
\\ &
      74704051006678295591 \a - 69345923760309927344 \a^2 + 
\\ &
      27498092656227102870 \a^3) \zeta/257714540660584106048 \\ \hline
5 & (-83535030268880547833711035882206548 - 
    57630500922078935770505946948391332 \a + 
\\ &
    79407230942955753160123180497661371 \a^2 - 
\\ &   30884930225756906416238335746759432 \a^3)/
  45001389388648840291462688518018250 + 
\\ & (-16829652415100927830509785657370971 - 
      36833773286121610403780003873363084 \a + 
\\ &
      34528306662808298407142994334953867 \a^2 - 
\\ &      13138908536274467739177531307979864 \a^3) \zeta/
  45001389388648840291462688518018250 + 
\\ & (1887348067005309217790728173337463 + 
      8201430020783561588938132296311242 \a - 
\\ &      5410900283083316911728968138281266 \a^2 + 
      2484658725916747927696004662854207 \a^3) \zeta^2/
\\ &  45001389388648840291462688518018250 + 
(14884459939384051281789536921780086 + 
\\ &      35638731679608054276763036584609939 \a - 
      30236970444383524674457588584916012 \a^2 + 
\\ &      12454010292496128569264912949291219 \a^3) \zeta^3/
  45001389388648840291462688518018250 \\ \hline
\end{array}
$$
}

\subsection{The $(-2,3,-3)$ and the $(-2,3,9)$ partner pretzel knots}
\label{sub.239}

The $(-2,3,9)$ and the mirror of the $(-2,3,-3)$ pretzel knots (the latter
is also known as the $8_{20}$ knot) are partners. They can both be assembled 
from the same set of ideal tetrahedra. It follows that they have equal
volume $4.1249\dots$ and equal elements of the Bloch group. 
They also have equal trace fields $F_{(-2,3,-3)}=
F_{(-2,3,9)}=\BQ(\a)$ where $\a=0.4425\dots - 0.4544\dots i$ is a root
of
$$
x^5 - x^4 + x^3 + 2 x^2 - 2 x + 1 =0 
$$
This field is of type $[1,2]$ with discriminant $2^3 \cdot 733$.
The 1-loop invariant at $k=1$ and its norm is given by

\be
\label{eq.1loop239}
\begin{array}{|c|l|l|} \hline
\text{knot} & \tau_1^{-2} & N(\tau_1^{-2}) \\ \hline
(-2,3,-3) &  -10 \a^4 + 8 \a^3 - 7 \a^2 - 22 \a + 13 & - 2^4 \cdot 733 
\\ \hline
(-2,3,9) & -4 \a^4 + 10 \a^3 - 10 \a^2 + 2 \a + 14 & -2^7 \cdot 733 
\\ \hline
\end{array} 
\ee

The norm of the 1-loop of the $(-2,3,-3)$ and $(-2,3,9)$ pretzel knots
at level $k$ is given in~\eqref{t.233norm} and~\eqref{t.239norm}
respectively.
{\tiny
\be
\label{t.233norm}
\begin{array}{|c|l|} \hline
k & N(\tau_k/\tau_1) \qquad \text{for $(-2,3,-3)$ pretzel} \\ \hline
1 & 1 \\ \hline
2 & 9\cdot \sqrt{2} \\ \hline
3 & 86677 \\ \hline
4 & 2^2 \cdot 389 \cdot 829 \\ \hline
5 & 251 \cdot 3701 \cdot 5641 \cdot 9573881 \\ \hline
6 & 2 \cdot 3^2 \cdot 73 \cdot 1675763533 \\ \hline
7 & 21059216779259 \cdot 15637926099144015661 \\ \hline
8 & 2^8 \cdot 1677121 \cdot 2821611376969577 \\ \hline
9 & 37 \cdot 288361 \cdot 16887730311458362485922054098785491 \\ \hline
10 & 2^2 \cdot 6451 \cdot 765151 \cdot 2036899317566108665824611 \\ \hline
11 & 572683 \cdot 15481222769 \cdot 123058773843133908627743611 \cdot 
     40590314050385646643724337053081 \\ \hline
12 & 2^4 \cdot 7^2 \cdot 11701 \cdot 570178703041 \cdot 76017401206533083977 
\\ \hline
13 & 3^3 \cdot 3121 \cdot 8581 \cdot \\ &
5208780692011162885806751823435154606807560938151916182486066554111775765097437387670769 \\ \hline
14 & 2^3 \cdot 29 \cdot 883 \cdot 95890797076684070930617 \cdot 
     196704656196706336391779227757264369 \\ \hline
15 & 2731 \cdot 84871 \cdot 517081 \cdot 73175750117941351 \cdot 
2791635002919906087031 \cdot 
     49318837138663878429931849195141 \\ \hline
16 & 2^{16} \cdot 337 \cdot 54673 \cdot 55181281 \cdot 16869371249354588848817 
\cdot 
     2300418425808890616155725510116534231121
\\ \hline
\end{array}
\ee
}

{\tiny
\be
\label{t.239norm}
\begin{array}{|c|l|} \hline
k & N(\tau_k/\tau_1) \qquad \text{for $(-2,3,9)$ pretzel} \\ \hline
1 & 1 \\ \hline
2 & 2^2 \\ \hline
3 & 18217 \\ \hline
4 & 2^5 \cdot 3^2 \cdot 29 \cdot 101 \\ \hline
5 & 31 \cdot 28541 \cdot 1731399041 \\ \hline
6 & 2^4 \cdot 31 \cdot 11059 \cdot 171043 \\ \hline
7 & 43 \cdot 197^2 \cdot 218454864083787040860053 \\ \hline
8 & 2^14 \cdot 241 \cdot 3361 \cdot 1003193 \cdot 15946313 \\ \hline
9 & 17594311532167603 \cdot 50545284538200619535209 \\ \hline
10 & 2^8 \cdot 31^3 \cdot 101 \cdot 74098513175515361398321 \\ \hline
11 & 23^2 \cdot 1917210263 \cdot 869440556615693617955386097 \cdot 
     3945088199088552145275994613 \\ \hline
12 & 2^10 \cdot 3^2 \cdot 61 \cdot 73 \cdot 229 \cdot 
483950196831635581064375269 \\ \hline
13 & 394369 \cdot 1817999 \cdot 910838088184909 \cdot \\ &
     1305464531078495668738541122232633072902748130522596602480377557 \\ \hline
14 & 2^12 \cdot 23031231430410673 \cdot 6254905477428365514627650788018899766459 \\ \hline
15 & 61 \cdot 96331 \cdot 4470070545691 \cdot 
     144159618930221245901711143825031366430385501494972724861589731 \\ \hline
16 & 2^28 \cdot 7^2 \cdot 764993 \cdot 13057776577 \cdot 
     3859919412481173535559894253284600362010320636824481
\\ \hline
\end{array}
\ee
}

Next, we give some sample computations of the 
factorization~\eqref{eq.phi0factor}. 

\noindent 
For $k=2$ 
we have for $(-2,3,-3)$
{\small
\begin{align*}
\varepsilon_{2} &= \a^3
\\
\beta_{3} &= \wp_{2}^{1/2} \cdot \wp_{3^2}  \\
\wp_{2} &= -\a^4 + \a^3 - \a^2 - 2 \a + 1
\\
\wp_{3^2} &= \a^3 + \a + 1
\end{align*}
}
and for $(-2,3,9)$, respectively:
{\small
\begin{align*}
\varepsilon_{2} &= \a^4 - \a^2 + \a
\\
\beta_{2} &= \wp_{2^3}^{1/2} \cdot \wp_{2}^{1/2}  \\
\wp_{2^3} &= \a^4 - \a^3 + 3 \a - 2
\\
\wp_{2} &= -\a^4 + \a^3 - \a^2 - 2 \a + 1
\end{align*}
}

\noindent 
For $k=3$ and $\zeta=\zeta_3$
we have for $(-2,3,-3)$
{\small
\begin{align*}
\varepsilon_3 &= (\a^4 + \a^2 + 2 \a + 1) \z + \a^4 + 3 \a - 1
\\
\beta_3 &= \wp_{86677}  \\
\wp_{86677} &= (3 \a^4 - \a^3 + 8 \a - 1) \z + \a^4 + 3 \a
\end{align*}
}
and for $(-2,3,9)$, respectively:
{\small
\begin{align*}
\varepsilon_3 &= 
(\a^4 + \a^2 + 3 \a) \z + \a^4 - \a^3 + \a^2 + 2 \a - 3
\\
\beta_3 &= \wp_{18217}  \\
\wp_{18217} &= (\a^2 + \a + 2) \z + \a^2 + 2
\end{align*}
}

\noindent 
For $k=4$ and $\zeta=\zeta_4$ we have for $(-2,3,-3)$
{\small
\begin{align*}
\varepsilon_{4} &= 
(97/2 \a^4 - 36 \a^3 + 55/2 \a^2 + 211/2 \a - 151/2) \z - 
\\ & \quad\,\,
37/2 \a^4 - 13 \a^3 - 15/2 \a^2 - 107/2 \a - 75/2
\\
\beta_{4} &= \wp_{2}^2 \cdot \wp_{389} \cdot \wp_{829} \\
\wp_{2} &=
(-1/2 \a^4 - 1/2 \a^2 - 1/2 \a + 1/2) \z - 1/2 \a^4 - 1/2 \a^2 - 3/2 \a + 1/2
\\
\wp_{389} &=
(-\a^4 + \a^3 - 3 \a + 2) \z - \a^4 + \a^3 - \a^2 - 3 \a + 2
\\
\wp_{829} &=
(-3/2 \a^4 + \a^3 - 3/2 \a^2 - 7/2 \a + 3/2) \z - 1/2 \a^4 + 1/2 \a^2 - 3/2 \a + 3/2
\end{align*}
}
and for $(-2,3,9)$, respectively:
{\small
\begin{align*}
\varepsilon_{4} &= 
(-43/2 \a^4 + 40 \a^3 - 105/2 \a^2 + 29/2 \a + 31/2) \z + 
\\ & \quad\,\,
5/2 \a^4 + 22 \a^3 - 73/2 \a^2 + 127/2 \a - 35/2
\\
\beta_{4} &= \wp_{2^3} \cdot \wp_{2}^2 \cdot \wp_{3^2}
\cdot \wp_{29} \cdot \wp_{101} \\
\wp_{2^3} &=
(-1/2 \a^4 - 1/2 \a^2 - 3/2 \a + 1/2) \z + 1/2 \a^4 + 1/2 \a^2 + 3/2 \a - 1/2
\\
\wp_{2} &=
(-1/2 \a^4 - 1/2 \a^2 - 1/2 \a + 1/2) \z - 1/2 \a^4 - 1/2 \a^2 - 3/2 \a + 1/2
\\
\wp_{3^2} &=
(-1/2 \a^4 - 1/2 \a^2 - 3/2 \a - 1/2) \z - 1/2 \a^4 - 1/2 \a^2 - 1/2 \a + 1/2
\\
\wp_{29} &= (\a^4 + \a^2 + 2 \a) \z - \a
\\
\wp_{101} &= (\a^4 - \a^3 + 2 \a - 2) \z + \a^4 - \a^3 + \a^2 + 2 \a - 2
\end{align*}
}

\noindent 
For $k=5$ and $\zeta=\zeta_5$ we have for $(-2,3,-3)$
{\small
\begin{align*}
\varepsilon_{5} &= 
(6 \a^4 + 5 \a^3 + 2 \a^2 - 2 \a - 9) \z^3 + (-13 \a^4 + 30 \a^3 + 9 \a^2 - 49 \a + 7) \z^2 + 
\\ & \quad\,\,
(-32 \a^4 + 30 \a^3 + 11 \a^2 - 66 \a + 23) \z - 31 \a^4 + 19 \a^3 + 10 \a^2 - 48 \a + 25
\\
\beta_{5} &= \wp_{251} \cdot \wp_{3701} \cdot \wp_{5641}
\cdot \wp_{9573881} \\
\wp_{251} &=
(\a^4 - \a^3 + \a^2 + 2 \a - 1) \z^2 + \a \z
\\
\wp_{3701} &=
(-\a^4 - 2 \a) \z^3 + (\a^4 - \a^3 + \a^2 + 2 \a - 2) \z^2 + \a
\\
\wp_{5641} &=
(\a^3 + \a + 1) \z^3 + (\a^4 + 3 \a - 1) \z^2 + (\a^4 + 3 \a - 1) \z + \a^4 + 3 \a - 1
\\
\wp_{9573881} &=
\z^3 + (\a^3 + 2) \z^2 + (-\a^4 + \a^3 - 2 \a) \z - \a^4 + \a^3 - 2 \a + 2
\end{align*}
}
and for $(-2,3,9)$, respectively:
{\small
\begin{align*}
\varepsilon_{5} &= 
(222 \a^4 - 1294 \a^3 - 585 \a^2 + 990 \a - 662) \z^3 + 
\\ & \quad\,\,
(1179 \a^4 - 1708 \a^3 - 1460 \a^2 + 1910 \a - 994) \z^2 + 
\\ & \quad\,\,
(1551 \a^4 - 677 \a^3 - 1420 \a^2 + 1495 \a - 541) \z + 
\\ & \quad\,\,
819 \a^4 + 379 \a^3 - 515 \a^2 + 312 \a + 74
\\
\beta_{5} &= \wp_{31} \cdot \wp_{28541} \cdot \wp_{1731399041} \\
\wp_{31} &=
(-\a^4 + \a^3 - \a^2 - 2 \a + 2) \z^3 + (-\a^4 + \a^3 - \a^2 - 2 \a + 2) \z^2 + 
\\ & \quad\,\,
(-\a^4 + \a^3 - \a^2 - 2 \a + 1) \z - \a^4 + \a^3 - \a^2 - 2 \a + 2
\\
\wp_{28541} &=
(\a^4 - \a^3 + \a^2 + \a - 2) \z^3 + (\a^4 + 2 \a) \z + 2 \a^4 - \a^3 + \a^2 + 5 \a - 2
\\
\wp_{1731399041} &=
-\a^2 \z^3 + (2 \a^4 - \a^2 + 3 \a) \z^2 + (3 \a^4 + \a^3 - \a^2 + 4 \a) \z + 3 \a^3 - \a^2 - \a + 3
\end{align*}
}

\noindent 
For $k=6$ and $\zeta=\zeta_6)$ we have for $(-2,3,-3)$
{\small
\begin{align*}
\varepsilon_{6} &= 
(20 \a^4 - 2 \a^3 + 29 \a^2 + 21 \a + 35) \z - 71 \a^4 + 89 \a^3 - 144 \a^2 - 9 \a + 5
\\
\beta_{6} &= \wp_{2^2}^{1/2} \cdot \wp_{3^2} 
\cdot \wp_{73} \cdot \wp_{1675763533} \\
\wp_{2^2} &= (-\a^4 + \a^3 - \a^2 - 2 \a + 1) \z
\\
\wp_{3^2} &= \z + \a^4 - \a^3 + 2 \a - 2
\\
\wp_{73} &=
(\a^4 - \a^3 + 2 \a - 1) \z + \a^4 + 2 \a - 1
\\
\wp_{1675763533} &=
(-5 \a^4 + \a^3 + 2 \a^2 - 10 \a + 3) \z + 5 \a^4 + \a^3 + \a^2 + 14 \a + 2
\end{align*}
}
and for $(-2,3,9)$, respectively:
{\small
\begin{align*}
\varepsilon_{6} &= 
(-30 \a^4 + 4 \a^3 + 27 \a^2 - 21 \a + 5) \z + 11 \a^4 + 34 \a^3 + \a^2 - 17 \a + 14
\\
\beta_{6} &= \wp_{2^6}^{1/2} \cdot \wp_{2^2}^{1/2} \cdot \wp_{31}
\cdot \wp_{11059} \cdot \wp_{171043} \\
\wp_{2^6} &= (\a^4 - \a^3 + 3 \a - 2) \z - \a^4 + \a^3 - 3 \a + 2
\\
\wp_{2^2} &= (-\a^4 + \a^3 - \a^2 - 2 \a + 1) \z
\\
\wp_{31} &= (\a + 1) \z - \a
\\
\wp_{11059} &= (-\a + 2) \z + \a^3 - \a^2 + 1
\\
\wp_{171043} &=
(-2 \a^4 + 2 \a^3 - 2 \a^2 - 3 \a + 2) \z + 5 \a^4 - 4 \a^3 + 3 \a^2 + 9 \a - 6
\end{align*}
}

\subsection{The $9_{12}$ knot}
\label{sub.912}

The $9_{12}$ knot has volume $8.3664\dots$ with $10$ ideal tetrahedra and trace
field $F_{9_12}=\BQ(\a)$ where 
$\a=-0.06265158\dots + i \,1.24990458\dots$ is a root of 
\begin{align*}
&x^{17} - 8 x^{16} + 32 x^{15} - 89 x^{14} + 195 x^{13} - 353 x^{12} + 542 x^{11} - 
719 x^{10} + 834 x^9  \\
& \qquad - 851 x^8 + 764 x^7 - 605 x^6+ 421 x^5 - 
253 x^4 + 130 x^3 - 55 x^2 + 18 x - 3=0
\end{align*}
$F_{9_{12}}$ is of type $[1,8]$ with discriminant $3 \cdot 298171 \cdot 5210119
\cdot 156953399$. We chose this final example because of the complexity of
the ideal triangulation, and the complexity of its trace field.
The 1-loop invariant at $k=1$ and its norm is given by

\be
\label{eq.1loop912}
\begin{array}{|c|l|l|} \hline
\text{knot} & \tau_1^{-2} & N(\tau_1^{-2}) \\ \hline
9_{12} &  59 \a^{16} + 40 \a^{15} - 14 \a^{14} + 12 \a^{13} - 164 \a^{12}
- 82 \a^{11} + & 
 3 \cdot 298171 \cdot 5210119 \cdot 
 \\ &
107 \a^{10} - 186 \a^9 - 55 \a^8 + 356 \a^7 - 387 \a^6 
- 410 \a^5 + & 156953399 \\ & 342 \a^4 + 207 \a^3 - 117 \a^2 - 68 \a - 13 &
\\ \hline
\end{array} 
\ee
The norm of the 1-loop of $9_{12}$ at level $k$ is given in~\eqref{t.912norm}.
{\tiny
\be
\label{t.912norm}
\begin{array}{|c|l|} \hline
k & N(\tau_k/\tau_1) \\ \hline
1 & 1 \\ \hline
2 & 175013 \cdot 139320586381  \\ \hline
3 & 2^6 \cdot 3 \cdot 317089 \cdot 618610957 \cdot 16597704247 
\cdot 17781027987117308670607579
\\ \hline
4 & 89^2 \cdot 193 \cdot 
    113664060425758850100362844843557553491441831726215669353830969 
\\ \hline
\end{array}
\ee
}

\noindent 
For $k=2$ and $\zeta=e(1/2)$ we have
{\small
\begin{align*}
\varepsilon_{2} &= 
4 \a^{16} - 2 \a^{13} - 8 \a^{12} - 2 \a^{11} + 8 \a^{10} - 13 \a^9 + 4 \a^8 + 16 \a^7 - 24 \a^6 - 20 \a^5 +
\\ & \quad\,\,
28 \a^4 + 12 \a^3 - 16 \a^2 - 4 \a + 4
\\
\beta_{2} &= \wp_{175013} \cdot \wp_{139320586381} \\
\wp_{175013} &=
-9 \a^{16} + 12 \a^{15} - 2 \a^{14} + 5 \a^{13} + 15 \a^{12} - 23 \a^{11} 
- 17 \a^{10} + 50 \a^9 - 57 \a^8 - 
\\ & \quad\,\,
8 \a^7 + 98 \a^6 - 49 \a^5 - 82 \a^4 + 52 \a^3 + 34 \a^2 - 17 \a - 8
\\
\wp_{139320586381} &=
26 \a^{16} - 5 \a^{15} - 4 \a^{14} - 11 \a^{13} - 53 \a^{12} + 2 \a^{11} 
+ 61 \a^{10} - 97 \a^9 + 42 \a^8 + 
\\ & \quad\,\,
110 \a^7 - 193 \a^6 - 94 \a^5 + 225 \a^4 + 45 \a^3 - 124 \a^2 - 12 \a + 32
\end{align*}
}





\section*{Acknowledgements}

The work of SG is supported by NSF grant DMS-14-06419. This paper was primarily completed
while TD was a long-term member at the Institute for the Advanced Study, supported by the Friends of the Institute for Advanced Study, in part by DOE grant DE-FG02-90ER40542, and in part by 
ERC Starting Grant no. 335739 {\em Quantum fields and knot homologies}, 
funded by the European Research Council under the European Union's 
Seventh Framework Programme. TD is currently supported by the Perimeter Institute for Theoretical Physics; research at Perimeter Institute is supported by the Government of Canada through Industry Canada and by the Province of Ontario through the Ministry of Economic Development and Innovation.

The authors wish to thank Don Zagier for many enlightening conversations.
This paper could not have been written without Zagier's guidance,  
encouragement, and his generous sharing of ideas. The authors also thank
Sergei Gukov and Edward Witten for very helpful comments and suggestions.

\bibliographystyle{hamsalpha}
\bibliography{biblio}
\end{document}